\theoremstyle{plain}
    \newtheorem{thm}{Theorem}[section]
    \newtheorem{lem}[thm]   {Lemma}
    \newtheorem{cor}[thm]   {Corollary}
    \newtheorem{prop}[thm]  {Proposition}
        \newtheorem{remark}[thm] {Remark}
        \newtheorem{question}[thm]  {Question}
\theoremstyle{definition}
    \newtheorem{defn}[thm]  {Definition}
    \newtheorem{conj}[thm]{Conjecture}
    \newtheorem{nota}[thm]{Notation}
\newcommand{\cA}{\mathcal{A}} %arc complex
\newcommand{\I}{\mathcal{I}} %an interval on dM
\newcommand{\cJ}{\mathcal{J}} %Johnson's filtration
\newcommand{\M}{\mathcal{M}} %a manifold
\newcommand{\Q}{\mathbb{Q}} %rational numbers
\newcommand{\fp}{\mathfrak{p}} %augmentation
\newcommand{\R}{\mathbb{R}} %real numbers
\newcommand{\fS}{\mathfrak{S}} %symmetric groups
\newcommand{\ft}{\mathfrak{t}} %a tuple
\newcommand{\bfT}{\mathbf{T}} %a torus
\newcommand{\bT}{\bar{T}} %closed T model for open surfaces
\newcommand{\fw}{\mathfrak{w}} %weight
\newcommand{\U}{\mathcal{U}}
\newcommand{\V}{\mathcal{V}}
\newcommand{\cX}{\mathcal{X}}
\newcommand{\Z}{\mathbb{Z}} %integers
\newcommand{\uP}{\underline{P}}
\newcommand{\uU}{\underline{U}}
\newcommand{\uV}{\underline{V}}
\newcommand{\Ch}{\mathrm{Ch}} %a chain complex
\newcommand{\tCh}{\tilde\Ch} %a reduced chain complex
\newcommand{\del}{\partial}
\newcommand{\sing}{\mathrm{sing}}
\newcommand{\cell}{\mathrm{cell}}
\newcommand{\PL}{\mathrm{PL}} %Poincare-Lefschetz duality quasi-isomorphism
\newcommand{\tPhi}{\tilde\Phi}
\newcommand{\PBr}{\mathrm{PBr}} %pure braid group
\newcommand{\hPBr}{\widehat{\PBr}} %fat pure braid group
\newcommand{\pa}[1]{\left(#1\right)}
\newcommand{\sca}[1]{\left<#1\right>}
\newcommand{\set}[1]{\left\{#1\right\}}
\renewcommand{\phi}{\varphi}
\renewcommand{\epsilon}{\varepsilon}
\DeclareMathOperator{\Mor}{Mor} %Moriyama group
\DeclareMathOperator{\Diff}{Diff} %group of diffeomorphisms
\DeclareMathOperator{\Homeo}{Homeo} %group of homeomorphisms
\DeclareMathOperator{\Id}{Id} %identity
\DeclareMathOperator{\Hom}{Hom} %homomorphisms
\DeclareMathOperator{\Arc}{Arc} %arc complex
\DeclareMathOperator{\Emb}{Emb} %space of embeddings
\DeclareMathOperator{\Ind}{Ind} %induction
\providecommand{\Tor}{\mathrm{Tor}}
\begin{document}

\title[Mapping class group actions on configuration spaces]{Mapping class group actions on configuration spaces and the Johnson filtration}  

\author{Andrea Bianchi}
% \thanks{Andrea Bianchi was supported by the Danish National Research Foundation through the Centre for Geometry and Topology (DNRF151) and the European Research Council under the European Union’s Horizon 2020 research and innovation programme (grant agreement No. 772960)}
\email{anbi@math.ku.dk}
\address{Department of Mathematical Sciences, University of Copenhagen \newline
Universitetsparken 5, Copenhagen, 2100, Denmark}  

 \author{Jeremy Miller}
%  \thanks{Jeremy Miller was supported in part by NSF grant DMS-1709726 and a Simons Foundation Collaboration Grants for Mathematicians}
  \email{jeremykmiller@purdue.edu}
\address{Purdue University, Department of Mathematics\newline
150 North University, West Lafayette IN, 47907, USA}

\author{Jennifer C. H. Wilson}
\email{jchw@umich.edu}
\address{University of Michigan, Department of Mathematics \newline
530 Church St, Ann Arbor MI, 48109, USA}
% \thanks{Jennifer Wilson was supported in part by NSF grant DMS-1906123}

\subjclass[2020]{
55R80 %Discriminantal varieties and configuration spaces in algebraic topology
57K20 %2-dimensional topology (including mapping class groups of surfaces, Teichm¨uller theory, curve complexes, etc.)
}

\date{\today}

\begin{abstract}
Let $F_n(\Sigma_{g,1})$ denote the configuration space of $n$ ordered points on the surface $\Sigma_{g,1}$ and let $\Gamma_{g,1}$ denote the mapping class group of $\Sigma_{g,1}$. We prove that the action of $\Gamma_{g,1}$ on $H_i(F_n(\Sigma_{g,1});\mathbb{Z})$ is trivial when restricted to the $i$\textsuperscript{th} stage of the Johnson  filtration $\mathcal{J}(i)\subset \Gamma_{g,1}$. We give examples showing that $\mathcal{J}(2)$ acts nontrivially on $H_3(F_3(\Sigma_{g,1}))$ for $g\ge2$, and provide two new conceptual reinterpretations of a certain group introduced by Moriyama.
\end{abstract}

\maketitle

\section{Introduction}

\subsection{Statement of results}
In this paper we study the action of mapping class groups on the homology of configuration spaces.  Given a topological space $Z$, let
\[
 F_n(Z)=\set{(z_1,\dots,z_n)\in Z^n\,|\, z_i\neq z_j\ \text{ for } i\neq j}
\]
denote the configuration space $n$ distinct ordered points in $Z$.

Let $\M=\Sigma_{g,1}$ denote a smooth, compact, connected, oriented surface of genus $g$ with one boundary component $\del\M$.
Let $\Homeo(\M,\del\M)$ denote the topological group of homeomorphism of $\M$ that fix $\del \M$ pointwise. Note that a homeomorphism fixing the boundary is automatically orientation-preserving.
The action of $\Homeo(\M,\del\M)$ on $ F_n(\M)$ factors through an action of the mapping class group \[
\Gamma(\M,\del\M)\colon = \pi_0\Homeo(\M,\del\M).
 \] We investigate the following question.

\begin{question}
What is the kernel of the action of $\Gamma(\M,\del\M)$ on $H_i(F_n(\M))$?
\end{question}

Let $\cJ(i)\subset\Gamma(\M,\del\M)$ denote the $i$\textsuperscript{th} stage of the Johnson filtration. That is, $\cJ(i)$ is the kernel of the action of $\Gamma(\M,\del\M)$ on $\pi_1(\M)/\gamma_i\pi_1(\M)$, where $\gamma_i\pi_1(\M)$ is the $i$\textsuperscript{th} term in the lower central series of $\pi_1(\M)$. Our main theorem is the following.

\begin{thm}
 \label{thm:main}
Let $\M$ be a compact orientable surface with one boundary component and let $n,i \geq 0$.
Then the $i$\textsuperscript{th} stage of the Johnson filtration of the mapping class group,
$\cJ(i)\subset\Gamma(\M,\del\M)$, acts trivially on $H_i(F_n(\M))$ and $H^i(F_n(\M))$, the $i$\textsuperscript{th}
homology and cohomology of the ordered configuration space of $n$ points in $\M$.
\end{thm}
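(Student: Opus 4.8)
The plan is to argue by induction on the homological degree $i$, and within each degree by a secondary induction on the number of points $n$, peeling off one point at a time with the Fadell--Neuwirth fibration while keeping track of the $\Gamma(\M,\del\M)$-action. Since $\M$ has nonempty boundary, $F_n(\M)$ is aspherical and $H_*(F_n(\M);\Z)$ is finitely generated and torsion free, so the assertion about cohomology will follow from the one about homology by the universal coefficient theorem, and I concentrate on homology. The base cases are immediate: $F_0(\M)$ is a point, $H_0(F_n(\M))=\Z$ carries the trivial action, and in degree $i=1$ one checks directly that $H_1(F_n(\M))$ agrees with $H_1(\M)^{\oplus n}$ up to $\Gamma(\M,\del\M)$-trivial summands spanned by canonical braiding classes, so that $\cJ(1)$, the Torelli group, acts trivially.

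For the inductive step I would use the Fadell--Neuwirth fibration $\rho\colon F_n(\M)\to F_{n-1}(\M)$ forgetting the last point; it is $\Gamma(\M,\del\M)$-equivariant, and its fiber, namely $\M$ with $n-1$ points removed, is homotopy equivalent to a wedge of circles, so the Serre spectral sequence has only the two rows $q=0,1$. Writing $\mathcal H_1$ for the local system on $F_{n-1}(\M)$ whose stalk is the first homology of the fiber, the spectral sequence equips $H_i(F_n(\M))$ with a $\Gamma(\M,\del\M)$-equivariant two-step filtration whose subquotients are a subobject of $H_i(F_{n-1}(\M))$ and a quotient of $H_{i-1}(F_{n-1}(\M);\mathcal H_1)$; on the first of these $\cJ(i)$ acts trivially by the secondary induction on $n$. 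To analyze the second I would use the short exact sequence of local systems $0\to\underline{\Z}^{\,n-1}\to\mathcal H_1\to\underline{H_1(\M)}\to 0$, whose outer terms are trivial and spanned respectively by the loops around the $n-1$ remembered points and by $H_1(\M)$, and whose extension class is the $\Gamma(\M,\del\M)$-invariant symplectic linking class. Combining the induced long exact sequence with the primary induction hypothesis in homological degrees $<i$ then produces a finite $\Gamma(\M,\del\M)$-equivariant filtration of $H_i(F_n(\M))$ on whose associated graded $\cJ(i)$ acts trivially.

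The main obstacle is to resolve the resulting extension problems: triviality on associated gradeds only guarantees that $\cJ(i)$ acts by unipotent automorphisms, and one must show that the associated crossed homomorphisms --- one for each extension in the filtration above, assembled along the full iterated Fadell--Neuwirth tower --- vanish on $\cJ(i)$. I expect these crossed homomorphisms to be of Johnson type: unwinding the construction, each should be built from the action of $\Gamma(\M,\del\M)$ on a nilpotent quotient $\pi_1(\M)/\gamma_j\pi_1(\M)$ with $j\le i$ together with the $\Gamma(\M,\del\M)$-trivial peripheral data, so that it factors through $\Gamma(\M,\del\M)/\cJ(i)$ and hence is killed by $\cJ(i)$. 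Concretely, I expect this to be achieved by constructing a $\Gamma(\M,\del\M)$-equivariant chain complex computing $H_*(F_n(\M))$ whose chain groups in degrees $\le i$ depend on $\pi_1(\M)$ only through $\pi_1(\M)/\gamma_i\pi_1(\M)$, forcing $\cJ(i)$ to act trivially on those chain groups and therefore on $H_i(F_n(\M))$ and, by universal coefficients, on $H^i(F_n(\M))$. This is also where the bound is sharp: in homological degree $i+1$ the tower has just enough depth for the Johnson homomorphism $\tau_i$ to appear as a genuine obstruction, which is what accounts for the nontrivial action of $\cJ(2)$ on $H_3(F_3(\Sigma_{g,1}))$ for $g\ge2$.
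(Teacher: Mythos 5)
Your structural setup is sound as far as it goes: the Fadell--Neuwirth fibration is $\Gamma(\M,\del\M)$-equivariant, the Serre spectral sequence has two rows, the sub- and quotient local systems you identify are indeed trivial, and a double induction does show that $\cJ(i)$ acts \emph{unipotently} on $H_i(F_n(\mathring{\M}))$ with respect to a finite equivariant filtration. But the proof has a genuine gap exactly where you flag ``the main obstacle'': the vanishing on $\cJ(i)$ of the crossed homomorphisms recording the extensions is asserted (``I expect these crossed homomorphisms to be of Johnson type \dots with $j\le i$''), not proved, and this is where the entire content of the theorem lives. The unipotence alone cannot suffice: the paper's own examples show that $\cJ(1)$ acts nontrivially on $H_2(F_2(\mathring{\M}))$ and $\cJ(2)$ acts nontrivially on $H_3(F_3(\mathring{\M}))$, so the analogous crossed homomorphisms genuinely fail to vanish on $\cJ(i-1)$; pinning down that they vanish precisely on $\cJ(i)$ --- i.e.\ that the homological degree matches the Johnson index --- is the delicate point, and your final paragraph (``constructing a chain complex whose chain groups in degrees $\le i$ depend only on $\pi_1(\M)/\gamma_i\pi_1(\M)$'') restates the desired conclusion rather than establishing it.

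For comparison, the paper does not run an induction at all. It builds a Fox--Neuwirth-type relative cell structure on the one-point compactification $F_n(\mathring{\M})^\infty$, shows (after composing with a collapse map $\tau$ homotopic to the identity) that mapping classes act cellularly, and identifies the cellular chain complex equivariantly as $\bigoplus_{S=Q\sqcup R}\tCh^E_*(F_Q(\M)^\infty)\otimes\Mor_R$, where $\Gamma$ acts trivially on the disc-configuration factors and via Moriyama's representation on $\Mor_R$. The key external input is Moriyama's theorem that $\cJ(|R|)$ acts trivially on $\Mor_R$; since the summands with $|R|\le i$ exhaust the chain complex in degrees $\ge 2n-i$, one gets triviality of $\cJ(i)$ at the \emph{chain} level in the relevant range, and Poincar\'e--Lefschetz duality converts this into the statement for $H^i$ and $H_i$. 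In other words, the paper avoids your extension problem entirely by achieving chain-level triviality, at the cost of importing Moriyama's theorem. To complete your argument you would need either to prove the vanishing of your crossed homomorphisms on $\cJ(i)$ directly (which amounts to re-proving a statement of Moriyama's type) or to cite such a result as input; as written, the inductive scheme does not close.
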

We remark that if $\mathring{\M}\subset\M$ denotes the interior of $\M$, then the natural map $F_n(\mathring{\M}) \to F_n(\M)$  is a homotopy equivalence. For technical reasons it will be convenient to work with the space $F_n(\mathring{\M})$, so we shall focus on this space in the rest of the paper.

Theorem \ref{thm:main} is sharp for small values of $i$ in the sense that there are examples of mapping classes in $\cJ(i-1)$ acting nontrivially on $H_i(F_n(\mathring{\M}))$. Basically by definition the kernel of the action of $\Gamma(\M,\del\M)$ on $H_1(F_1(\mathring{\M})) \cong H_1(\M)$ is the Torelli group $\cJ(1)$. In particular, $\cJ(0)$ does not act trivially on $H_1(\M)$ for $g \geq 1$. The first author \cite[Page 32]{Bianchi} showed that $\cJ(1)$ does not act trivially on $H_2(F_2(\mathring{\M}))$ for $g \geq 2$. A similar result for configuration spaces of closed
surfaces was obtained in \cite{Looijenga}.
In Propositions \ref{H3F3} and \ref{H3F3g2}, we show that $\cJ(2)$ does not act trivially on $H_3(F_3(\mathring{\M}))$ for $g \geq 2$.

We note that $\cJ(i)$ does not coincide with the entire kernel of the action of $\Gamma(\M,\del\M)$ on $H_i(F_n(\mathring{\M}))$ in general. For example, when $n>i$, $H_i(F_n(\mathring{\M})) \cong 0$ and so the kernel of the action of $\Gamma(\M,\del\M)$ on $H_i(F_n(\mathring{\M}))$ is all of $\Gamma(\M,\del\M)=\cJ(0)$. Even when $H_i(F_n(\mathring{\M}))$ is nonzero, the kernel of the action of $\Gamma(\M,\del\M)$ on $H_i(F_n(\mathring{\M}))$ is generally larger than $\cJ(i)$. For example, a Dehn twist around a curve parallel to $\del\M$ acts trivially on $H_i(F_n(\mathring{\M}))$ but is not in $\cJ(i)$ for $i>2$; see Subsection \ref{subsec:bDehnJ2}. We conjecture (Conjecture \ref{6.3}) that the subgroup generated by $\cJ(i)$ and this Dehn twist is exactly the kernel of the action of $\Gamma(\M,\del\M)$ on $H_i(F_n(\mathring{\M}))$ for $i \leq n$.

Finally, we mention that \cite{Stavrou} has recently shown, for the unordered configuration spaces $C_n(\M):=F_n(\M)/\fS_n$, that $\cJ(2)$ acts trivially on all rational homology groups $H_i(C_n(\M) ;\Q)$.

\subsection{Relationship to the work of Moriyama}
\label{subsec:relationship}

The inspiration for our paper is a related theorem of Moriyama \cite{Moriyama} concerning the following quotient of configuration space.

\begin{defn} \label{defMprime}
Fix $p_0 \in \del\M$, let $\I=\del\M \setminus \{p_0\} \cong (0,1)$, and let $\M'=\mathring{\M} \cup \I$. For $n\ge0$ let $F_{n,1}(\M',\I)$ denote the subspace of $F_{n}(\M')$
containing all configurations for which at least one point lies in $\I$. 
\end{defn}

Moriyama proves that the reduced cohomology
$\widetilde H^*(F_n(\M')/F_{n,1}(\M',\I))$ is concentrated in degree $*=n$, and that
the kernel of the action of $\Gamma(\M,\del\M)$ on the reduced cohomology group
$\widetilde H^n(F_n(\M')/F_{n,1}(\M',\I))$ is precisely $\cJ(n)$. See also Theorem \ref{thm:Moriyama}
for an equivalent statement.

Our Theorem \ref{thm:main} is a version of Moriyama's result for the classical configuration space
$F_n(\mathring{\M})$, which is homotopy equivalent to $F_n(\M')$, but clearly not to the quotient $F_n(\M')/F_{n,1}(\M',\I)$ for $n>1$.
Unfortunately, unlike Moriyama, we are unable to determine the kernel completely.

Our result depends on Moriyama's work.  In Proposition \ref{prop:main}, we describe a chain complex computing the (co)homology of
$F_n(\mathring{\M})$ built out of (co)chains on configuration space of $\R^2$ and Moriyama's groups
$\widetilde H^n(F_n(\M')/F_{n,1}(\M',\I))$.

In Appendix \ref{Appendix}, we give two different conceptual reinterpretations of Moriyama's groups.  In Propositions \ref{prop:main} and Corollary \ref{cor:hypertor}, we explain how Moryiama's groups are dual to the syzygies of chains of configurations of a surface when viewed as a module over chains of configurations of a disc.  In Theorem \ref{thm:Moriyamaarccomplex}, we prove that Moriyama's groups are linear dual to the pure-surface-braid-group-equivariant homology of the spherical arc complexes of Hatcher--Wahl \cite[Proposition 7.2]{hatcherwahl}. These spaces appear in work of the second and third author on higher order representation stability \cite{MW1} and our proof of Theorem \ref{thm:main} was inspired by higher order representation stability.

\subsection{Proof sketch and paper outline}
We prove Theorem \ref{thm:main} by constructing chain complexes computing the (co)homology of
$F_n(\mathring{\M})$ built out of Moriyama's groups and chains on configuration spaces of $\R^2$. Using this, our results follow almost immediately as the mapping class group of a surface acts trivially on configurations supported in a disk. In Section \ref{Sec2}, we review background on the Johnson filtration, Moriyama's work, and actions of different kinds of maps on configuration spaces. In Section \ref{sec3}, we describe a cell structure on configuration spaces in the spirt of Fox--Neuwirth and Fuchs' stratifications \cite{FoxNeuwirth, Fuchs:CohomBraidModtwo} which gives the desired chain complex model. In Section \ref{sec4}, we describe how to homotope homeomorphisms to respect this cell structure so that elements of the mapping class group act on our chain complex. In Section \ref{sec5}, we prove the main theorem. In Section \ref{sec:sharpness}, we describe some examples relating to the sharpness/non-sharpness of the main theorem. In Appendix \ref{Appendix}, we give some alternative descriptions of the groups appearing in Moriyama's work.

\subsection{Acknowledgments} We would like to thank Benson Farb for suggesting this problem to the second author. We would like to thank Manuel Krannich and  Andrew Putman for helpful conversations. We thank the referee for helpful comments.

Andrea Bianchi was supported by the Danish National Research Foundation through the Centre for Geometry and Topology (DNRF151) and the European Research Council under the European Union Horizon 2020 research and innovation programme (grant agreement No. 772960).

Jeremy Miller was supported in part by NSF grant DMS-1709726 and a Simons Foundation Collaboration Grants for Mathematicians.

Jennifer Wilson was supported in part by NSF grant DMS-1906123.

\section{Preliminaries} \label{Sec2}
\subsection{The Johnson filtration}
We fix a basepoint $p_0\in\del\M$ and
abbreviate $\pi=\pi_1(\M,p_0)$ and $\Gamma=\Gamma(\M,\del\M)$.

Recall that $\pi$ is a free group on $2g$ generators, and that it admits a lower central series
\[
 \pi=\gamma_0\pi\supset\gamma_1\pi\supset\gamma_2\pi\supset\dots,
\]
where $\gamma_0\pi$ is defined to be $\pi$, and for $i\ge0$ we define $\gamma_{i+1}\pi=[\pi,\gamma_i\pi]$.
All groups $\gamma_i\pi$ are characteristic subgroups of $\pi$.
\begin{defn}
 \label{defn:Johnsonfiltration}
 For $i\ge0$ we define $\cJ(i)\subset\Gamma$ to be the kernel of the action of $\Gamma$ by automorphisms
 of the lower central quotient $\pi/\gamma_i\pi$.
\end{defn}
For example, $\cJ(0)=\Gamma$, as the quotient $\pi/\gamma_0\pi$ is the trivial group; and $\cJ(1)$
is the Torelli group, i.e. the subgroup of $\Gamma$ acting trivially on the first homology of $\M$.

\subsection{Action of homeomorphisms on configuration spaces}
\label{subsec:actionGamma}
The topological group $\Homeo(\M,\del\M)$ acts continuously on the following spaces
\begin{itemize}
 \item on $\M$, tautologically;
 \item on $\M^n$, by taking the $n$-fold diagonal action of the previous example;
 \item on $F_n(\mathring{\M})$, by restriction of the previous example.
\end{itemize}
Note that the action is by homeomorphisms in all cases (indeed a group can only act by homeomorphisms on a space!).
Homotopic homeomorphisms of $\M$ give rise to homotopic homeomorphisms of $F_n(\mathring{\M})$. In particular
the action of $\Homeo(\M,\del\M)$ on the homology and cohomology groups of $F_n(\mathring{\M})$ factors through
an action of the mapping class group $\Gamma$ on $H_*(F_n(\mathring{\M}))$ and $H^*(F_n(\mathring{\M}))$ respectively.
In the entire article we refer to this action.

\subsection{Naturality of quasi-isomorphisms}
\label{subsec:naturalquasiiso}
We recollect some basic facts about Poincare-Lefschetz duality, equivalence of cellular and singular
homology, and naturality with respect to maps of topological spaces.

Let $\cX$ be an oriented, possibly non-compact manifold of dimension $N\ge0$ without boundary.
We will denote by $\cX^\infty$
the one-point compactification of $X$, with basepoint the point at infinity $\infty$. If $\cX$ is compact, then $\cX^\infty=\cX\sqcup\set{\infty}$.

Poincare-Lefschetz
duality can be expressed as an isomorphism of singular ho\-mo\-lo\-gy/\-co\-homo\-logy groups
\[
 \PL\colon H^{N-*}_{\sing}(\cX) \overset{\cong}{\longrightarrow} H_*^{\sing}(\cX^\infty,\infty)
\]
or as a quasi-isomorphism (canonical up to chain homotopy)
of singular chain and cochain complexes
\[
 \PL\colon \Ch^{N-*}_{\sing}(\cX) \overset{\simeq}{\longrightarrow} \Ch_*^{\sing}(\cX^\infty,\infty).
\]
Suppose now that $\cX^\infty$ is endowed with a cell decomposition, with $\infty$ being a zero-cell, and denote
by $\Ch_*^{\cell}(\cX^\infty,\infty)$ the cellular chain complex of $\cX^\infty$ relative to $\infty$, and by
$H_*^{\cell}(\cX^\infty,\infty)$ the relative cellular homology. The standard comparison between
cellular and singular theories takes the form of an isomorphism of homology groups
\[
 I^{\sing}_{\cell}\colon H_*^{\sing}(\cX^\infty,\infty) \overset{\cong}{\longrightarrow} H_*^{\cell}(\cX^\infty,\infty)
\]
or of a quasi-isomorphism (canonical up to chain homotopy)
of chain complexes
\[
 I^{\sing}_{\cell}\colon \Ch_*^{\sing}(\cX^\infty,\infty) \overset{\simeq}{\longrightarrow} \Ch_*^{\cell}(\cX^\infty,\infty).
\]

If $\phi\colon \cX\to \cX$ is an orientation-preserving homeomorphism, then there is an induced homeomorphism
$\phi^{\infty}\colon \cX^\infty\to \cX^\infty$. Suppose moreover that $\psi\colon \cX^\infty\to \cX^\infty$ is a cellular
approximation of $\phi^\infty$, so in particular $\psi$ and $\phi^\infty$ are homotopic; then the following
diagram of chain complexes is homotopy commutative
\begin{equation}
 \begin{tikzcd}
  \Ch^{N-*}_{\sing}(\cX)\ar[d,"(\phi^{-1})^*"]\ar[r,"\PL"] & \Ch^{\sing}_*(\cX^\infty,\infty)\ar[d,"\phi^\infty_*"]\ar[r,"I^{\sing}_{\cell}"] & \Ch^{\cell}_*(\cX^\infty,\infty) \ar[d,"\psi_*"]\\
  \Ch^{N-*}_{\sing}(\cX) \ar[r,"\PL"] & \Ch^{\sing}_*(\cX^\infty,\infty)\ar[r,"I^{\sing}_{\cell}"] & \Ch^{\cell}_*(\cX^\infty,\infty)  
 \end{tikzcd} \label{DiagramPLD}
\end{equation}
and induces a commutative diagram of homology/cohomology groups.

The upshot of this discussion is that
we can study the behaviour of $\phi\colon \cX\to \cX$ in cohomology by understanding the behaviour
of a cellular approximation $\psi$ of $\phi^\infty$ on the cellular homology of $(\cX^\infty,\infty)$.

We will apply these remarks to $\cX=F_n(\mathring{\M})$, which is a $2n$-manifold \emph{without boundary}: this is essential advantage of taking configurations in $\mathring{\M}$ rather than in $\M$. In particular we can study the action of $\Gamma$ on
the homology groups $H_{2n-i}(F_n(\mathring{\M})^\infty,\infty)$ instead of the action
on the cohomology groups $H^i(F_n(\mathring{\M}))$.

\subsection{Moriyama's results}
Let $p_0\in\del\M$ be a point. Moriyama considers two subspaces $\Delta_n(\M)$ and $A_n(\M)$ of $\M^n$, namely
\[
 \Delta_n(\M)=\set{(z_1,\dots,z_n)\in \M^n\,|\, z_i=z_j\mbox{ for some } i\neq j},
\]
\[
 A_n(\M)=\set{(z_1,\dots,z_n)\in \M^n\,|\, z_i=p_0\mbox{ for some } i}.
\]
He proves then the following theorem.

\begin{thm}[\cite{Moriyama}, Theorem A]
 \label{thm:Moriyama}
 The relative homology group
 \[
 H_i(\M^n,\Delta_n(\M)\cup A_n(\M))
 \]
 vanishes for $i\neq n$, and is free abelian of rank
 $(2g)\cdot(2g+1)\dots(2g+n-1)$ for $i=n$.
 The action of $\Gamma$ on $H_n(\M^n,\Delta_n(\M)\cup A_n(\M))$ has kernel precisely the subgroup $\cJ(n)\subset\Gamma$.
\end{thm}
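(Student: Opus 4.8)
The plan is to prove both statements by induction on $n$, using the projection $p\colon \M^n=\M^{n-1}\times\M\to\M^{n-1}$ that forgets the last point. The key observation is that $\Delta_n(\M)\cup A_n(\M)$ is controlled fiberwise by $p$: over a configuration $z=(z_1,\dots,z_{n-1})\notin\Delta_{n-1}(\M)\cup A_{n-1}(\M)$ its intersection with the fiber $\{z\}\times\M$ is the finite set $\{z_1,\dots,z_{n-1},p_0\}$, whereas over $\Delta_{n-1}(\M)\cup A_{n-1}(\M)$ the entire fiber lies in $\Delta_n(\M)\cup A_n(\M)$. Writing $E=\M^n$, $E'=\Delta_n(\M)\cup A_n(\M)$, and $B'=\Delta_{n-1}(\M)\cup A_{n-1}(\M)$, one has $p^{-1}(B')=B'\times\M\subseteq E'$, and $E'$ is the union of $p^{-1}(B')$ with the graphs of the $n$ maps $z\mapsto(z,z_i)$ for $1\le i\le n-1$ and $z\mapsto(z,p_0)$; over $\M^{n-1}\setminus B'$ these graphs are pairwise disjoint and disjoint from $p^{-1}(B')$. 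I would then run the long exact sequence of the triple $(E,E',p^{-1}(B'))$.

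For the homology computation: collapsing $p^{-1}(B')$ identifies $E'/p^{-1}(B')$ with a wedge of $n$ copies of $\M^{n-1}/(\Delta_{n-1}(\M)\cup A_{n-1}(\M))$, so $H_*(E',p^{-1}(B'))$ is a direct sum of $n$ copies of $H_*(\M^{n-1},\Delta_{n-1}(\M)\cup A_{n-1}(\M))$, which by the inductive hypothesis is free and concentrated in degree $n-1$ of rank $\prod_{k=0}^{n-2}(2g+k)$. By the Künneth theorem (no $\Tor$ terms, since $H_*(\M)$ is free), $H_*(E,p^{-1}(B'))=H_*\big((\M^{n-1},\Delta_{n-1}(\M)\cup A_{n-1}(\M))\times\M\big)$ is free and concentrated in degrees $n-1$ and $n$, its degree-$n$ part being $H_{n-1}(\M^{n-1},\dots)\otimes H_1(\M)$. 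Since each of the $n$ relevant maps is a section of $p$, the inclusion-induced map $H_{n-1}(E',p^{-1}(B'))\to H_{n-1}(E,p^{-1}(B'))$ becomes, after identifying the target via $p_*$, the addition map on $n$ copies of $H_{n-1}(\M^{n-1},\dots)$; hence it is surjective with free kernel of rank $(n-1)\prod_{k=0}^{n-2}(2g+k)$. Feeding this into the long exact sequence of the triple gives $H_i(E,E')=0$ for $i\neq n$, together with a short exact sequence $0\to H_{n-1}(\M^{n-1},\dots)\otimes H_1(\M)\to H_n(E,E')\to\ker(\text{addition})\to 0$, so $H_n(E,E')$ is free of rank $(2g+n-1)\prod_{k=0}^{n-2}(2g+k)=\prod_{k=0}^{n-1}(2g+k)$. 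The base case $n=0$ is trivial.

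All of the above is natural for homeomorphisms of $\M$ fixing $\del\M$ pointwise, so $\Gamma$ acts compatibly on the whole diagram. Write $H=H_1(\M;\Z)$ and $V_m=H_m(\M^m,\Delta_m(\M)\cup A_m(\M))$. We obtain a $\Gamma$-equivariant short exact sequence $0\to V_{n-1}\otimes H\to V_n\to K_{n-1}\to 0$, where $K_{n-1}=\ker\big(V_{n-1}^{\oplus n}\to V_{n-1}\big)$ with $\Gamma$ acting diagonally (each section being $\Gamma$-invariant). For $n\le 1$ the claim about the kernel is exactly the definition of the Torelli group $\cJ(1)$. For $n\ge 2$ we have $\cJ(n-1)\subseteq\cJ(1)$, so by the inductive hypothesis $\cJ(n-1)$ acts trivially on $V_{n-1}$ and on $H$, hence on both the sub and the quotient above; thus each $\phi\in\cJ(n-1)$ acts on $V_n$ by $v\mapsto v+N_\phi(\bar v)$ for a homomorphism $N_\phi\colon K_{n-1}\to V_{n-1}\otimes H$, and $\phi\mapsto N_\phi$ is a group homomorphism. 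Conversely, if $\phi\notin\cJ(n-1)$ then by induction $\phi$ acts nontrivially on $V_{n-1}$, hence on $K_{n-1}$ (a quotient of $V_n$), hence on $V_n$. So the kernel of the action on $V_n$ is contained in $\cJ(n-1)$ and equals $\ker(N_\bullet)$; it remains to prove that $\ker(N_\bullet)=\cJ(n)$.

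This last identification is, I expect, the main obstacle. One must show that the secondary invariant $N_\phi$ recovers precisely the $(n-1)$-st Johnson homomorphism $\tau_{n-1}\colon\cJ(n-1)\to\Hom\big(H,\gamma_{n-1}\pi/\gamma_n\pi\big)$, whose kernel is $\cJ(n)$ by definition of the Johnson filtration. My approach would be to make the extension $0\to V_{n-1}\otimes H\to V_n\to K_{n-1}\to 0$ geometrically explicit: classes in $V_n$ are represented by tuples of arcs emanating from $p_0$ into the handles of $\M$, the submodule $V_{n-1}\otimes H$ consists of the classes for which the last arc is ``unlinked'' from the preceding data, and a mapping class $\phi\in\cJ(n-1)$ drags such an arc by an element of $\gamma_{n-1}\pi$, whose class modulo $\gamma_n\pi$ is exactly what the $n-1$ nested configuration coordinates are able to detect, deeper commutators being annihilated. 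Phrased invariantly, one wants to show that $V_n$ together with its filtration is the universal object of its type attached to $\pi/\gamma_n\pi$, so that a homeomorphism acts trivially on $V_n$ if and only if it acts trivially on $\pi/\gamma_n\pi$; the natural tools are an explicit wedge-of-circles model for $\M$, the Magnus expansion, and the agreement between the $I$-adic filtration of $\Z[\pi]$ and the lower central series of $\pi$. (Moriyama's own argument presumably proceeds more directly, via an explicit chain-level model, but should reduce to essentially the same computation at this step.)
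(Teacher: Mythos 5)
First, a point of orientation: the paper does not prove this statement. Theorem \ref{thm:Moriyama} is quoted verbatim as Theorem A of \cite{Moriyama} and used as a black box; the only related argument the authors supply is Lemma \ref{lem:naturalisoMoriyama}, which merely translates the statement through Poincar\'e--Lefschetz duality. So your proposal is an attempt to reprove Moriyama's theorem from scratch, and must be judged on its own merits rather than against an in-paper argument.

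On those merits, the first half of your proposal is correct and essentially complete. The decomposition of $\Delta_n(\M)\cup A_n(\M)$ over the projection $p$ into $p^{-1}(\Delta_{n-1}(\M)\cup A_{n-1}(\M))$ together with $n$ disjoint sections, the identification of $H_*(E',p^{-1}(B'))$ with $n$ copies of $V_{n-1}$ concentrated in degree $n-1$, the relative K\"unneth computation of $H_*(E,p^{-1}(B'))$, and the observation that the comparison map is the addition map all check out (modulo routine cofibration hypotheses), and the long exact sequence of the triple then gives concentration in degree $n$ and the rank $\prod_{k=0}^{n-1}(2g+k)$ by induction. The genuine gap is in the second half, and you name it yourself. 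You correctly reduce the kernel statement to showing that the homomorphism $\phi\mapsto N_\phi$ on $\cJ(n-1)$, valued in $\Hom(K_{n-1},V_{n-1}\otimes H)$, has kernel exactly $\cJ(n)$, but you then only gesture at why this should hold (``one must show\dots'', ``my approach would be\dots'', ``I expect\dots''). Both directions are missing: that $N_\phi=0$ for every $\phi\in\cJ(n)$, and, harder, that $N_\phi\neq0$ for every $\phi\in\cJ(n-1)\setminus\cJ(n)$. Relating $N_\phi$ to the Johnson homomorphism $\tau_{n-1}$ (or otherwise showing the two have the same kernel) is the actual content of Moriyama's theorem and requires the explicit chain-level model and Magnus-expansion computation you allude to but do not carry out. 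As it stands, the proposal proves the first sentence of the theorem and sets up, but does not prove, the second.
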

Here the action of $\Gamma$ on $H_n(\M^n,\Delta_n(\M)\cup A_n(\M))$ is defined similarly as in Subsection \ref{subsec:actionGamma},
by noting that the action of $\Homeo(\M,\del\M)$ on $\M^n$ preserves both subspaces $\Delta_n(\M)$ and $A_n(\M)$ of $\M^n$. The equivalence of Theorem \ref{thm:Moriyama} with the statement
of Subsection \ref{subsec:relationship} is an application of Poincare-Lefschetz duality
and was already observed by Moriyama. For completeness, we added the argument in Lemma \ref{lem:naturalisoMoriyama} in the Appendix \ref{Appendix}.

\begin{nota}
 For all $n\ge0$ we denote by $\Mor_n$ the $\Gamma$-representation given by $H_n (\M^n,\Delta_n(\M)\cup A_n(\M))$.
\end{nota}

\begin{defn}
 \label{defn:Apn}
We denote by $A'_n(\M)\subset\M^n$ the subspace
\[
  A'_n(\M)=\set{(z_1,\dots,z_n)\in \M^n\,|\, z_i\in\del\M\mbox{ for some } i}.
\]
\end{defn}
Clearly $A_n\subset A'_n$. Moreover the one-point compactification $F_n(\mathring{\M})^\infty$ can be regarded
as the quotient of the space $\M^n$ by the subspace $\Delta^n(\M)\cup A'_n(\M)$. In other words, using also the remarks
from Subsection \ref{subsec:naturalquasiiso}, we can identify $H^*(F_n(\mathring{\M}))$ with
\[
H_*(\M^n,\Delta_n(\M)\cup A'_n(\M))\cong H_*(F_n(\mathring{\M})^\infty,\infty)
\]
as $\Gamma$-representations.

\subsection{Functoriality}
\label{subsec:functoriality}
In the previous subsection we have associated several spaces with our surface $\M$, namely
$\M^n$, $\Delta_n(\M)$, $A_n(\M)$, $A'_n(\M)$, $F_n(\mathring{\M})$ and $F_n(\mathring{\M})^\infty$. Let $f\colon \M\to\M$ be a continuous map;
then $f$ always induces a map $f^n\colon\M^n\to \M^n$; the map $f^n$ restricts to a map:
\begin{itemize}
 \item $\Delta_n(f)\colon\Delta_n(\M)\to\Delta_n(\M)$, always;
 \item $A_n(f)\colon A_n(\M)\to A_n(\M)$, if $f(p_0)=p_0$;
 \item $A'_n(f)\colon A'_n(\M)\to A'_n(\M)$, if $f(\del\M)\subseteq\del\M$;
 \item $F_n(f)\colon F_n(\mathring{\M})\to F_n(\mathring{\M})$, if $f$ is injective (and hence $f(\mathring{\M})\subseteq\mathring{\M}$,
 since $\M$ is a manifold with boundary);
\end{itemize}
and induces a map $F_n(f)^\infty\colon F_n(\mathring{\M})^\infty\to F_n(\mathring{\M})^\infty$,
if $f(\del\M)\subseteq\del\M$.

The previous discussion can be enhanced to an enriched setting; for us it suffices to note that a homotopy $f\colon \M\times[0,1]\to\M$ between the maps
$f_0,f_1\colon\M\to\M$ induces a homotopy $f^n\colon \M^n\times[0,1]\to\M$, restricting to homotopies
\begin{itemize}
 \item $\Delta_n(f)\colon\Delta_n(\M)\times[0,1]\to\Delta_n(\M)$, always;
 \item $A_n(f)\colon A_n(\M)\times[0,1]\to A_n(\M)$, if $f_t(p_0)=p_0$ for all $t\in[0,1]$;
 \item $A'_n(f)\colon A'_n(\M)\times[0,1]\to A'_n(\M)$, if $f_t(\del\M)\subseteq\del\M$ for all $t\in[0,1]$;
 \item $F_n(f)\colon F_n(\mathring{\M})\times[0,1]\to F_n(\mathring{\M})$, if $f_t$ is injective for all $t\in[0,1]$;
\end{itemize}
and inducing a homotopy $F_n(f)^\infty\colon F_n(\mathring{\M})^\infty\times[0,1]\to F_n(\mathring{\M})^\infty$,
if $f_t(\del\M)\subseteq\del\M$ for all $t\in[0,1]$.

One of the advantages of working with the couples $(\M^n, \Delta_n(\M)\cup A_n(\M))$ and $(\M^n,\Delta_n(\M)\cup A'_n)$,
or with the one-point compactification $F_n(\mathring{\M})^\infty$, rather than with the plain configuration space $F_n(\mathring{\M})$,
is that in the first cases one can leverage on functoriality of a much larger class of self-maps of $\M$.

From now on we will abbreviate $\Delta_n=\Delta_n(\M)$, $A_n=A_n(\M)$ and $A'_n=A'_n(\M)$.
We also expand our notation as follows. For a finite set $S$ we denote:
\begin{itemize}
 \item $\M^S$ for the space of all maps of sets $S\to\M$;
 \item $A'_S\subset\M^S$ for the subspace of those maps whose image intersects $\del\M$;
 \item $A_S\subset A'_S$ for the subspace of those maps which have $p_0$ in their image;
 \item $\Delta_S\subset\M^S$ for the subspace of those maps which are not injective;
 \item $F_S(\M)\subset\M^S$ for the subspace of those maps which are injective and have image in $\mathring{\M}$;
 \item $\Mor_S$ for the $\Gamma$-representation $H_{|S|}(\M^S,\Delta_S\cup A_S)$.
\end{itemize}

\section{Cell stratifications of configuration spaces} \label{sec3}
In this section we define a cell stratification of the space $F_n(\mathring{\M})$, or equivalently a
relative cell structure of the couple $(F_n(\mathring{\M})^\infty,\infty)$. This construction is similar in
spirit to the Fox--Neuwirth and Fuchs stratifications of unordered
configuration spaces of $\R^d$ \cite{FoxNeuwirth, Fuchs:CohomBraidModtwo}. A similar construction
was used in \cite{Bianchi}.

\subsection{A convenient model for \texorpdfstring{$\M$}{M}}
We introduce a convenient model $\bT(\M)$ for our surface $\M\cong\Sigma_{g,1}$, arising as a quotient
of the rectangle $[0,2]\times[0,1]$.

\begin{nota}
\label{nota:IJi}
We consider the following dissection of the unit interval $[0,1]$ into $4g$ intervals
denoted $I_i,J_i,I'_i,J'_i$, for $1\le i\le g$: we set
\[
\begin{array}{cc}
I_i=\left[\frac{4i-4}{4g},\frac {4i-3}{4g}\right]; & J_i=\left[\frac{4i-3}{4g},\frac {4i-2}{4g}\right];\\[10pt]
I'_i=\left[\frac{4i-2}{4g},\frac {4i-1}{4g}\right];&  J'_i=\left[\frac{4i-1}{4g},\frac {4i}{4g}\right]. 
\end{array}
\]
We consider, for $1\le i\le g$, the following linear homeomorphism between the above
intervals and the entire interval $[0,1]$:
\[
 \begin{array}{cc}
  \rho^I_i\colon[0,1]\to I_i; & \rho^I_i\colon t\mapsto \frac{4i-4}{4g} + \frac{t}{4g};\\[5pt]
  \rho^J_i\colon[0,1]\to J_i; & \rho^J_i\colon t\mapsto \frac{4i-3}{4g} + \frac{t}{4g};\\[5pt]
  \rho^{I'}_i\colon[0,1]\to I'_i; & \rho^{I'}_i\colon t\mapsto \frac{4i-1}{4g} - \frac{t}{4g};\\[5pt]
  \rho^{J'}_i\colon[0,1]\to J'_i; & \rho^{J'}_i\colon t\mapsto \frac{4i}{4g} - \frac{t}{4g}.
 \end{array}
\]
\end{nota}

\begin{defn}
\label{def:T(M)}
We define $\bT(\M)$ as the space obtained from the rectangle $[0,2]\times[0,1]$ applying the following identifications.
\begin{itemize}
 \item For $1\leq i\leq g$ we identify the right vertical segments
 $2\times I_i$ and $2\times I'_i$ along the homeomorphism $(2,\rho^I_i(t))\mapsto (2,\rho^{I'}_i(t))$, for $t\in [0,1]$.
 \item For $1\leq i\leq g$ we identify the right vertical segments
 $2\times J_i$ and $2\times J'_i$ along the homeomorphism $(2,\rho^J_i(t))\mapsto (2,\rho^{J'}_i(t))$, for $t\in [0,1]$.
\end{itemize}

The space $T(\M)$ is obtained from $\bT(\M)$ by removing the image in the quotient of
the subspace $\set{0}\times [0,1]\cup [0,2]\times\set{0,1}$.
\end{defn}

The space $\bT(\M)$ is homeomorphic to the closed surface $\M\cong\Sigma_{g,1}$, and $T(\M)$ is homeomorphic
to its interior.
The space $\bT(\M)$ is endowed with the following cell decomposition:
\begin{itemize}
 \item there is 1 zero-cell, that we call $p_0$, following Moriyama; it is the image in the quotient
 of the points $2\times \frac{i}{4g}$ for $0\le i\le 4g$;
 \item there are $2g+1$ one-cells, namely:
 \begin{itemize}
  \item there is 1 one-cell $\I$, which is the image in the quotient of the union $\set{0}\times[0,1]\cup[0,2)\times\set{0,1}$;
 \item there are $g$ one-cells, denoted $\U_i$ for $1\leq i\leq g$,
 which are the images in the quotient of the pairs of open segments $\set{1}\times \mathring I_i$ and $\set{1}\times \mathring I'_i$;
 \item there are $g$ one-cells, denoted $\V_i$ for $1\leq i\leq g$,
 which are the images in the quotient of the pairs of open segments
 $\set{1}\times \mathring J_i$ and $\set{1}\times \mathring J'_i$.
 \end{itemize}
 \item there is 1 two-cell $E$, which is the image in the quotient of $(0,2)\times (0,1)$.
\end{itemize}
See Figure \ref{fig:Mopen}, where the space $\bT(\M)$ is represented in the case $g=2$.
Clearly $T(\M)$ is the union of the open cells $E$, $\U_i$ and $\V_i$ for $1\le i\le g$.

\begin{figure}[ht]
 \centering
 \begin{overpic}[width=12cm]{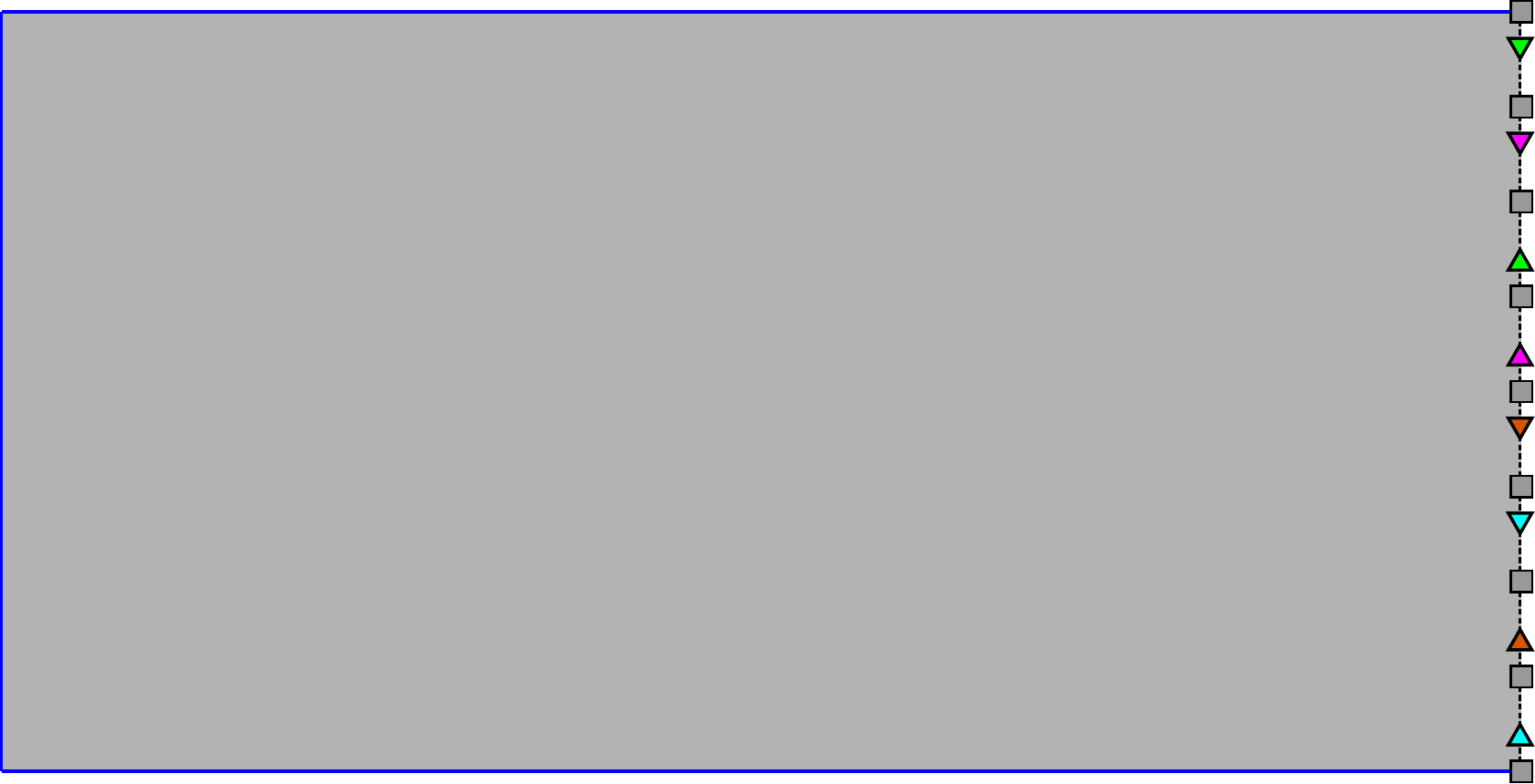}
 \put(50,25){$E$} \put(94,3){$\U_1$}  \put(94,9){$\V_1$}  \put(94,15){$\U_1$} \put(94,21){$\V_1$} \put(94,28){$\U_2$} \put(94,34){$\V_2$} \put(94,40){$\U_2$} \put(94,46){$\V_2$}
 \put(100.5,0){$p_0$}\put(100.5,6){$p_0$}\put(100.5,12){$p_0$}\put(100.5,19){$p_0$}\put(100.5,25){$p_0$}\put(100.5,31){$p_0$}\put(100.5,37){$p_0$}\put(100.5,44){$p_0$}\put(100.5,50){$p_0$}
 \put(30,1.5){$\textcolor{blue}{\I}$}
 \end{overpic}
 \caption{A cell decomposition of $\bT(\M)$}
 \label{fig:Mopen}
\end{figure}
From now on we will not distinguish between the surface $\M$ and its model $\bT(\M)$.

\subsection{Cell stratification of configuration spaces}
We stratify $F_n(\mathring{\M})$ by open cells $e_{\ft}$.

\begin{defn}
\label{defn:et}
Let $S$ be a finite set.
A \emph{tuple} on the set $S$, denoted generically $\ft$,
is a choice of the following set of data:
\begin{itemize}
 \item an integer $\ell\ge0$ called the \emph{length} of the tuple;
 \item a partition of the set $S$ into subsets $P_1,\dots,P_\ell,U_1,\dots,U_g,V_1,\dots,V_g$,
 with $P_1,\dots,P_\ell$ non-empty;
 \item a total order $\prec_{P_i}$, $\prec_{U_i}$ or $\prec_{V_i}$ on each of the previous subsets.
\end{itemize}
We generically write $\ft=(\ell,\uP,\uU,\uV)$, where each underlined letter denotes the sequence of
finite sets, and we omit the sequence of total orders from the notation. We replace $\uP$, $\uU$ or $\uV$
by ``$()$'' whenever we want to emphasize that $\uP$, $\uU$ or $\uV$ consists of no set.
We define the \emph{weight} of $\ft$ to be the cardinality of $S$, and write $\fw(\ft)=|S|$.
The \emph{degree} of $\ft$ is defined as $d(\ft)=\fw(\ft)+\ell$.

For a tuple on $S$, let $e_\ft$ be the subspace of $F_S(\M)$ of configurations $(z_i)_{i\in S}$ of
$|S|$ distinct points in $\mathring{\M}$ such that
\begin{itemize}
 \item for all $1\le i\le g$ the points $z_j$ with $j\in U_i$ lie on the one-cell $\U_i$;
 for all $j,j'\in U_i$, regard $z_j$ and $z_{j'}$ as real numbers in the interior of the interval $I_i$,
 under the identification $I_i\cong \set{2}\times I_i$ followed by the quotient map $[0,2]\times[0,1]\to \M$:
 then $j\prec_{U_i} j'$ if and only if $z_j<z_{j'}$;
 \item for all $1\le i\le g$ the points $z_j$ with $j\in V_i$ lie on the one-cell $\V_i$;
 for all $j,j'\in V_i$, regard $z_j$ and $z_{j'}$ as real numbers in the interior of the interval $J_i$,
 under the identification $J_i\cong \set{2}\times J_i$ followed by the quotient map $[0,2]\times[0,1]\to \M$:
 then $j\prec_{V_i} j'$ if and only if $z_j<z_{j'}$;
 \item there are precisely $\ell$ numbers $0<x_1<\dots<x_\ell<2$ such that each point $z_j\in E$ lies on some
 open segment $\set{x_i}\times(0,1)$, and viceversa each segment $\set{x_i}\times(0,1)$ contains some point $z_j$;
 for all $j\in P_i$ the point $z_j$ lies on $\set{x_i}\times(0,1)$; for all $j,j'\in P_i$, regard $z_j$ and $z_{j'}$
 as real numbers in $(0,1)$ under the identification $\set{x_i}\times(0,1)\cong(0,1)$:
 then $j\prec_{P_i}j'$ if and only if $z_j<z_{j'}$.
\end{itemize}
\end{defn}
See Figure \ref{fig:cell}.

\begin{figure}[ht]
 \centering
 \begin{overpic}[width=12cm]{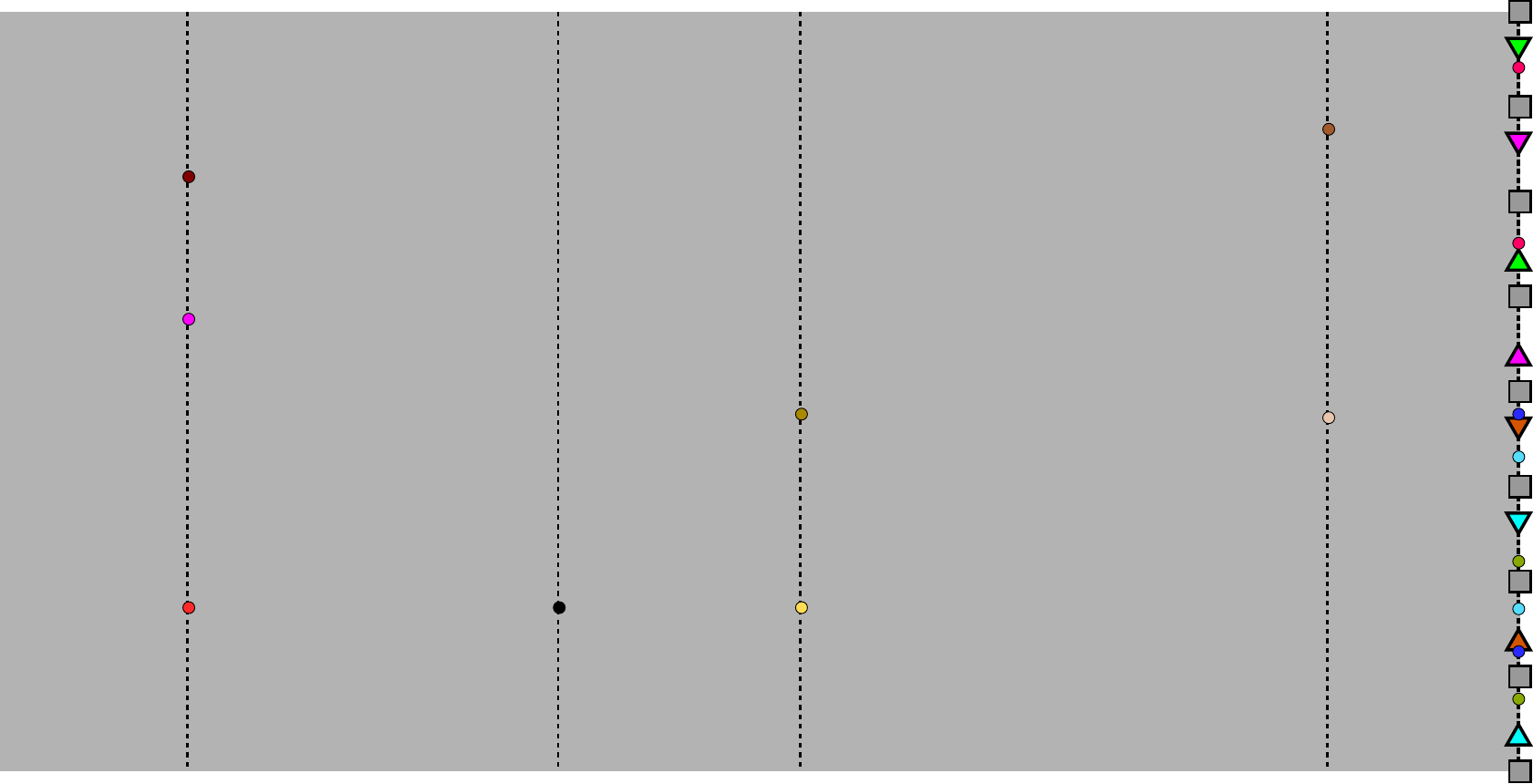}
 \put(58,25){$E$} \put(94,3){$\U_1$} \put(94,9){$\V_1$} \put(94,15){$\U_1$} \put(94,21){$\V_1$} \put(94,27){$\U_2$} \put(94,33){$\V_2$} \put(94,39){$\U_2$} \put(94,45){$\V_2$}
 \put(13,10){$z_{11}$}\put(13,29){$z_9$}\put(13,38){$z_2$}
 \put(37,10){$z_{10}$}
 \put(48,10){$z_{12}$}\put(48,24){$z_3$}
 \put(83,24){$z_5$}\put(83,41){$z_8$}
 \put(100,4){$z_6$}\put(100,8){$z_1$}\put(100,10.5){$z_4$}\put(100,14){$z_6$}
 \put(100,21){$z_4$}\put(100,24){$z_1$}\put(100,35){$z_7$}\put(100,46){$z_7$}
 \end{overpic}
 \caption{A configuration lying in the open cell $e_\ft\subset F_{12}(\M)$, where $\ft=(\ell,\uP,\uU,\uV)$
 satisfies, for instance, $\ell=4$, $11\prec_{P_1} 9$, $P_2=\set{10}$ and 
 $1\prec_{V_1}4$.}
 \label{fig:cell}
\end{figure}

The space $e_\ft$ is homeomorphic to an open ball of dimension $d(\ft)$. More precisely, let $\Delta^\ft$
be the following multisimplex
\[
 \Delta^\ft=\Delta^\ell\times\prod_{i=1}^\ell\Delta^{|P_i|}\times\prod_{i=1}^g(\Delta^{|U_i|}\times\Delta^{|V_i|}).
\]
Then there is a map $\Phi^\ft\colon\Delta^\ft\to\M^S$ with the following properties:
\begin{itemize}
 \item $\Phi^\ft$ maps $\mathring{\Delta}^\ft$ homeomorphically onto $e_\ft$;
 \item $\Phi^\ft$ maps $\del\Delta^\ft$ inside the union of $\Delta_S$, $A'_S$ and all subspaces $e_{\ft'}$
 corresponding to tuples $\ft'$ on $S$ of length $<\ell$.
\end{itemize}
Note that if $\ft'$ and $\ft$ are tuples on $S$ and the length of $\ft'$ is strictly smaller than the length
of $\ft$, then $d(\ft')<d(\ft)$, i.e. the dimension
of the cell $e_{\ft'}$ is lower than the dimension of the cell $e_\ft$. It follows that the cells
$e_\ft$, for $\ft$ varying among tuples on $S$, give a relative cell decomposition for the couple of spaces
$(\M^S,\Delta_S\cup A'_S)$; and taking the quotient by $\Delta_S\cup A'_S$, we obtain a cell decomposition
on $F_S(\M)^\infty$ with a single zero-cell $\infty$.

The map $\Phi^\ft$ is constructed explicitly as follows. First we construct a map
$\tPhi^\ft\colon\Delta^\ft\to([0,2]\times[0,1])^S$, i.e. $|S|$ maps $\tPhi^\ft_j\colon \Delta^\ft\to[0,2]\times[0,1]$, one for each $j\in S$:
\begin{itemize}
 \item for all $j\in P_i$, denote by $1\le\iota\le|P_i|$ the position of $j$ in the total order $\prec_{P_i}$;
 we define $\tPhi^\ft_j=(2x_i)\times y^P_{i,\iota}$, where $x_i\colon \Delta^\ft\to[0,1]$ is the $i$\textsuperscript{th} coordinate
 of the factor $\Delta^\ell$, and $y^P_{i,\iota}$ is the $\iota$\textsuperscript{th} coordinate of the factor $\Delta^{|P_i|}$;
 \item for all $j\in U_i$, denote by $1\le\iota\le|U_i|$ the position of $j$ in the total order $\prec_{U_i}$;
 we define $\tPhi^\ft_j=2\times (\rho^I_i\circ y^U_{i,\iota})$,
 where $y^U_{i,\iota}$ is the $\iota$\textsuperscript{th} coordinate of the factor $\Delta^{|U_i|}$;
 \item for all $j\in V_i$, denote by $1\le\iota\le|V_i|$ the position of $j$ in the total order $\prec_{V_i}$;
 we define $\tPhi^\ft_j=2\times (\rho^J_i\circ y^V_{i,\iota})$,
 where $y^V_{i,\iota}$ is the $\iota$\textsuperscript{th} coordinate of the factor $\Delta^{|V_i|}$.
\end{itemize}
We then let $\Phi^\ft$ be the composition of $\tPhi^\ft$ with the quotient map
$([0,2]\times[0,1])^S\to\M^S$.

\section{Cellular approximation of homeomorphisms}
\label{sec4}

Let $D$ denote a closed disc in $\M$ embedded near the boundary, i.e. $\M$ can be regarded as the boundary
connected sum of $D$ and another subsurface of type $\Sigma_{g,1}$.

In the model $\bT(\M)$ of the surface $\M$, we let $D$ be the left unit square, i.e. the image in the quotient
of $[0,1]\times[0,1]\subset[0,2]\times[0,1]$.

\begin{defn}
 We denote by $\Homeo(\M,\del\M\cup D)\subset\Homeo(\M,\del\M)$ the subgroup of homeomorphisms
 that fix $\del\M\cup D$ pointwise.
\end{defn}
The inclusion $\Homeo(\M,\del\M\cup D)\to \Homeo(\M,\del\M)$ is a homotopy
equivalence, in particular it induces a bijection
\[
\pi_0(\Homeo(\M,\del\M\cup D))\to\pi_0(\Homeo(\M,\del\M)).
\]
We can thus identify $\Gamma$ with the group $\pi_0(\Homeo(\M,\del\M\cup D))$.

\subsection{The map \texorpdfstring{$\tau$}{tau}}
In Section \ref{sec3}, we constructed a cell structure giving a chain complex computing the homology of configuration spaces. In order to prove our results about mapping class groups, we need to show that this cell structure is compatible with the mapping class group action. To do this, we will define a map $\tau\colon \M\to \M$ which is homotopic to the identity,  and use $\tau$ to show the action of an arbitrary homeomorphism in $\Homeo(\M,\del\M\cup D)$  is homotopic to a cellular map.

\label{subsec:themaptau}
\begin{defn}
 \label{defn:tau}
We define a continuous (but not injective) map $\tau\colon \M\to \M$;
it is induced on the quotient by the map
$\tilde\tau\colon[0,2]\times[0,1]\to [0,2]\times[0,1]$ given by the following formula
\[
 \tau(x,y)=\left\{
\begin{array}{ll}
 (2x,y)& \mbox{if } 0\le x\le 1;\\
 (2,y) & \mbox{if } 1\le x\le 2.
\end{array}
 \right.
\] 
\end{defn}
The map $\tau$ expands the disc $D$ horizontally and collapses the dark grey region onto the ``right
side'' of $M$. Note that $\tau$ preserves $\del\M$ but does not fix it pointwise;
note also that $\tau$ is homotopic to the identity of $\M$ through maps that preserve $\del\M$.
See Figure \ref{fig:tau}.

\begin{figure}[ht]
 \centering
 \begin{overpic}[width=12cm]{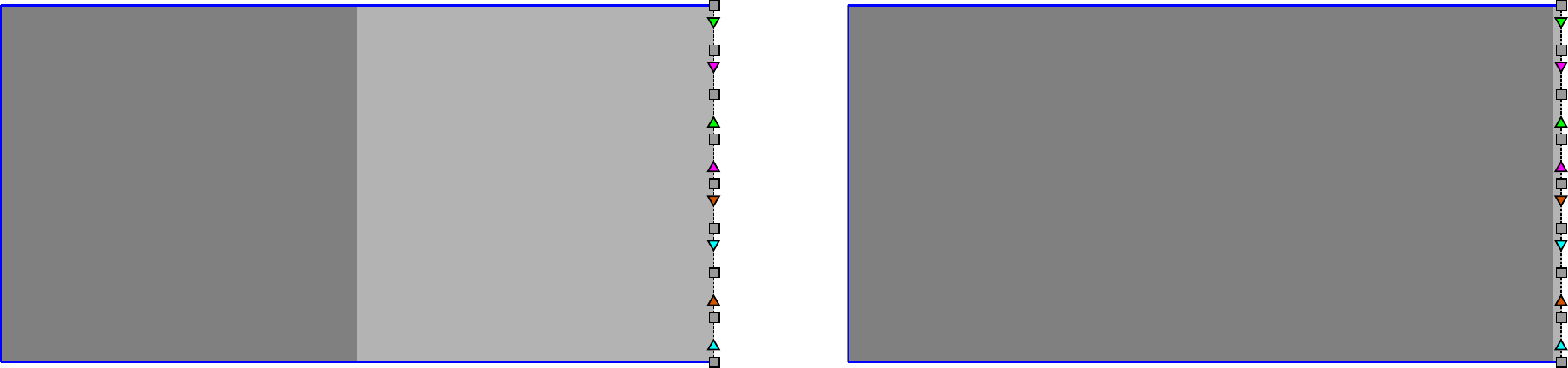}
 \put(13,10){$D$}
 \put(49,10){$\overset{\tau}{\to}$}
 \end{overpic}
 \caption{The map $\tau$ expands $D$ and contracts horizontally the rest of $\M$.}
 \label{fig:tau}
\end{figure}

Let $S$ be a finite set. As remarked in Subsection \ref{subsec:functoriality},
$\tau$ does \emph{not} induce a map on configuration spaces $F_S(\M)\to F_S(\M)$,
since it is not injective. Nevertheless, $\tau$ induces a map $\tau^S\colon\M^S\to\M^S$ preserving all subspaces
$\Delta_S$, $A_S$ and $A'_S$: in the case of $A_S$ we use in particular
that $\tau\colon\M\to\M$ fixes the point $p_0$.
It follows that $\tau$ induces also a map $F_S(\tau)^\infty\colon F_S(\M)^\infty\to F_S(\M)^\infty$.

\begin{lem}
 The map $F_S(\tau)^\infty\colon F_S(\M)^\infty\to F_S(\M)^\infty$ is cellular. More generally,
 let $f\in\Homeo(\M,\del\M\cup D)$. Then
 $F_S(\tau)^\infty\circ F_S(f)^\infty=F_S(\tau\circ f)^\infty$ is a cellular self-map of $F_S(\M)^\infty$.
\end{lem}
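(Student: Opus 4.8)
The plan is to verify cellularity directly from the explicit cell structure on $F_S(\M)^\infty$ built from the cells $e_\ft$, by analyzing how $\tau$ (and more generally $\tau \circ f$) acts on a configuration in a cell $e_\ft$ and showing the image lies in a cell of the same or lower dimension. Recall that the cells $e_\ft$ are indexed by tuples $\ft = (\ell, \uP, \uU, \uV)$ on $S$, and $\mathring\Delta^\ft$ maps homeomorphically onto $e_\ft$ while $\del\Delta^\ft$ maps into $\Delta_S \cup A'_S$ together with cells $e_{\ft'}$ of strictly smaller length. So it suffices to show: for each tuple $\ft$, the map $\tau^S$ sends $e_\ft$ into the union of $\Delta_S \cup A'_S$ and cells $e_{\ft'}$ with $d(\ft') \le d(\ft)$; since $\tau^S$ preserves $\Delta_S$ and $A'_S$, the only real content is the behavior on the complementary open cells, i.e. on $F_S(\M)$ itself.

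The key computation is local. Take a configuration $(z_j)_{j \in S} \in e_\ft$. Points $z_j$ with $j \in U_i$ or $j \in V_i$ lie on the one-cells $\U_i$, $\V_i$, which sit over the right edge $\{2\} \times [0,1]$; on this edge $\tilde\tau$ is the identity, so these points are fixed and continue to lie on the same one-cells with the same total orders. Points $z_j$ with $j \in P_i$ lie on a vertical segment $\{x_i\} \times (0,1)$ with $0 < x_1 < \dots < x_\ell < 2$. Under $\tilde\tau(x,y)$, such a point maps to $(2x_i, y)$ if $x_i \le 1/2$ (landing on an interior vertical segment), to $(2,y)$ if $x_i \ge 1/2$ (landing on the right edge, hence onto $\U_i$ or $\V_i$ or onto $p_0$, i.e. into $A'_S$). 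Writing $\{x_i : x_i \le 1/2\} = \{x_{i_1} < \dots < x_{i_m}\}$, the images $2x_{i_1} < \dots < 2x_{i_m}$ are still distinct, giving a configuration in a cell $e_{\ft'}$ whose length is $m \le \ell$, whose $\uP$-part is obtained by keeping the blocks $P_{i_1}, \dots, P_{i_m}$ with their orders, and whose $\uU$, $\uV$-parts absorb the remaining blocks $P_i$ with $x_i \ge 1/2$ (merged into the appropriate $U_i$ or $V_i$ at the correct positions) plus any points that hit $p_0$ are pushed into $A'_S$. In every case the weight is unchanged and the length does not increase, so $d(\ft') \le d(\ft)$; moreover when $d(\ft') = d(\ft)$ no block is lost, i.e. $\tau^S$ maps $e_\ft$ homeomorphically onto $e_{\ft'}$ of the same dimension or strictly drops dimension. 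This is exactly the cellularity condition.

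For the general statement, note $f \in \Homeo(\M,\del\M \cup D)$ fixes the disc $D = [0,1]\times[0,1]$ pointwise and fixes $\del\M$. The map $\tilde\tau$ has image contained in $D \cup (\{2\}\times[0,1])$ — more precisely $\tau(\M) \subset D \cup \del\M$ after passing to the quotient, where $f$ is the identity. Hence $\tau \circ f = \tau$ away from configurations, and at the level of $F_S(\cdot)^\infty$ we have $F_S(\tau)^\infty \circ F_S(f)^\infty = F_S(\tau \circ f)^\infty$ by functoriality (Subsection \ref{subsec:functoriality}), and since $\tau \circ f$ agrees with $\tau$ on the image of $\tau \circ f$, the composite $F_S(\tau \circ f)^\infty$ factors through the cellular map $F_S(\tau)^\infty$ exactly as analyzed above, hence is cellular. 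The main obstacle I anticipate is purely bookkeeping: carefully defining the target tuple $\ft'$ — in particular correctly interleaving the merged $P_i$-blocks (those with $x_i \ge 1/2$) into the existing orders on $U_i$ and $V_i$, and checking consistency with the identifications $I_i \sim I'_i$, $J_i \sim J'_i$ used to define $\bT(\M)$ — and confirming that degenerations only lower, never raise, the degree $d(\ft)$. There is no hard geometric input; once the local picture of $\tilde\tau$ on vertical segments is set up, cellularity is immediate from the combinatorial description of the $e_\ft$.
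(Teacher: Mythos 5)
Your analysis of $F_S(\tau)^\infty$ alone is essentially the paper's argument (a direct case analysis on the open cells $e_\ft$, checking that the length, hence the degree, cannot increase), up to a coordinate slip: the rectangle is $[0,2]\times[0,1]$ with $D=[0,1]\times[0,1]$, so the threshold for $\tilde\tau$ is $x=1$, not $x=1/2$. That part is fine.

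The second half, however, has a genuine gap. Your key claim that ``$\tilde\tau$ has image contained in $D\cup(\set{2}\times[0,1])$'', i.e.\ $\tau(\M)\subset D\cup\del\M$, is false: $\tau$ \emph{expands} $D$ over the whole surface (it is surjective), and it is the region $[1,2]\times[0,1]$ that gets collapsed onto the right edge. Consequently the inference ``$\tau\circ f=\tau$ away from configurations'' and the conclusion that $F_S(\tau\circ f)^\infty$ ``factors through the cellular map $F_S(\tau)^\infty$'' do not work; in any case, writing the composite as $F_S(\tau)^\infty\circ F_S(f)^\infty$ with the \emph{second} factor non-cellular proves nothing. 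The correct mechanism, which is what the paper uses, is: $f$ fixes $\mathring{D}$ pointwise and therefore preserves $\M\setminus\mathring{D}$ setwise, while $\tau$ collapses $\M\setminus\mathring{D}$ onto the one-skeleton $X\cup\del\M$. Hence $\tau\circ f$ agrees with $\tau$ on $\mathring{D}$ and sends everything else into $X\cup\del\M$, and one reruns your three-way case analysis for $\tau\circ f$: if the image configuration is non-injective or meets $p_0$ it goes to $\infty$; if some point of the configuration lies in $E\setminus\mathring{D}$ the length strictly drops; otherwise the points in $\mathring{D}$ behave exactly as under $\tau$ and the points on the one-cells land somewhere in $X$, giving a cell $(\ell,\uP,\uU',\uV')$ of the same dimension. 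Note in particular that your statement that points on $\U_i$ and $\V_i$ ``are fixed and continue to lie on the same one-cells with the same total orders'' fails for $\tau\circ f$: the one-cells lie in the interior of $\M$, not in $\del\M\cup D$, so $f$ may move such points anywhere in $\M\setminus\mathring{D}$ before $\tau$ projects them back to $X$, possibly onto different one-cells, onto $p_0$, or into a collision.
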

\begin{proof}
 It suffices to prove the second statement, since the first follows by considering $f=\Id_{\M}$.
 It is clear that both $F_S(f)^\infty$ and $F_S(\tau)^\infty$ preserve the point $\infty\in F_S(\M)^\infty$, hence
 $F_S(\tau\circ f)^\infty$ preserves the zero-skeleton of $F_S(\M)^\infty$.
 
 Let $\ft=(\ell,\uP,\uU,\uV)$ be a tuple on $S$, and let $(z_i)_{i\in S}\in e_\ft\subset F_S(\M)\subset F_S(\M)^\infty$.
 There are three possibilities:
 \begin{enumerate}
  \item the map $\tau\circ f\colon \M\to \M$ is not injective on the set $\set{z_i}_{i\in S}$, or
  it sends some $z_i\mapsto p_0$; then $F_S(\tau\circ f)^\infty$ sends $(z_i)_{i\in S}$ to $\infty$;
  \item the map $\tau\circ f$ is injective on $\set{z_i}_{i\in S}\cup\set{p_0}$, and some $z_i$ lies in the region
  $[1,2)\times(0,1)\subset \mathring{\M}$; then $F_S(\tau\circ f)^\infty$ sends $(z_i)_{i\in S}$ to a configuration
  in $F_S(\M)\subset F_S(\M)^\infty$ belonging to a cell $e_{\ft'}$ with $\ft'$ being a tuple on $S$ of strictly
  lower length than $S$, in particular $e_{\ft'}$ has strictly lower dimension than $e_{\ft}$;
  \item the map $\tau\circ f$ is injective on $\set{z_i}_{i\in S}\cup\set{p_0}$, and all points $z_i$ lie either in the
  interior of $D$, or on the open one-cells $\U_i$ and $\V_i$; then $F_S(\tau\circ f)^\infty$ sends $(z_i)_{i\in S}$ to a
  configuration  in $F_S(\M)$ belonging to a cell $e_{\ft'}$ with $\ft'$ of the form $(\ell,\uP,\uU',\uV')$
  for some $\uU'$ and $\uV'$.
 \end{enumerate} \end{proof}

A particular instance of the fact that $F_S(\tau\circ f)^\infty$ is cellular is the following.
Let $\ft=(\ell,\uP,(),())$ be a tuple on $S$, where we use the notation from Definition \ref{defn:et};
denote by $e_\ft^{\mathring{D}}\subset e_\ft$ the subspace of configurations
$(z_i)_{i\in S}\in e_\ft$ such that all points $z_i$ lie in $\mathring{D}$. Then $F_S(\tau\circ f)^\infty$
maps $e_\ft^{\mathring{D}}$ homeomorphically onto $e_\ft$, and sends every configuration
in $e_\ft\setminus e_\ft^{\mathring{D}}$ to $\infty$ or to cells $e_{\ft'}$ of strictly lower dimension.

\subsection{Superposition of configurations}
\label{subsec:superposition}
One of the advantages of working with the one-point compactified configuration space $F_S(\M)^\infty$
is that we can allow collisions between points in a configuration: if two points $z_i$ and $z_j$
collide, we simply declare the result to be the ``configuration'' $\infty\in F_S(\M)^\infty$.

We have already encountered this idea in the discussion of functoriality in Subsection \ref{subsec:functoriality},
and exploited it in the previous subsection,
where postcomposition with $F_S(\tau)^\infty$ makes
every map $F_S(f)^\infty$, coming from a homeomorphism $f$, into a cellular map.
In this subsection we use this principle again to construct superposition maps.

\begin{defn}
\label{defn:mu}
 For all bipartitions $Q\sqcup R=S$, possibly with $Q$ or $R$ being empty, we define a map
 \[
  \mu_{Q,R}\colon F_Q(\M)^\infty\wedge F_R(\M)^\infty\to F_S(\M)^\infty.
 \]
 We start with the canonical identification $\tilde \mu_{Q,R}\colon \M^Q\times \M^R\cong \M^S$,
 and note that it restricts to an inclusion
 \[
  \tilde \mu_{Q,R}\colon (\Delta_Q\cup A'_Q)\times \M^R\ \cup\ \M^Q\times (\Delta_R\cup A'_R)\to \Delta_S\cup A'_S.
 \]
 We denote $\mu_{Q,R}$ the induced map on the quotients.
\end{defn}
For any continuous map $\mathfrak{f}\colon \M\to\M$ preserving $\del\M$ the following diagram is commutative on the nose:
\[
 \begin{tikzcd}
  F_Q(\M)^\infty\wedge F_R(\M)^\infty\ar[d,"\mathfrak{f}_*\wedge \mathfrak{f}_*"]\ar[r,"\mu_{Q,R}"] & F_S(\M)^\infty\ar[d,"\mathfrak{f}_*"]\\
  F_Q(\M)^\infty\wedge F_R(\M)^\infty\ar[r,"\mu_{Q,R}"] & F_S(\M)^\infty.\\
 \end{tikzcd}
\]
This holds in particular when $\mathfrak{f}$ has the form $\tau\circ f$, for some homeomorphism $f\in\Homeo(\M,\del\M\cup D)$.
\begin{lem}
 \label{lem:mucellular}
 The map $\mu_{Q,R}\colon F_Q(\M)^\infty\wedge F_R(\M)^\infty\to F_S(\M)^\infty$ is cellular, where we endow
 $F_Q(\M)^\infty\wedge F_R(\M)^\infty$ with the product cell structure.
\end{lem}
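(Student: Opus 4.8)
The plan is to unwind the cell structures on both sides and check that $\mu_{Q,R}$ sends each open cell of the product $F_Q(\M)^\infty \wedge F_R(\M)^\infty$ either to $\infty$ or into a single open cell of $F_S(\M)^\infty$ of the same or lower dimension. The open cells of $F_Q(\M)^\infty$ are the $e_{\ft_Q}$ for $\ft_Q$ a tuple on $Q$, together with the $0$-cell $\infty$; similarly for $R$. Hence a typical open cell of the smash product, other than the basepoint, has the form $e_{\ft_Q} \times e_{\ft_R}$, where $\ft_Q = (\ell_Q, \uP_Q, \uU_Q, \uV_Q)$ and $\ft_R = (\ell_R, \uP_R, \uU_R, \uV_R)$. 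First I would take a configuration $(z_i)_{i\in Q} \in e_{\ft_Q}$ and $(w_j)_{j\in R} \in e_{\ft_R}$, and analyze the image configuration $(z_i)_{i\in Q} \sqcup (w_j)_{j\in R} \in \M^S$ under $\tilde\mu_{Q,R}$, which is just the ``union'' configuration on the index set $S = Q \sqcup R$.

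Next I would split into cases according to whether the combined configuration is still a configuration of distinct points missing $\del\M$ (so that it lands in $F_S(\M)$ rather than being sent to $\infty$). Since each of $e_{\ft_Q}$ and $e_{\ft_R}$ already consists of configurations of distinct points in $\mathring\M$, the only new phenomenon is a collision between some $z_i$ and some $w_j$; whenever this happens, $\mu_{Q,R}$ sends the point to $\infty$, consistent with cellularity. If there are no such collisions, the union configuration lies in some open cell $e_{\ft}$ of $F_S(\M)$, and I must identify $\ft = (\ell, \uP, \uU, \uV)$ in terms of $\ft_Q$ and $\ft_R$: the $\U_i$- and $\V_i$-data simply concatenate (the orders $\prec_{U_i}$ on $Q$-points and on $R$-points are interleaved according to the real-number coordinates on the arc $\U_i$), and the vertical segments $\{x_1 < \dots < x_\ell\}$ supporting the $E$-points of the union are the union of the segments coming from $\ft_Q$ and from $\ft_R$, so $\ell \le \ell_Q + \ell_R$, with equality unless a $Q$-segment coincides with an $R$-segment. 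In either regime, $d(\ft) = \fw(\ft) + \ell \le (\fw(\ft_Q) + \fw(\ft_R)) + (\ell_Q + \ell_R) = d(\ft_Q) + d(\ft_R)$, which is exactly the dimension of the product cell $e_{\ft_Q} \times e_{\ft_R}$; so $\mu_{Q,R}$ does not raise cell dimension. This shows $\mu_{Q,R}$ maps the open $k$-cell $e_{\ft_Q}\times e_{\ft_R}$ into the $k$-skeleton of $F_S(\M)^\infty$, which is the definition of a cellular map (together with the observation that $\infty \wedge F_R(\M)^\infty$ and $F_Q(\M)^\infty \wedge \infty$, the other ``cells'' of the smash product, collapse to $\infty$ under $\mu_{Q,R}$, and the basepoint $0$-cell goes to the $0$-cell $\infty$).

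The main obstacle I expect is purely bookkeeping rather than conceptual: one must be careful that the characteristic map $\Phi^{\ft_Q} \times \Phi^{\ft_R}$ of the product cell, followed by $\mu_{Q,R}$, really factors (up to the closure relations of Definition \ref{defn:et}) through the characteristic map $\Phi^{\ft}$ of the target cell — i.e. that the multisimplex $\Delta^{\ft_Q} \times \Delta^{\ft_R}$ maps into $\Delta^{\ft}$ together with lower-length cells and the subspaces $\Delta_S, A'_S$. This is where one uses the second bullet in the description of $\Phi^\ft$: boundary faces of $\Delta^{\ft_Q}\times\Delta^{\ft_R}$ correspond either to collisions (hitting $\Delta_S$), to a point running into $\del\M$ (hitting $A'_S$), or to some $x$-coordinate hitting $0$ or $2$ or two $x$-coordinates colliding (landing in a cell of strictly smaller length). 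Once this is checked, cellularity is immediate. I would also remark that the same argument shows $\mu_{Q,R}$ is compatible with the cellular maps $F_S(\tau\circ f)^\infty$ via the strictly commutative square displayed just before the lemma, which is what is actually needed downstream.
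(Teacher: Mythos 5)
Your proposal is correct and follows essentially the same argument as the paper: both proofs track, for each configuration, the number of distinct vertical segments $\set{x}\times(0,1)$ supporting the points in $E$, observe that this number for the union configuration is at most the sum of the two individual counts (with collisions sent to $\infty$), and conclude that $\mu_{Q,R}$ does not raise cell dimension. Your additional remarks about the characteristic maps and the wedge basepoint are fine but not needed beyond what the paper records.
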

\begin{proof}
Let $(z_i)_{i\in Q}\in F_Q(\M)\subset F_Q(\M)^\infty$ and $(z'_j)_{j\in R}\in F_R(\M)^\infty$
be two configurations, and note that $(z_i)_{i\in Q}$ lies in the interior of some cell $e_{\ft}\subset F_Q(\M)$
of dimension $|Q|+\ell_1$, where $\ell_1$ is the cardinality of the set
\[
\set{x\in(0,2)\,|\,\exists i\in Q\, ,\, z_i\in \set{x}\times(0,1)}.
\]
Similarly $(z'_j)_{j\in R}$ lies in the interior of a cell of $F_R(\M)$ of dimension $|R|+\ell_2$,
where $\ell_2$ is the cardinality of $\set{x\in(0,2)\,|\,\exists j\in R\, ,\, z'_j\in \set{x}\times(0,1)}$.

Unless some $z_i$ coincides with some $z'_j$ (in which case $\mu_{Q,R}$ sends $((z_i)_{i\in Q},(z_j)_{j\in R})$ to $\infty$),
we have that $\mu_{Q,R}((z_i)_{i\in Q},(z_j)_{j\in R})$ lies in $F_S(\M)$, in the interior of a cell
of dimension $|S|+\ell$, where $\ell$ is the cardinality of the union of the two sets considered above:
\[
\begin{split}
 \ell& \!=\!\Big|\!\big\{x\in(0,2)\,|\,\exists i\in R\, ,\, z_i\in \set{x}\times(0,1)\big\}\!\cup\!
 \set{x\in(0,2)\,|\,\exists j\in R\, ,\, z'_j\in \set{x}\times(0,1)}\!\!\Big|\\
 &\leq \ell_1+\ell_2.  
\end{split}
\]
The claim follows. \end{proof}

\begin{remark}\label{factorisation}
A particular instance of the fact that $\mu_{Q,R}$ is cellular is the following. Let $\ft=(\ell,\uP,(),())$
be a tuple on $Q$, where we use the notation from Definition \ref{defn:et}, and let $\ft'=(0,(),\uU,\uV)$.
Then the product cell $e_{\ft}\times e_{\ft'}\subset F_Q(\M)^\infty\wedge F_R(\M)^\infty$ is mapped along
$\mu_{Q,R}$ homeomorphically onto the cell $e_{\ft''}\subset F_S(\M)^\infty$, where $\ft''=(\ell,\uP,\uU,\uV)$;
viceversa, note that each tuple $\ft''$ admits a unique ``factorisation'' as product of two tuples of the form
$(\ell,\uP,(),())$ and $(0,(),\uU,\uV)$.
\end{remark}

Let $f\in\Homeo(\M,\del\M\cup D)$: to understand the behaviour of $\tau_*\circ f_*$ on the cell $e_{\ft''}$,
it suffices to understand its behaviour on the ``factors'' $e_\ft$ and $e_\ft'$, using the fact that
$\mu_{Q,R}$ is a cellular map. At the end of Subsection \ref{subsec:themaptau} we have given a satisfactory
(for our purposes) description of the action of $\tau_*\circ f_*$ on the first factor $e_\ft$. In the next
section we will focus on the second factor, invoking Moriyama's result.

\section{The cellular chain complex of \texorpdfstring{$F_S(\M)^\infty$}{FS(M)infty}}
\label{sec5}

Fix a homeomorphism $f\in\Homeo(\M,\del\M\cup D)$ as in the previous section. In this section
we study the action of $F_S(\tau\circ f)$ on the relative cellular chain complex
\[
\tCh_*^{\cell}(F_S(\M)^\infty)\colon =\Ch^{\cell}_*(F_S(\M)^\infty,\infty),
\]
which is freely generated, as abelian group, by elements $[e_\ft]$ corresponding to
the cells $e_\ft$, which in turn correspond to all tuples on $S$.
We will simplify the notation and write $\tau f_*$ for the induced map of chain complexes
\[
 F_S(\tau\circ f)^\infty_*\colon \tCh_*^{\cell}(F_S(\M)^\infty)\to \tCh_*^{\cell}(F_S(\M)^\infty).
\]
By the discussion in Subsection \ref{subsec:themaptau}, we have that $\tau f_*[e_\ft]=[e_\ft]$
for all tuples $\ft$ of the form $(\ell,\uP,(),())$. In this section we shall focus on the \emph{opposite}
type of tuples, namely those of the form $(0,(),\uU,\uV)$.

\subsection{The one-skeleton \texorpdfstring{$X$}{X} of \texorpdfstring{$\M$}{M}}
\begin{defn}
 \label{defn:X}
 Following Moriyama's notation, we let $X\subset\M$ be the union
 \[
  X=\set{p_0}\cup\bigcup_{i=1}^g\pa{\U_i\cup\V_i}.
 \]
 Note that $X$ is a quotient of the space $2\times[0,1]$.
 We let $\sigma\colon\M\to X$ be the map induced on the quotient by the map $\tilde\sigma\colon[0,2]\times[0,1]\to 2\times[0,1]$
 given by $(x,y)\mapsto (2,y)$.
\end{defn}
 Note that $\sigma$ is a retraction of $\M$ onto $X$, and $\sigma$ is homotopic to the identity of $\M$ through
 self-maps of $\M$ that fix $X$ (and in particular fix $p_0$).

\begin{nota} \label{notationEX}
We denote by $\tCh^E_*(F_S(\M)^\infty)$ the sub-graded abelian group of $\tCh^{\cell}_*(F_S(\M)^\infty)$
spanned by all generators $[e_\ft]$ corresponding to tuples on $S$ of the form $(\ell,\uP,(),())$.

We denote by $\tCh^X_*(F_S(\M)^\infty)$ the sub-graded abelian group of $\tCh^{\cell}_*(F_S(\M)^\infty)$
spanned by all generators $[e_\ft]$ corresponding to tuples on $S$ of the form $(0,(),\uU,\uV)$. 
\end{nota}
Note that $\tCh^X_*(F_S(\M)^\infty)$ is indeed a sub-chain complex of $\tCh^{\cell}_*(F_S(\M)^\infty)$.
More precisely we can consider the subspace $X^S\subset\M^S$, and denote by $\Delta_S(X)$ and $A_S(X)$,
respectively, the intersections $\Delta_S(\M)\cap X^S$ and  $A_S(\M)\cap X^S$;
then the relative cell decomposition of the couple $(\M^S,\Delta_S\cup A_S)$ restricts to a relative cell decomposition
of the couple $(X^S,\Delta_S(X)\cup A_S(X))$, and $\tCh^X_*(F_S(\M)^\infty)$ can be canonically identified with the cellular
chain complex $\Ch^{\cell}_*(X^S,\Delta_S(X)\cup A_S(X))$.
On the contrary, $\tCh^E_*(F_S(\M)^\infty)$ is in general not a sub-chain complex.

The following lemma is essentially \cite[Lemma 4.1]{Moriyama}; we sketch its proof for completeness.
\begin{lem}
The chain complex $\tCh^X_*(F_S(\M)^\infty)$ is concentrated in degree $|S|$, so in particular
it is isomorphic to its $|S|$\textsuperscript{th} homology group
\[
H_{|S|}\pa{\tCh^X_*(F_S(\M)^\infty)}\cong H_{|S|} (X^S,\Delta_S(X)\cup A_S(X)).
\]
The inclusion of couples $(X^S,\Delta_S(X)\cup A_S(X))\subset (\M^S,\Delta_S(\M)\cup A_S(\M))$ is a homotopy equivalence,
in particular it induces an isomorphism on the $|S|$\textsuperscript{th} homology group.
\end{lem}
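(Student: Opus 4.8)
The plan is to prove the two assertions in turn. For the first — that $\tCh^X_*(F_S(\M)^\infty)$ is concentrated in degree $|S|$ — I would observe that the generators $[e_\ft]$ of this complex correspond exactly to tuples of the form $\ft=(0,(),\uU,\uV)$, i.e. to decompositions $S=U_1\sqcup\cdots\sqcup U_g\sqcup V_1\sqcup\cdots\sqcup V_g$ together with a total order on each block. For such a tuple the length is $\ell=0$, so $d(\ft)=\fw(\ft)+\ell=|S|$, and hence every cell $e_\ft$ appearing in $\tCh^X_*$ has dimension precisely $|S|$. A graded abelian group concentrated in a single degree is automatically a chain complex with zero differential, and is therefore canonically isomorphic to its homology in that degree; combined with the identification $\tCh^X_*(F_S(\M)^\infty)\cong\Ch^{\cell}_*(X^S,\Delta_S(X)\cup A_S(X))$ already noted in the text, this gives the displayed isomorphism with $H_{|S|}(X^S,\Delta_S(X)\cup A_S(X))$.

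For the second assertion I would use the retraction $\sigma\colon\M\to X$ of Definition \ref{defn:X}. Taking $S$-fold powers gives $\sigma^S\colon\M^S\to X^S$, which restricts to maps $\Delta_S(\M)\to\Delta_S(X)$ and $A_S(\M)\to A_S(X)$ (for the latter one uses $\sigma(p_0)=p_0$), so $\sigma^S$ is a map of pairs $(\M^S,\Delta_S\cup A_S)\to(X^S,\Delta_S(X)\cup A_S(X))$ one-sided inverse to the inclusion $\iota^S$. Since $\sigma$ is homotopic to $\Id_\M$ through self-maps of $\M$ fixing $X$ pointwise, by the functoriality discussion of Subsection \ref{subsec:functoriality} the homotopy $H\colon\M\times[0,1]\to\M$ induces a homotopy $H^S\colon\M^S\times[0,1]\to\M^S$ from $\iota^S\circ\sigma^S$ to $\Id_{\M^S}$; because each time-$t$ map $H_t$ fixes $X$ it maps $\del\M$ to $\del\M$ (recall $p_0\in X\subset X^S$-pointwise behaviour), hence $H^S$ restricts to homotopies on $\Delta_S$ and on $A_S$. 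Therefore $\iota^S$ is a homotopy equivalence of pairs, and in particular induces an isomorphism on $H_{|S|}$ — and indeed on all relative homology groups, consistent with the vanishing in the first part.

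The one point requiring care — and the main obstacle — is the claim that the homotopy $H$ restricts appropriately to $A_S$; that is, that each $H_t^S$ preserves the subspace $A_S$ of $S$-tuples having $p_0$ in their image. This follows provided each $H_t$ fixes $p_0\in X$, which holds since the homotopy fixes all of $X$ pointwise. (One should double-check that the straight-line homotopy $(x,y,s)\mapsto((1-s)x+2s\cdot[x\ge\text{something}],\dots)$ realizing $\sigma\simeq\Id$ does fix $X$ and preserve $\del\M$ at every stage; this is essentially the content of the remark following Definition \ref{defn:X}.) Once this is granted, the two halves of the lemma fit together: the complex is concentrated in a single degree, so "homotopy equivalence of pairs" and "isomorphism on $H_{|S|}$" are equivalent statements, and both are delivered by the retraction $\sigma$.
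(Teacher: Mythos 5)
Your proof is correct and follows essentially the same route as the paper: the degree count $d(\ft)=\fw(\ft)+\ell=|S|$ for length-zero tuples gives the concentration in degree $|S|$, and the second assertion is obtained from the deformation retraction $\sigma\colon\M\to X$, using precisely the point you flag — that the deformation fixes $p_0$ (indeed all of $X$) at all times, so that $\sigma^S$ and the homotopy preserve $A_S$ as well as $\Delta_S$. (The brief aside about $\del\M$ is unnecessary for $A_S$; only the fixing of $p_0$ matters, as you correctly conclude.)
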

\begin{proof}
A tuple $\ft$ of length 0 has degree $d(\ft)=|S|$; the degree $d(\ft)$ is the dimension of $e_\ft$, i.e. the degree in
the cellular chain complex $\tCh_*^\cell(F_S(\M)^\infty)$ of the generator $[e_\ft]$. Since 
$\tCh^X_*(F_S(\M)^\infty)$ is spanned by all generators $[e_\ft]$ corresponding to tuples of length 0, the first statement is proved.

For the second statement, note that the map $\sigma\colon \M\to X\subset\M$ is a deformation retraction; since the
deformation preserves $p_0$ at all times, the induced map $\sigma^S\colon \M^S\to X^S$ is a deformation
retraction preserving the subspaces $\Delta_S(\M)$, $A_S(\M)$, $X^S$ as well as the intersections $\Delta_S(X)$ and 
$A_S(X)$. Hence
\[
 \sigma^S\colon (\M^S,\Delta_S(\M)\cup A_S(\M))\to (\M^S,\Delta_S(X)\cup A_S(X))
\]
is an inverse homotopy equivalence to the inclusion of couples
\[
(X^S,\Delta_S(X)\cup A_S(X))\subset (\M^S,\Delta_S(\M)\cup A_S(\M)).
\]
The claim follows. \end{proof}

For $f\in\Homeo(\M,\del\M\cup D)$, note now that $(\tau\circ f)^S\colon\M^S\to\M^S$ restricts to a map $X^S\to X^S$, and the following diagram of couples of spaces
is commutative on the nose
\[
 \begin{tikzcd}
  (X^S,\Delta_S(X)\cup A_S(X))\ar[d,"(\tau\circ f)^S"]\ar[r,"\subset"] & (\M^S,\Delta_S(\M)\cup A_S(\M)) \ar[d,"(\tau\circ f)^S"]\ar[r] & (F_S(\M)^\infty,\infty)\ar[d,"F_S(\tau\circ f)^\infty"]\\
  (X^S,\Delta_S(X)\cup A_S(X))\ar[r,"\subset"] & (\M^S,\Delta_S(\M)\cup A_S(\M)) \ar[r] & (F_S(\M)^\infty,\infty).\\
 \end{tikzcd}
\]
Passing to cellular chain complexes we obtain a strictly commutative diagram
\[
 \begin{tikzcd}
 \tCh^X_*(F_S(\M)^\infty) \ar[d,"\tau f_*"]\ar[r,"\cong"] & H_{|S|}(X^S,\Delta_S(X)\cup A_S(X))\ar[r,"\cong"] \ar[d,"\tau f_*"]&\Mor_S \ar[d,"\tau f_*"]\\
 \tCh^X_*(F_S(\M)^\infty) \ar[r,"\cong"] & H_{|S|}(X^S,\Delta_S(X)\cup A_S(X))\ar[r,"\cong"] &\Mor_S.\\
 \end{tikzcd}
\]
It follows that the action of $\tau f_*$ on the subcomplex $\tCh^X_*(F_S(\M)^\infty)$ of the cellular chain complex
$\tCh_*(F_S(\M)^\infty)$ corresponds to the action of $\tau f_*$ (or just $f_*$) on the homology
group $\Mor_S$. This shows in particular that if $f_1,f_2\in \Homeo(\M,\del\M\cup D)$ are two homeomorphisms,
then the actions of $F_S(\tau\circ f_1)^\infty$ and $F_S(\tau\circ f_2)^\infty$ on the subcomplex
$\tCh^X_*(F_S(\M)^\infty)$ behave well under composition, i.e. we have an equality of maps of chain complexes
(for our purposes, abelian groups sitting in degree $|S|$)
\[
 F_S(\tau\circ f_1)^\infty\circ F_S(\tau\circ f_2)^\infty=F_S(\tau\circ f_1\circ f_2)\colon \tCh^X_*(F_S(\M)^\infty)\to \tCh^X_*(F_S(\M)^\infty).
\]
This last remark could be more easily deduced from the fact that $\tau\circ f_1\circ \tau\circ f_2$
and $\tau\circ f_1\circ f_2$ are homotopic maps $(\M,p_0)\to (\M,p_0)$.
The previous discussion gives the following lemma.
\begin{lem}
 \label{lem:XMoriso}
 For $f$ ranging in $\Homeo(\M,\del\M\cup D)$,
 the cellular chain maps $\tau f_*$ assemble into an action of $\Gamma$
 on the abelian group $\tCh^X_*(F_S(\M)^\infty)$. The $\Gamma$-representations $\tCh^X_*(F_S(\M)^\infty)$ and $\Mor_S$
 are isomorphic.
\end{lem}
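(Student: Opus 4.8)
The plan is to collect the commutative diagrams and isomorphisms established in the discussion preceding the lemma. The first thing I would check is that the assignment $f\mapsto \tau f_*$ depends only on the class of $f$ in $\Gamma=\pi_0(\Homeo(\M,\del\M\cup D))$. If $f_0$ and $f_1$ represent the same such class, then, as noted above, $\tau\circ f_0$ and $\tau\circ f_1$ are homotopic as maps $(\M,p_0)\to(\M,p_0)$; by the functoriality of Subsection~\ref{subsec:functoriality} the induced self-maps $F_S(\tau\circ f_0)^\infty$ and $F_S(\tau\circ f_1)^\infty$ of $F_S(\M)^\infty$ are then homotopic, hence induce the same map on cellular homology $H_*^{\cell}(F_S(\M)^\infty,\infty)$. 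Restricting to $\tCh^X_*(F_S(\M)^\infty)$, which by the preceding lemma is concentrated in degree $|S|$ and therefore agrees with its own homology, we get that the chain maps $\tau f_*$ attached to $f_0$ and to $f_1$ coincide on $\tCh^X_*$.

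Next I would verify that the resulting $\Gamma$-indexed family of chain maps is a group action on $\tCh^X_*(F_S(\M)^\infty)$: the identity of $\M$ induces the identity chain map, and the already-established equality $F_S(\tau\circ f_1)^\infty\circ F_S(\tau\circ f_2)^\infty=F_S(\tau\circ f_1\circ f_2)^\infty$ on $\tCh^X_*$ — equivalently, the homotopy $\tau\circ f_1\circ\tau\circ f_2\simeq\tau\circ f_1\circ f_2$ rel $p_0$ — gives multiplicativity. This proves the first assertion.

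For the identification of representations, I would invoke the strictly commutative diagram displayed just above the lemma, whose rows are the isomorphisms
\[
\tCh^X_*(F_S(\M)^\infty)\;\overset{\cong}{\longrightarrow}\;H_{|S|}(X^S,\Delta_S(X)\cup A_S(X))\;\overset{\cong}{\longrightarrow}\;\Mor_S
\]
of the preceding lemma and whose columns are the maps $\tau f_*$ on the three terms. Since the diagram commutes for every $f\in\Homeo(\M,\del\M\cup D)$, the composite isomorphism $\tCh^X_*(F_S(\M)^\infty)\to\Mor_S$ intertwines the two $\Gamma$-actions, and hence $\tCh^X_*(F_S(\M)^\infty)\cong\Mor_S$ as $\Gamma$-representations.

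The step I expect to take the most care is the first one: showing that $\tau f_*$ descends to $\Gamma$. It relies on the fact that $\tau$ is homotopic to $\Id_\M$ through maps preserving $\del\M$ and fixing $p_0$, so that $\tau\circ f\simeq f$ rel $p_0$, together with the observation that $\tCh^X_*(F_S(\M)^\infty)$ occupies a single homological degree, which is what allows one to pass freely between the chain-level and the homology-level maps. Once this is in place, the remaining steps are just a transport of structure along isomorphisms already constructed.
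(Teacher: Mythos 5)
Your overall route --- transporting the structure along the isomorphisms $\tCh^X_*(F_S(\M)^\infty)\cong H_{|S|}(X^S,\Delta_S(X)\cup A_S(X))\cong\Mor_S$ from the strictly commutative diagram preceding the lemma --- is the paper's, and your treatment of multiplicativity and of the isomorphism of representations is fine. However, your first step contains an invalid inference. From the homotopy $F_S(\tau\circ f_0)^\infty\simeq F_S(\tau\circ f_1)^\infty$ you conclude that the two maps agree on $H_*^{\cell}(F_S(\M)^\infty,\infty)$ and then ``restrict'' to get equality of the chain maps on $\tCh^X_*$. This does not follow. The group $\tCh^X_{|S|}$ is the entire bottom degree of the ambient cellular chain complex: it consists of cycles and \emph{surjects} onto $H_{|S|}(F_S(\M)^\infty,\infty)$, but the kernel of that surjection --- the image of the differential from degree $|S|+1$, which is nonzero in general --- is invisible to the ambient homology. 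Two chain-homotopic cellular maps differ on the bottom degree by $\partial\circ h$ for a chain homotopy $h$, and this need not vanish. Concentration of $\tCh^X_*$ in a single degree identifies it with \emph{its own} homology, namely $H_{|S|}(X^S,\Delta_S(X)\cup A_S(X))$, not with the degree-$|S|$ homology of $(F_S(\M)^\infty,\infty)$.

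The repair is exactly what the paper does, and what you already invoke in your third paragraph: the strictly commutative diagram identifies the restriction of $\tau f_*$ to $\tCh^X_*$ with the map induced on $H_{|S|}(X^S,\Delta_S(X)\cup A_S(X))\cong\Mor_S$, i.e.\ with Moriyama's action of $f_*$ on $\Mor_S$ (using that $\tau$ is homotopic to $\Id_\M$ through maps preserving $\del\M$ and fixing $p_0$). A homotopy $f_0\simeq f_1$ in $\Homeo(\M,\del\M\cup D)$, composed with $\tau$, preserves $X$ and $p_0$ at all times, hence induces a homotopy of self-maps of the pair $(X^S,\Delta_S(X)\cup A_S(X))$; homotopy invariance of $H_{|S|}$ of \emph{this} pair, which coincides with the chain complex $\tCh^X_*$, is what yields equality of chain maps. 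Well-definedness on $\Gamma$ and multiplicativity are then inherited from Moriyama's $\Gamma$-action on $\Mor_S$. With this substitution the rest of your argument goes through.
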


\subsection{Cohomological portion of Theorem \ref{thm:main}}
Let $S$ be a finite set. Lemma \ref{lem:mucellular} states that the product map  \[\mu_{Q,R}\colon F_Q(\M)^\infty\wedge F_R(\M)^\infty\to F_S(\M)^\infty\] is cellular and hence it induces a map on cellular chains. By Remark \ref{factorisation}, every cell in $F_S(\M)^\infty$ decomposes uniquely as a product of a cell of the form $(0,(),\uU,\uV)$ and a cell of the form $(\ell,\uP,(),())$. In the language of Notation \ref{notationEX}, this gives an isomorphism of graded abelian groups
\[
 \tCh_*^{\cell}(F_S(\M)^\infty)\cong \bigoplus_{S=Q\sqcup R} \tCh^E_*(F_Q(\M)^\infty)\otimes \tCh^X_*(F_R(\M)^\infty).
\]
For $f\in\Homeo(\M,\del\M\cup D)$, the action of $\tau f_*$ on the left hand side corresponds to the direct sum
of the tensor product of the identity action on each factor $\tCh^E_*(F_Q(\M)^\infty)$ with the Moriyama action
of $\tau f_*$ on $\tCh^X_*(F_R(\M)^\infty)$. By the discussion of the previous subsection we obtain the following proposition.
\begin{prop}
\label{prop:main}
Let $S$ be a finite set. There is an isomorphism of graded $\Gamma$-representations
\[
 \tCh_*^{\cell}(F_S(\M)^\infty)\cong \bigoplus_{S=Q\sqcup R} \tCh^E_*(F_Q(\M)^\infty)\otimes\Mor_R.
\] 
Here $Q$ and $R$ range among all couples of (possibly empty) subsets of $S$ whose disjoint union is $S$. The action
of $\Gamma$ on the right hand side preserves direct summands, is diagonal on each tensor product, is the identity
action on each tensor factor $\tCh^E_*(F_Q(\M)^\infty)$ and is the Moriyama action on each tensor factor $\Mor_R$.
\end{prop}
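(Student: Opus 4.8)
The plan is to assemble Proposition~\ref{prop:main} by combining the cellular factorization of Remark~\ref{factorisation} with the $\Gamma$-equivariant identification of $\tCh^X_*$ with $\Mor_R$ from Lemma~\ref{lem:XMoriso}, checking at each stage that the maps $\tau f_*$ are compatible with the decomposition. First I would establish the underlying isomorphism of graded abelian groups. By Definition~\ref{defn:et}, a tuple $\ft''$ on $S$ is the data of a partition of $S$ together with total orders; Remark~\ref{factorisation} says precisely that specifying $\ft''=(\ell,\uP,\uU,\uV)$ is equivalent to specifying a bipartition $S=Q\sqcup R$ (with $Q=\bigsqcup P_i$ and $R=\bigsqcup U_i\sqcup\bigsqcup V_i$), a tuple $\ft=(\ell,\uP,(),())$ on $Q$, and a tuple $\ft'=(0,(),\uU,\uV)$ on $R$. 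Since $d(\ft'')=d(\ft)+d(\ft')-|R|$... more carefully, $\fw(\ft'')=|S|=|Q|+|R|$ and $\ell(\ft'')=\ell(\ft)$, so $d(\ft'')=|S|+\ell=\big(|Q|+\ell\big)+|R|=d(\ft)+d(\ft')$ using $d(\ft')=|R|$; hence the basis element $[e_{\ft''}]$ lands in the correct graded piece of $\tCh^E_*(F_Q(\M)^\infty)\otimes\tCh^X_*(F_R(\M)^\infty)$, matching $[e_\ft]\otimes[e_{\ft'}]$. Summing over bipartitions $S=Q\sqcup R$ gives the stated isomorphism of graded abelian groups, and Lemma~\ref{lem:mucellular} guarantees this isomorphism is realized by the cellular map $\mu_{Q,R}$ on chains.

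Next I would verify the $\Gamma$-equivariance and the description of the action on each summand. The key point is the strictly commutative square in Subsection~\ref{subsec:superposition}: for $\mathfrak f=\tau\circ f$ with $f\in\Homeo(\M,\del\M\cup D)$, the maps $\mu_{Q,R}$ intertwine $F_Q(\mathfrak f)^\infty\wedge F_R(\mathfrak f)^\infty$ with $F_S(\mathfrak f)^\infty$. Therefore, under the isomorphism above, $\tau f_*$ on $\tCh^{\cell}_*(F_S(\M)^\infty)$ corresponds to the direct sum over bipartitions of $(\tau f_*)|_{\tCh^E_*(F_Q(\M)^\infty)}\otimes(\tau f_*)|_{\tCh^X_*(F_R(\M)^\infty)}$. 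The first tensor factor is the identity: this is exactly the statement established at the end of Subsection~\ref{subsec:themaptau}, namely that $\tau f_*[e_\ft]=[e_\ft]$ for tuples of the form $(\ell,\uP,(),())$, since $\tau\circ f$ maps $e_\ft^{\mathring D}$ homeomorphically onto $e_\ft$ respecting all the combinatorial data. The second tensor factor, by Lemma~\ref{lem:XMoriso}, is the Moriyama action of $\tau f_*$ (equivalently $f_*$) on $\Mor_R=H_{|R|}(\M^R,\Delta_R\cup A_R)$. Substituting $\tCh^X_*(F_R(\M)^\infty)\cong\Mor_R$ as $\Gamma$-representations then yields the proposition, with the action preserving summands, diagonal on each tensor product, trivial on $\tCh^E_*(F_Q(\M)^\infty)$, and Moriyama on $\Mor_R$.

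The one subtlety I would be careful about—and which I expect to be the main obstacle—is ensuring that the three uses of functoriality (the factorization of cells, the triviality on $E$-type tuples, and the $X$-to-$\Mor$ identification) are all equivariant for \emph{the same} model of the $\Gamma$-action, namely $\tau f_*$ rather than $f_*$ directly. Genuine homeomorphisms $F_S(f)^\infty$ need not be cellular, so one cannot work with $f_*$ on cellular chains; the role of $\tau$ is to repair this, but then one must check that the assignment $f\mapsto\tau f_*$ is a genuine group action on each of the pieces. On $\tCh^X_*$ this is Lemma~\ref{lem:XMoriso} (where it follows from $\tau\circ f_1\circ\tau\circ f_2\simeq\tau\circ f_1\circ f_2$ as based maps $(\M,p_0)\to(\M,p_0)$); on $\tCh^E_*$ it is trivial since the action is the identity; and on the tensor product the two combine because $\mu_{Q,R}$ is strictly $\tau f_*$-equivariant. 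The remaining verification that the isomorphism is natural in $f$ is then a formal consequence of the strict commutativity of the $\mu_{Q,R}$-square together with Lemma~\ref{lem:XMoriso}, so no further homotopy-coherence bookkeeping is needed.
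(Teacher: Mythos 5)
Your proposal is correct and follows essentially the same route as the paper: the unique factorization of cells from Remark~\ref{factorisation} together with the cellularity of $\mu_{Q,R}$ (Lemma~\ref{lem:mucellular}) gives the isomorphism of graded abelian groups, and equivariance is read off from the strict commutativity of the $\mu_{Q,R}$-square, the identity action on $E$-type cells from the end of Subsection~\ref{subsec:themaptau}, and Lemma~\ref{lem:XMoriso} for the Moriyama action. Your degree check $d(\ft'')=d(\ft)+d(\ft')$ and your remarks on why $f\mapsto\tau f_*$ is a genuine action are correct and, if anything, more explicit than the paper's own argument.
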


In particular, if $f\in\Homeo(\M,\del\M\cup D)$ represents an element in $\cJ(i)$,
then by Moriyama's result (Theorem \ref{thm:Moriyama})
we have that $\tau f_*$ restricts to the identity on those direct summands in the formula
of Proposition \ref{prop:main} that depend on a subset $R\subset S$ with $|R|\le i$. The direct sum
\[
  \bigoplus_{{S=Q\sqcup R}\, ,\, {|R|\le i}} \tCh^E_*(F_Q(\M)^\infty)\otimes\Mor_R
\]
contains the degree $\ge 2|S|-i$ part of the chain complex $ \tCh_*^{\cell}(F_S(\M)^\infty)$. It follows that
$\tau f_*$, hence $f_*$, are the identity on the homology group
\[
 H_{2|S|-i} \tCh_*^{\cell}(F_S(\M)^\infty).
\]
By Poincare-Lefschetz duality and the comparison between cellular and singular homology (see Equation \ref{DiagramPLD}), this implies that $f_*$ acts as the identity on $H^i(F_S(\M))$,
which is the cohomological portion of Theorem \ref{thm:main}.

In the case $i=|S|$ we can say a little more: for all $f\in \Homeo(\M,\del\M\cup U)$ representing an element
in $\cJ(|S|)$, we have that $\tau f_*$ is the identity on the entire chain complex $\tCh_*^{\cell}(F_S(\M)^\infty)$;
using the quasi-isomorphisms of Subsection \ref{subsec:naturalquasiiso}, we obtain that
\[
f_*\colon \Ch^*_{\sing}(F_S(\M))\to\Ch^*_{\sing}(F_S(\M))
\]
is chain homotopic to the identity.
In other words, $\cJ(|S|)$ acts homotopy-trivially on the cochain complex $\Ch^*_{\sing}(F_S(\M))$.

\subsection{Homological portion of Theorem \ref{thm:main}}
By Poincare-Lefschetz duality, and the comparison between cellular and singular homology,
$\Ch_*^{\sing}(F_S(\M))$ can be replaced by the reduced cellular \emph{cochain} complex $\Ch^*_{\cell}(F_S(\M)^\infty,\infty)$,
which is just the dual of the chain complex
$\Ch_*^{\cell}(F_S(\M)^\infty,\infty)$: note indeed that the latter chain complex is
finitely generated and free abelian. The rest of the arguments are replaced by their duals. We therefore deduce the homological portion of Theorem \ref{thm:main}.

\subsection{An analogue of Theorem \ref{thm:main} for labeled configuration spaces}
The same strategy of proof generalises to the setting of labeled configuration spaces.
Let $Y$ be a topological space with an action of the group $\fS_n$. We consider the space
$F_n(\mathring{\M})\times_{\fS_n}Y$, i.e. the quotient of $F_n(\mathring{\M})\times Y$ by the diagonal $\fS_n$-action.
The most prominent example is when $Y=Z^n$ for some topological space $Z$, with $\fS_n$ acting on $Y$ by
permuting the $n$ coordinates of $Z^n$: in this case $F_n(\mathring{\M})\times_{\fS_n}Y$ is also known as the
\emph{unordered configuration spaces of $n$ points in $\M$ with labels in $Z$} and denoted $C_n(\M;Z)$.

The topological group $\Homeo(\M,\del\M)$ acts on $F_n(\mathring{\M})\times Y$: naturally on the first factor,
trivially on the second. This action induces an action of $\Homeo(\M,\del\M)$ on $F_n(\mathring{\M})\times_{\fS_n}Y$;
the latter descends to a homotopy action of $\Gamma$ on the chain complex
$\Ch_*^{\sing}(F_n(\mathring{\M})\times_{\fS_n}Y)$.
We will prove the following corollary.
\begin{cor}
 \label{cor:main}
The action of $\cJ(n)\subset\Gamma$ on  $\Ch_*^{\sing}(F_n(\mathring{\M})\times_{\fS_n}Y)$ is homotopy-trivial.
In particular $\cJ(n)$ acts trivially on $H_*(F_n(\mathring{\M})\times_{\fS_n}Y)$.
\end{cor}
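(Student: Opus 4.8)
The plan is to reduce the statement for $F_n(\mathring{\M})\times_{\fS_n}Y$ to the already-established homotopy-triviality of the $\cJ(n)$-action on the cochain complex $\Ch^*_{\sing}(F_n(\mathring{\M}))$, equivalently on the finitely generated free chain complex $\tCh_*^{\cell}(F_n(\M)^\infty)$. The point is that all the work is already done at the level of $F_n(\mathring{\M})$: at the end of Subsection \ref{subsec:themaptau} and in the discussion following Proposition \ref{prop:main} we produced, for every $f\in\Homeo(\M,\del\M\cup D)$ representing a class in $\cJ(n)$, a cellular self-map $F_n(\tau\circ f)^\infty$ of $F_n(\M)^\infty$ inducing the \emph{identity} on $\tCh_*^{\cell}(F_n(\M)^\infty)$, together with a chain homotopy (canonical up to higher homotopy) from $f_*$ to $F_n(\tau\circ f)^\infty_*$ on singular chains, coming from the fact that $\tau\circ f\simeq f$ rel nothing (through maps preserving $\del\M$). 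So the plan is: first enlarge this picture from $F_n(\mathring{\M})$ to $F_n(\mathring{\M})\times Y$, then descend along the free $\fS_n$-quotient.

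Concretely, the first step is to observe that $\Homeo(\M,\del\M)$ acts on $F_n(\mathring{\M})\times Y$ only through the first factor, and this action commutes with the $\fS_n$-action (which permutes the $n$ configuration points and the labels simultaneously, but does not touch $\M$). Hence for $f\in\Homeo(\M,\del\M\cup D)$ we get a self-map $(F_n(f)\times\Id_Y)$ of $F_n(\mathring{\M})\times Y$, and the homotopy $\tau$-trick applies verbatim with $Y$ carried along as a silent spectator: $(F_n(\tau\circ f)^\infty\wedge\Id_{Y_+})$ is cellular with respect to the product cell structure on $F_n(\M)^\infty\wedge Y_+$ (using a CW-approximation of $Y$), and on cellular chains it acts as $(\text{id on }\tCh_*^{\cell}(F_n(\M)^\infty))\otimes\Id_{\Ch_*(Y)}$. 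For $f$ in $\cJ(n)$ this is literally the identity by the $i=|S|$ case of Proposition \ref{prop:main}. Thus $\cJ(n)$ acts homotopy-trivially on the (co)chain complex of $F_n(\mathring{\M})\times Y$, compatibly with composition in $\Gamma$ up to coherent homotopy — the bookkeeping being the same as in Lemma \ref{lem:XMoriso} and the discussion preceding it.

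The second step is the descent to the quotient by $\fS_n$. Since the $\fS_n$-action on $F_n(\mathring{\M})\times Y$ is free (it is free already on $F_n(\mathring{\M})$), the quotient map $F_n(\mathring{\M})\times Y\to F_n(\mathring{\M})\times_{\fS_n}Y$ is a covering, and $\Ch_*^{\sing}(F_n(\mathring{\M})\times_{\fS_n}Y)$ is naturally the $\fS_n$-coinvariants (equivalently, with $\Z[\fS_n]$-module structure, $\Z\otimes_{\Z[\fS_n]}\Ch_*^{\sing}(F_n(\mathring{\M})\times Y)$) of the $\Gamma\times\fS_n$-equivariant singular chain complex upstairs. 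Because the $\Gamma$- and $\fS_n$-actions commute, the $\cJ(n)$-equivariant chain homotopy $H$ from $f_*$ to $\Id$ constructed upstairs can be averaged / made $\fS_n$-equivariant (or: one simply chooses the homotopy to come from the $\fS_n$-equivariant homotopy $\tau\circ f\simeq f$ on $\M$, which automatically yields an $\fS_n$-equivariant homotopy on $F_n(\mathring{\M})\times Y$), and hence it passes to the coinvariants, giving a chain homotopy from $f_*$ to $\Id$ on $\Ch_*^{\sing}(F_n(\mathring{\M})\times_{\fS_n}Y)$. Taking homology gives the triviality of the $\cJ(n)$-action on $H_*(F_n(\mathring{\M})\times_{\fS_n}Y)$.

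The main obstacle is purely one of equivariance and coherence: one must ensure that the chain homotopies produced in Subsection \ref{subsec:themaptau} (a) can be taken $\fS_n$-equivariantly, so that they descend to the quotient, and (b) assemble coherently over all of $\cJ(n)$, so that we genuinely get a homotopy action of $\Gamma$ (and not just that each individual element acts homotopy-trivially). Point (a) is handled by noticing that the homotopy $\tau_t\colon\M\to\M$ from $\Id_\M$ to $\tau$ is a homotopy of $\M$ itself, so the induced homotopy on $F_n(\mathring{\M})\times Y$ is diagonal in the configuration points and thus manifestly $\fS_n$-equivariant; composing with $f$ (also $\fS_n$-equivariant, being a single homeomorphism of $\M$ applied diagonally) keeps equivariance. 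Point (b) is exactly the content already recorded in Lemma \ref{lem:XMoriso} and the commuting-square discussion before it, now tensored with $\Id_Y$ and pushed to coinvariants; no new idea is needed. Everything else is routine once these compatibilities are in place.
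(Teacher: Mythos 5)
Your overall architecture matches the paper's: establish that for $f$ representing a class in $\cJ(n)$ the map $F_n(f)_*$ is \emph{$\fS_n$-equivariantly} chain homotopic to the identity, carry the label space $Y$ along, and descend to the $\fS_n$-quotient. Your descent step is fine and is a legitimate variant of what the paper does: since $\fS_n$ acts freely and properly discontinuously, the quotient map is a covering, singular chains of the quotient are the $\fS_n$-coinvariants of singular chains upstairs, and an equivariant chain homotopy passes through the (additive) coinvariants functor. The paper instead packages this via the Eilenberg--Zilber quasi-isomorphism $\Ch_*^{\sing}(F_n(\mathring{\M}))\otimes_{\fS_n}\Ch_*^{\sing}(Y)\to \Ch_*^{\sing}(F_n(\mathring{\M})\times_{\fS_n}Y)$; either formulation works.

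The genuine gap is in your first step, where you transport the chain homotopy from $F_n(\mathring{\M})$ to $F_n(\mathring{\M})\times Y$. Both mechanisms you offer fail. First, there is no space-level homotopy doing the job: $\tau$ is not injective, so neither $\tau$ nor the homotopy $\tau_t$ from $\Id_\M$ to $\tau$ induces self-maps of $F_n(\mathring{\M})$ (the paper is explicit about this in Subsection \ref{subsec:themaptau}); the homotopy $\tau\circ f\simeq f$ only lives on the compactification $F_n(\M)^\infty$, and even there it relates $F_n(f)^\infty$ to $F_n(\tau\circ f)^\infty$, not to the identity. The chain homotopy $F_n(f)_*\simeq\Id$ on $\Ch_*^{\sing}(F_n(\mathring{\M}))$ is a purely algebraic one, obtained by transferring ``identity on $\tCh_*^{\cell}(F_n(\M)^\infty)$'' backwards through Poincar\'e--Lefschetz duality and the cellular--singular comparison, then dualizing; it is not induced by a homotopy of spaces, so it cannot be ``crossed with $\Id_Y$'' at the space level. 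Second, the cellular model $F_n(\M)^\infty\wedge Y_+$ does not compute the singular (co)chains of $F_n(\mathring{\M})\times Y$: the identification of $\tCh_*^{\cell}(F_n(\M)^\infty)$ with cochains on $F_n(\mathring{\M})$ rests on $F_n(\mathring{\M})$ being an open $2n$-manifold, and $F_n(\mathring{\M})\times Y$ is not a manifold for general $Y$, so no duality is available to bring you back. The missing ingredient is precisely the Eilenberg--Zilber map: once you have the $\fS_n$-equivariant chain homotopy on $\Ch_*^{\sing}(F_n(\mathring{\M}))$ alone, you must tensor it with $\Id_{\Ch_*^{\sing}(Y)}$ and compare with $\Ch_*^{\sing}(F_n(\mathring{\M})\times Y)$ (or directly with the quotient, as the paper does) via $\mathrm{EZ}$, which intertwines the two actions on the nose and is a quasi-isomorphism between complexes on which ``chain homotopic to the identity'' can be transferred.
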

\begin{proof}
 We first observe that $\Ch_*^{\sing}(F_n(\mathring{\M}))$ can be regarded as a $\fS_n$-equivariant chain complex.
 All arguments leading to Theorem \ref{thm:main} hold in the setting of $\fS_n$-equivariant spaces and chain complexes:
 in particular if $f\in\Homeo(\M,\del\M)$ represents a mapping class in $\cJ(n)\subset\Gamma$, then
 $F_n(f)_*\colon \Ch_*^{\sing}(F_n(\mathring{\M}))\to \Ch_*^{\sing}(F_n(\mathring{\M}))$ is $\fS_n$-equivariantly chain homotopic to the identity of
 $\Ch_*^{\sing}(F_n(\mathring{\M}))$.
 
 Consider now the Eilenberg-Zilber map
 \[
  \mathrm{EZ}\colon \Ch_*^{\sing}(F_n(\mathring{\M}))\otimes_{\fS_n}\Ch_*^{\sing}(Y) \to \Ch_*^{\sing}(F_n(\mathring{\M})\times_{\fS_n}Y),
 \]
 it is a quasi-isomorphism of chain complexes; moreover if $f\in\Homeo(\M,\del\M)$
 is any homeomorphism, the action of $(F_n(f)\times_{\fS_n}\Id_Y)_*$ on the right hand side is compatible with the action
 of $F_n(f)_*\otimes_{\fS_n}\Id_{\Ch_*^{\sing}(Y) }$ on the left hand side, i.e. the following diagram of chain complexes
 commutes (on the nose)
 \[
  \begin{tikzcd}
   \Ch_*^{\sing}(F_n(\mathring{\M}))\otimes_{\fS_n}\Ch_*^{\sing}(Y)\ar[d,"F_n(f)_*\otimes_{\fS_n}\Id_{\Ch_*^{\sing}(Y) }"]\ar[r,"\mathrm{EZ}"]
   & \Ch_*^{\sing}(F_n(\mathring{\M})\times_{\fS_n}Y)\ar[d,"(F_n(f)\times_{\fS_n}\Id_Y)_*"]\\
   \Ch_*^{\sing}(F_n(\mathring{\M}))\otimes_{\fS_n}\Ch_*^{\sing}(Y)\ar[r,"\mathrm{EZ}"]
   & \Ch_*^{\sing}(F_n(\mathring{\M})\times_{\fS_n}Y).
  \end{tikzcd}
 \]
 Suppose now that $f$ represents a mapping class in $\cJ(n)\subset\Gamma$: then the left vertical map is
 chain homotopic to the identity of $\Ch_*^{\sing}(F_n(\mathring{\M}))\otimes_{\fS_n}\Ch_*^{\sing}(Y)$,
 because $F_n(f)_*$ is \emph{$\fS_n$-equivariantly} chain homotopic to the identity of
 $\Ch_*^{\sing}(F_n(\mathring{\M}))$. Since $\mathrm{EZ}$ is a quasi-isomorphism of chain complexes, it follows
 that the right vertical map is also chain homotopic to the identity of $\Ch_*^{\sing}(F_n(\mathring{\M})\times_{\fS_n}Y)$. 
 \end{proof}

\section{Sharpness of Theorem \ref{thm:main} and examples}
\label{sec:sharpness}

In this section we present some examples testing the sharpness of our main theorem. We first
show that $\cJ(2)$ acts non-trivially on $H_3(F_3(\M))$ if $\M$ has genus $\ge3$ and, with a finer
argument, if $\M$ has genus 2.

We then observe that, if $\M$ has genus $\ge2$, the subgroup of $\Gamma=\Gamma(\M,\del\M)$
generated by the boundary Dehn twist is not contained in $\cJ(3)$, and nevertheless this subgroup
acts trivially on $H_i(F_n(\M))$ for all $n,i\ge0$. In particular, $\cJ(i)$ is not generally equal to the kernel of the action of $\Gamma$ on $H_i(F_n(\M))$.

\subsection{\texorpdfstring{$\cJ(2)$}{J(2)} does not act trivially on \texorpdfstring{$H_3(F_3(\mathring{\M}))$}{H3(F3(M))} for \texorpdfstring{$g\ge3$}{g>=3}}
\begin{prop}
\label{H3F3}
Let $g \geq 3$. The group $\cJ(2)$ acts nontrivially on $H_3(F_3(\mathring{\M}))$. 
\end{prop}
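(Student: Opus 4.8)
The plan is to produce an explicit mapping class $f \in \cJ(2)$ and an explicit homology class $\alpha \in H_3(F_3(\mathring{\M}))$ with $f_*\alpha \neq \alpha$, using the structural decomposition of Proposition~\ref{prop:main} to reduce the computation to Moriyama's representation $\Mor_R$ for small $|R|$. First I would recall that since $g \geq 3$ and $|S| = 3$, by Proposition~\ref{prop:main} the cellular chain complex $\tCh_*^{\cell}(F_3(\M)^\infty)$ splits as a direct sum of pieces $\tCh^E_*(F_Q(\M)^\infty) \otimes \Mor_R$ over bipartitions $S = Q \sqcup R$; the action of $\cJ(2)$ is the identity on all summands with $|R| \le 2$ and acts via the Moriyama action on the summands with $|R| = 3$, i.e.\ on $\tCh^E_*(F_\emptyset(\M)^\infty) \otimes \Mor_S = \Mor_S$. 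Dualizing (Poincar\'e--Lefschetz), $H_3(F_3(\mathring{\M})) \cong H^{2\cdot 3 - 3}(F_3(\M)) = H^3(F_3(\M))$, and the relevant piece of the cochain complex on which $\cJ(2)$ can act nontrivially is the linear dual of $\Mor_S$, sitting in cochain degree $3 = 2|S| - |S|$, which is the top cochain degree where $\tCh^X$-type cells live. Since $\tCh^X_*(F_3(\M)^\infty)$ is concentrated in degree $3$, the group $\Mor_S$ contributes a genuine subquotient (in fact a direct summand of the chain group, hence a natural map from $H_3$) on which $\cJ(2)$ acts through its Moriyama action.

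Next I would invoke Moriyama's Theorem~\ref{thm:Moriyama}: the kernel of the $\Gamma$-action on $\Mor_3 = H_3(\M^3, \Delta_3 \cup A_3)$ is \emph{precisely} $\cJ(3)$, not $\cJ(2)$. Hence there exists $f \in \cJ(2) \setminus \cJ(3)$ acting nontrivially on $\Mor_3$. The key remaining point is to upgrade ``acts nontrivially on the summand $\Mor_3$ of the chain group'' to ``acts nontrivially on $H_3(F_3(\mathring{\M}))$''. For this I would check that the composite
\[
H_3\big(\tCh_*^{\cell}(F_3(\M)^\infty)\big) \longrightarrow \big(\text{the $\Mor_3$-summand in degree }3\big)^{\!\vee}\text{-part}
\]
is surjective, equivalently that on the cohomology side the dual class pulled from $\Mor_3^\vee$ survives in $H^3(F_3(\M))$. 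The cleanest way: the summand $\Mor_S$ in $\tCh_3^{\cell}(F_3(\M)^\infty)$ consists of $3$-dimensional cells $e_\ft$ with $\ft = (0,(),\uU,\uV)$; the differential \emph{out} of degree $3$ restricted to this summand lands in the $\tCh^X$-part of degree $2$, which is zero, so these cells are all cycles. The differential \emph{into} degree $3$ from degree $4$ cells can hit them, but a boundary-matrix / dimension count (using that $\Mor_3$ is free abelian of rank $(2g)(2g+1)(2g+2)$, and that the degree-$4$ cells mapping onto $\Mor_S$-cells are controlled by Moriyama's computation that $\tCh^X$ is concentrated in degree $|S|$) shows the $\Mor_3$-summand injects into $H_3$. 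Concretely, I would cite the exact naturality diagram preceding Lemma~\ref{lem:XMoriso} and observe that the inclusion $\tCh^X_*(F_3(\M)^\infty) \hookrightarrow \tCh_*^{\cell}(F_3(\M)^\infty)$ is $\Gamma$-equivariant and, being a subcomplex concentrated in top-relevant degree, induces a $\Gamma$-equivariant map $\Mor_3 \cong H_3(\tCh^X) \to H_3(F_3(\M)^\infty, \infty) \cong H^3(F_3(\M))$; I would then argue this map is split injective (or at least nonzero on the element moved by $f$) by exhibiting the dual cochain explicitly or by a rank argument.

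The main obstacle I anticipate is precisely this last step: confirming that the $\cJ(2)$-nontrivial class in $\Mor_3$ is not killed when passing to homology of the full complex $\tCh_*^{\cell}(F_3(\M)^\infty)$ — i.e.\ ruling out that the relevant element of $\Mor_3$ is a boundary of a $4$-cell or fails to be a cycle after including the $E$-cells. I would handle this either by the structural observation that in \emph{top} relative degree $2|S|=6$ there is no room above, combined with tracking which degree-$4$ summands $\tCh^E_1(F_Q) \otimes \Mor_R$ (with $|Q|+|R|=3$) can map into the pure-$\Mor_3$ summand — and noting the differential respects the $R$-filtration in a way that the $|R|=3$ part is not a boundary — or, more robustly, by borrowing the explicit genus-$\ge 3$ computation already referenced for Propositions~\ref{H3F3} and~\ref{H3F3g2}: take $f$ to be a commutator of Dehn twists realizing a specific element of $\cJ(2)\setminus\cJ(3)$ detected by the Johnson homomorphism $\tau_2$, and pair it against an explicit $3$-cycle built from a configuration with one point on each of two handles and one point free, using the genus-$3$ hypothesis to guarantee enough independent handles that the relevant bracket in $\gamma_2\pi/\gamma_3\pi$ is nonzero. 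The genus-$2$ refinement (Proposition~\ref{H3F3g2}) presumably requires the finer argument because with only two handles the generic bracket element may vanish and one must choose the test class more carefully.
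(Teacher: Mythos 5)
Your strategy --- reduce to Moriyama's theorem via the decomposition of Proposition \ref{prop:main}, using that $\Mor_3$ is the bottom chain group of $\tCh^{\cell}_*(F_3(\M)^\infty)$ --- is genuinely different from the paper's proof, which never invokes Moriyama or the cell structure: the paper exhibits an explicit separating twist $T_\gamma\in\cJ(2)$ and an explicit $3$-cycle $a\bullet b$ (a genus-$2$ surface of ``planetary systems'' in $F_2$ of a genus-$2$ subsurface, crossed with a loop of the third particle), shows $T_\gamma(a\bullet b)-(a\bullet b)$ is an embedded $3$-torus, and detects it by a transverse intersection number with the submanifold of configurations ordered along an arc. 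Your route, however, has a gap at exactly the step you flag, and the gap is not repairable in the generality you propose. The natural $\Gamma$-equivariant map $\Mor_3=\tCh^{\cell}_3(F_3(\M)^\infty)\twoheadrightarrow H_3\big(\tCh^{\cell}_*(F_3(\M)^\infty)\big)\cong H^3(F_3(\mathring{\M}))$ is surjective but \emph{not} injective, so ``$f\in\cJ(2)\setminus\cJ(3)$ acts nontrivially on $\Mor_3$'' does not imply ``$f$ acts nontrivially on $H^3$ (hence $H_3$).'' The paper's own Subsection \ref{subsec:bDehnJ2} is a counterexample to that inference: $T_{\del\M}$ lies in $\cJ(2)\setminus\cJ(3)$, hence by Theorem \ref{thm:Moriyama} acts nontrivially on $\Mor_3$, yet it acts trivially on every $H_i(F_n(\mathring{\M}))$ because it is supported in a collar of $\del\M$. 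So the class moved by an arbitrary $f\in\cJ(2)\setminus\cJ(3)$ may die in the quotient; one must choose $f$ and a surviving class explicitly and certify survival --- which is essentially the computation the paper performs.

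Two subsidiary points. First, your claim that ``the differential respects the $R$-filtration in a way that the $|R|=3$ part is not a boundary'' is backwards: Proposition \ref{prop:main} is an isomorphism of graded $\Gamma$-representations, not of chain complexes, and the cellular boundary of a length-$1$ cell (which lives in a summand with $|R|\le 2$) contains the face where the vertical line is pushed to $x=2$, landing the $E$-points on $X$ and hence landing squarely in the summand $\Mor_3$. A rank count confirms the kernel is nonzero already for $n=2$: one computes $\rk H_2(F_2(\Sigma_{g,1}))=2g(2g+1)-1<2g(2g+1)=\rk\Mor_2$. Second, your proposed fallback of ``borrowing the explicit genus-$\ge 3$ computation already referenced for Propositions \ref{H3F3} and \ref{H3F3g2}'' is circular, since that computation \emph{is} the proof of the present proposition. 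The viable repair is to abandon the purely structural reduction and argue as the paper does: produce a concrete $f\in\cJ(2)$ and a concrete $3$-cycle $\alpha$, and verify $f_*\alpha\neq\alpha$ by an intersection-theoretic pairing.
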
 

\begin{proof}
Consider a subsurface $\Sigma_{3, 1} \subseteq \M$.  Let $\gamma$ be the simple closed curve shown in Figure \ref{3HandleSurface}, and let $T_{\gamma}$ denote the Dehn twist about $\gamma$.  
Since $\gamma$ is a separating curve, $T_{\gamma} \in \cJ(2)$ by work of Johnson \cite{Johnson80,JohnsonII}.

\begin{figure}[!ht]    \centering
\labellist
\Large \hair 0pt
\pinlabel { {\color{orange} $\gamma$}} [r] at 500 510 
\endlabellist
\includegraphics[scale=.2]{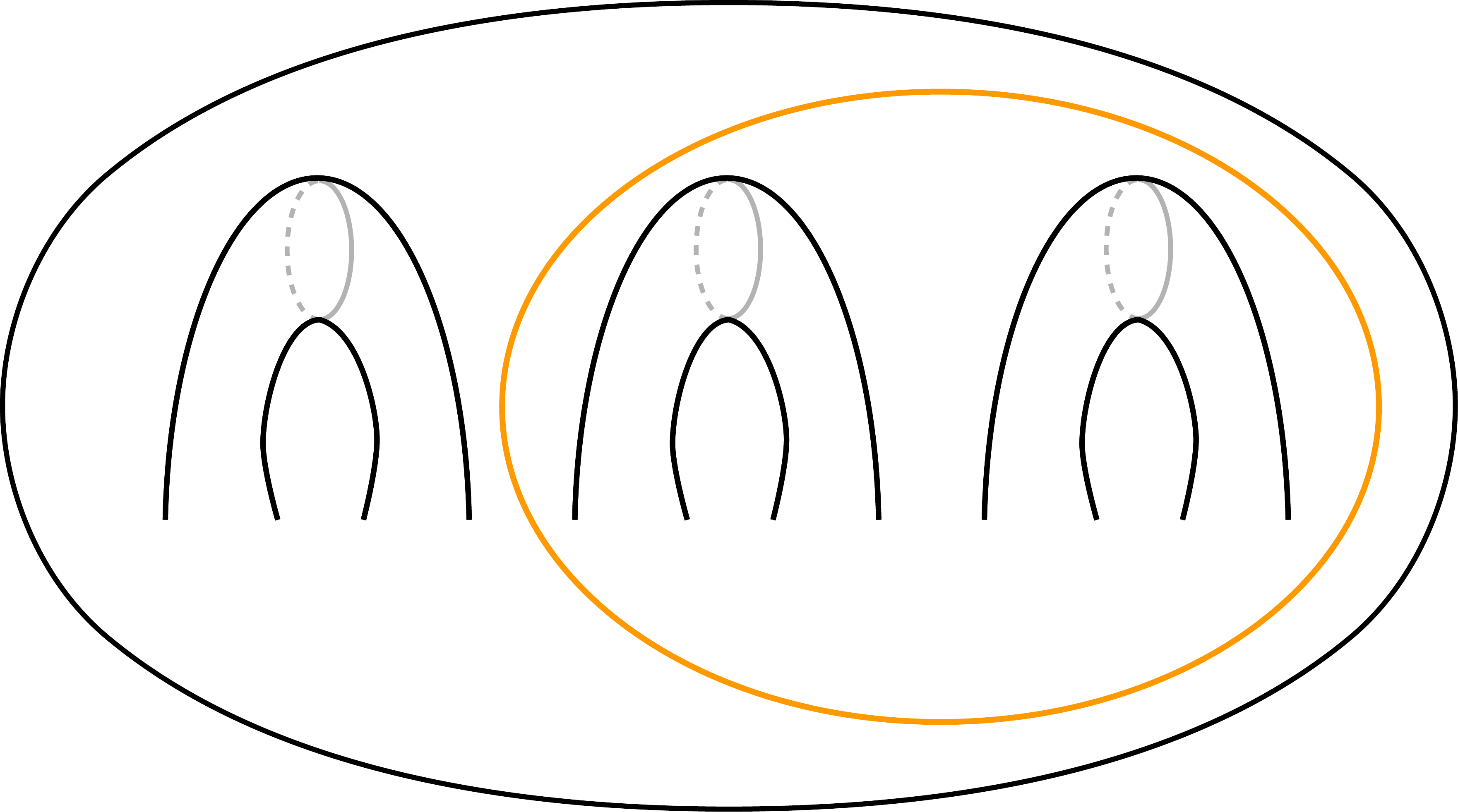} \\ \quad \\ 
\caption{The surface $\Sigma_{3,1}$ and the curve $\gamma$.}
\label{3HandleSurface}
\end{figure}  

Consider the disjoint subsurfaces $\Sigma' \cong \Sigma_{2,1}$ and $\Sigma'' \cong \Sigma_{1,1}$ shown in Figure \ref{3HandleSurfaceConfigDecomp}. 
\begin{figure}[!ht]    \centering
\labellist
\Large \hair 0pt
\pinlabel { { $\Sigma''$}} [l] at 700 130 
\pinlabel { { $\Sigma'$}} [l] at 250 120
\endlabellist
\includegraphics[scale=.15]{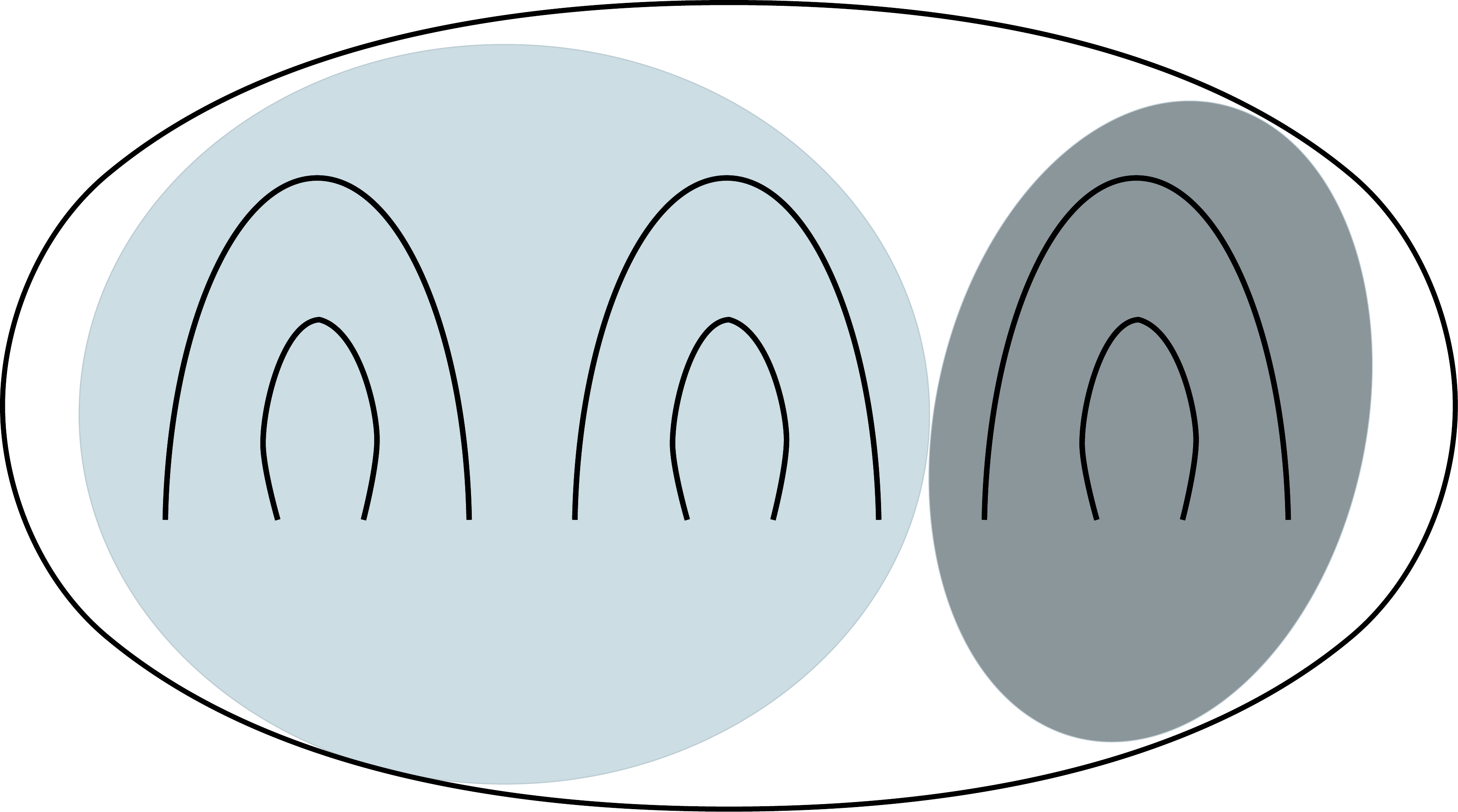} \\ \quad \\ 
\caption{The disjoint subsurfaces $\Sigma'$ and $\Sigma''$.}
\label{3HandleSurfaceConfigDecomp}
\end{figure}
We will construct chains $a \in C_2(F_{\{1,2\}}(\mathring{\Sigma}'))$ and $b \in C_1(F_{\{3\}}(\mathring{\Sigma}''))$ and write $a \bullet b$ to denote the image under the natural map
\begin{align*}
C_2(F_{\{1,2\}}(\mathring{\Sigma}')) \otimes C_1(F_{\{3\}}(\mathring{\Sigma}'')) & \longrightarrow C_3( (F_{\{1,2,3\}}(\mathring{\Sigma}_{3,1})),\\
 a\otimes b & \longmapsto a \bullet b .
\end{align*} 

The chain $b$ corresponds to the loop in $F_{\{3\}}(\mathring{\Sigma}'')$ in which particle 3 traverses the curve $\delta$ in Figure \ref{3HandleSurfaceConfigClass}. We construct the chain $a$ as follows.
\begin{figure}[!ht]    \centering
\labellist
\Large \hair 0pt
\pinlabel { \small {\color{ForestGreen} $\delta$}} [l] at 810 160 
\pinlabel {\small {\color{ForestGreen} $3$}} [r] at 655 255 
\pinlabel {\small {\color{red} $2$}} [u] at 222 410 
\pinlabel {\small {\color{ProcessBlue} $1$}} [r] at 80 252 
\pinlabel { \small {\color{ProcessBlue} $\alpha_1$}} [l] at 90 150 
\pinlabel { \small {\color{ProcessBlue} $\alpha_2$}} [l] at 380 150
\pinlabel { \small {\color{red} $\beta_1$}} [l] at 265 190 
\pinlabel { \small {\color{red} $\beta_2$}} [l] at 545 190
\pinlabel { \small {\color{ProcessBlue} $\eta$}} [l] at 545 80
\endlabellist
\includegraphics[scale=.23]{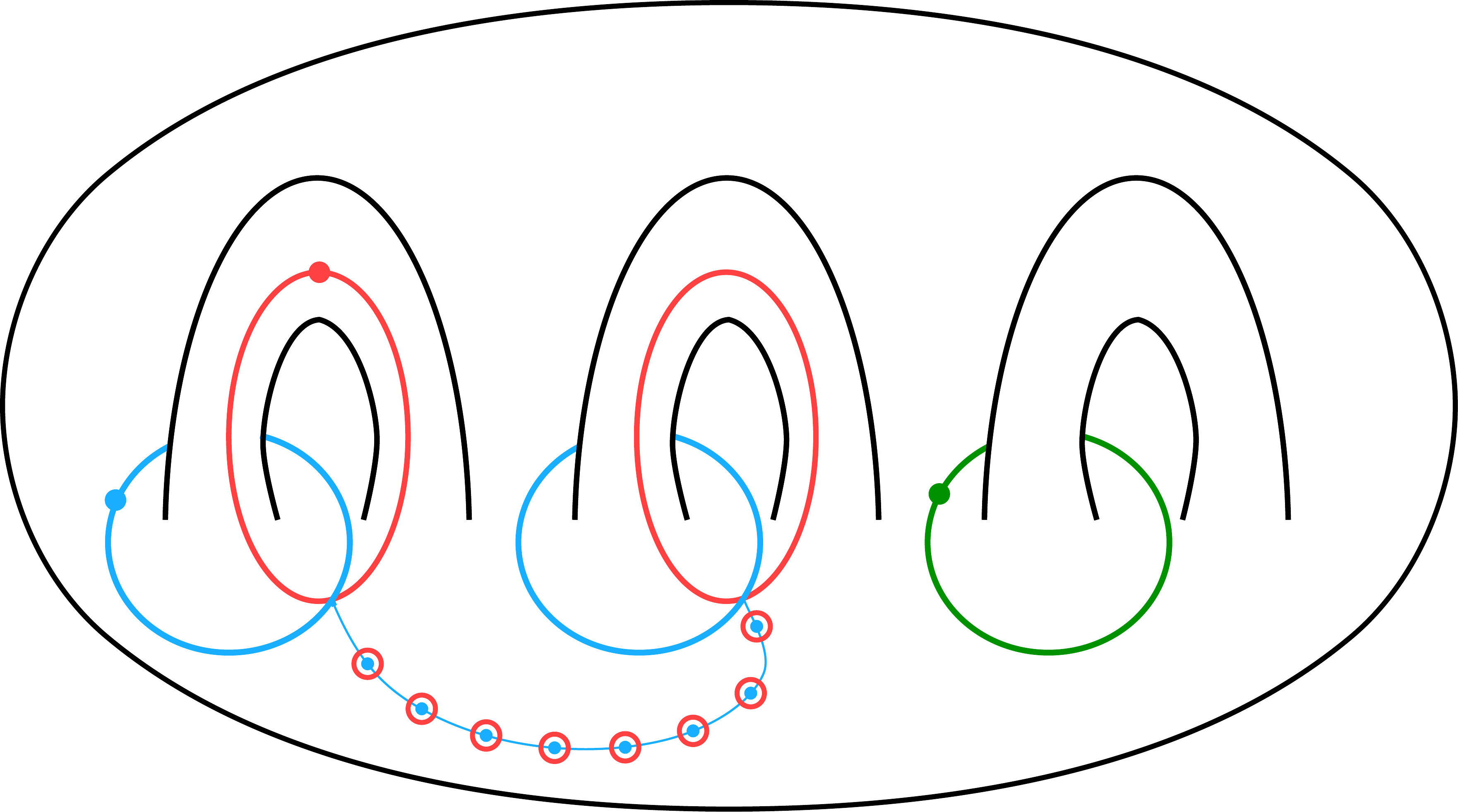} \\ \quad \\
\caption{The chain $a \bullet b$.}
\label{3HandleSurfaceConfigClass}
\end{figure}   

Consider the simple closed curves $\alpha_1$, $\beta_1$, $\alpha_2$, $\beta_2$ in Figure \ref{3HandleSurfaceConfigClass}. 
The curves $\alpha_1$ and $\beta_1$ intersect at a single point $x_1$, and $\alpha_2$ and $\beta_2$ intersect at a single point $x_2$. We define a embedded torus
\[
\sigma_1'\colon \bfT^2=S^1\times S^1 \to (\mathring{\Sigma} ')^2
\]
 by letting particle 1 traverse $\alpha_1$ and particle 2 traverse $\beta_1$. Delete an open ball $B$ from our torus around the preimage $x\in\bfT^2$ of $(x_1, x_1)\in(\mathring{\Sigma}')^2$. 
The restriction of $\sigma_1'$ to  $\bfT^2\setminus B \cong \Sigma_{1,1}$ is then an embedding into the configuration space 
\[
\sigma_1\colon \bfT^2\setminus B \to F_{\{1,2\}}(\mathring{\Sigma}').
\]
The restriction $\sigma_1|_{\partial B}$ to the boundary of $B$ takes values in the configuration
space $F_{\{1,2\}}(D_{x_1})$, where $D_{x_1}\subset\mathring{\Sigma}'$ is a small disc around $x_1$;
more precisely, the restricted map $\sigma_1|_{\partial B}\colon\partial B\to F_{\{1,2\}}(D_{x_1})$
describes, up to homotopy in $F_{\{1,2\}}(D_{x_1})$ a \emph{planetary system}, i.e. a curve of configurations of two particles labelled 1 and 2 in a disc, in which particle 2 spins once clockwise around particle 1.

We repeat the construction for the curves $\alpha_2$ and $\beta_2$, to produce a second embedding  
\[
\sigma_2: \bfT^2\setminus B \to F_{\{1,2\}}(\mathring{\Sigma}'),
\]
and again the restriction $\sigma_2|_{\partial B}$ gives a planetary system in $F_{\{1,2\}}(D_{x_2})$
for a small disc $D_{x_2}\subset\mathring{\Sigma}'$ around $x_2$.

Our chain $a$ corresponds to an embedded surface of genus 2
\[
\sigma\colon\Sigma_2=(\bfT^2\setminus B)\sqcup_{\del B} (\del B\times[0,1])\sqcup_{\del B}(\bfT^2\setminus B)
\to F_2(\mathring{\Sigma}')
\]
obtained by extending the maps $\sigma_1$ and $\sigma_2$ over a cylinder glued by its boundary components homeomorphically to the boundary components of two copies of $T\setminus B$.
The extension depends on a choice of an arc $\eta$ in $\mathrm{\Sigma}'$ connecting $x_1$ with $x_2$,
yielding an isotopy between $\sigma_1|_{\partial B}$ and $\sigma_2|_{\partial B}$.
A schematic of the resulting chain $a$ is shown in Figure \ref{3HandleSurfaceConfigClass}.

We will prove that $T_{\gamma}( a \bullet b)$ is not homologous to $a \bullet b$. The difference $$d = T_{\gamma}( a \bullet b) - (a \bullet b)$$ is homologous to the 3-torus shown in Figure \ref{3HandleSurfaceConfigClassDifference}, consisting of particle 3 traversing the loop $\delta$ and the particle 1-2 planetary system  traversing the loop $\gamma$. 

\begin{figure}[!ht]    \centering
\labellist
\Large \hair 0pt
\pinlabel { \small {\color{ForestGreen} $\delta$}} [l] at 810 160 
\pinlabel { \small {\color{ProcessBlue} $\gamma$}} [r] at 500 510 
\endlabellist
\includegraphics[scale=.23]{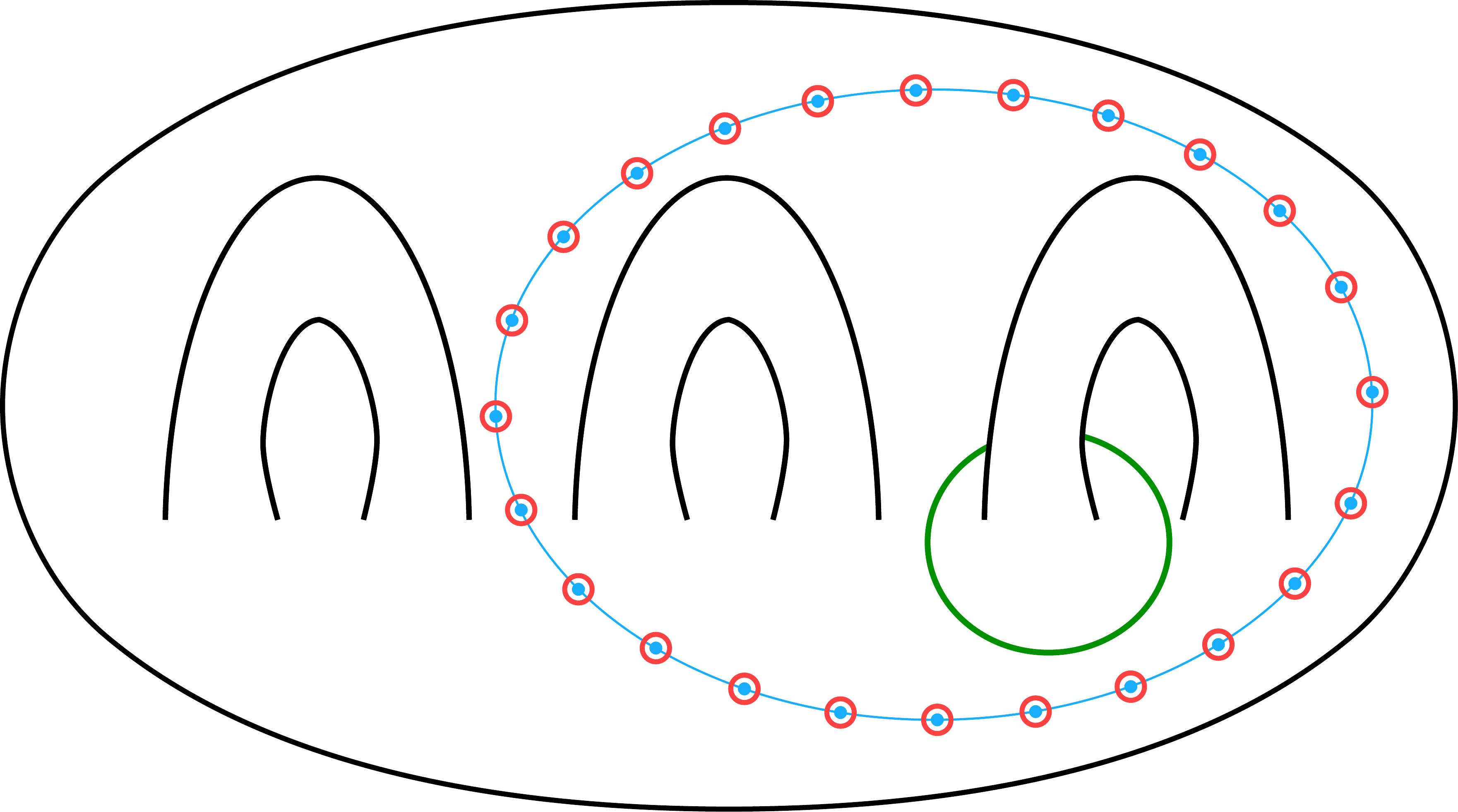} \\ \quad \\ 
\caption{The chain $d = T_{\gamma}( a \bullet b) - (a \bullet b)$}
\label{3HandleSurfaceConfigClassDifference}
\end{figure}  

To verify that $d$ is nonzero in homology, consider the oriented, properly embedded arc $\omega\subset\mathring{\Sigma}_{3,1}$ shown in Figure \ref{3HandleSurfaceConfigClassDifferenceIntersection}. Consider the properly embedded sub-3-manifold of $F_3(\mathring{\Sigma}_{3,1})$ consisting of all configurations in which particles 1, 2, and 3 lie on $\omega$ in this order. This oriented submanifold intersects $d$ transversely once. 

\begin{figure}[!ht]    \centering
\labellist
\Large \hair 0pt
\pinlabel { \small {\color{ForestGreen} $\delta$}} [l] at 750 160 
\pinlabel { \small {\color{ProcessBlue} $\gamma$}} [r] at 500 510 
\pinlabel { \small {\color{Gray} $\omega$}} [r] at 890 180  
\endlabellist
\includegraphics[scale=.23]{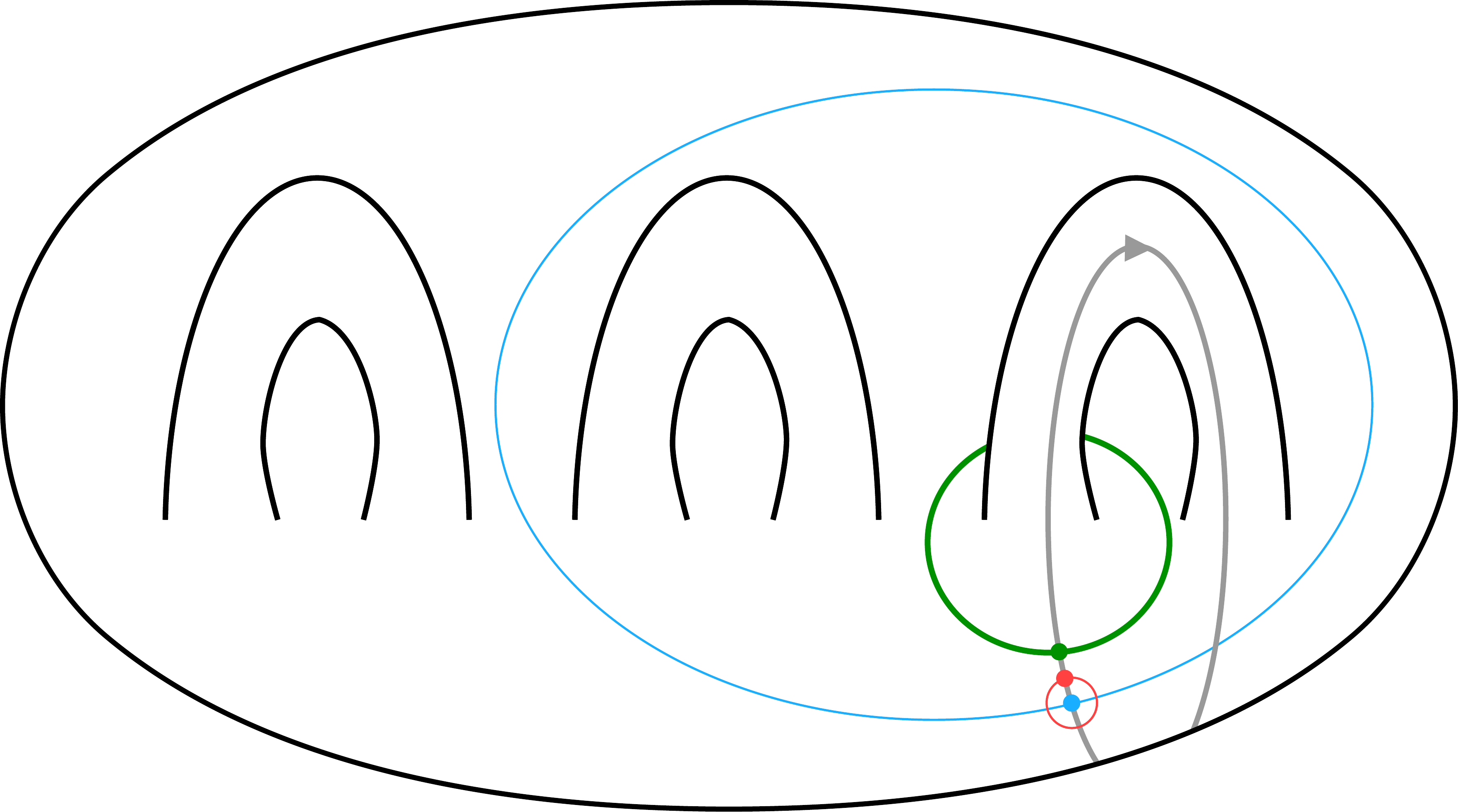} \\ \quad \\ 
\caption{The chain $d = T_{\gamma}( a \bullet b) - (a \bullet b)$}
\label{3HandleSurfaceConfigClassDifferenceIntersection}
\end{figure}  
We conclude that $d$ is nonzero in homology, and hence that $T_{\gamma}( a \bullet b)$ is not homologous to $(a \bullet b)$.  \end{proof} 

\subsection{\texorpdfstring{$\cJ(2)$}{J(2)} does not act trivially on \texorpdfstring{$H_3(F_3(\mathring{\M}))$}{H3(F3(M))} for \texorpdfstring{$g=2$}{g=2}}
We refine the previous argument in the genus 2 case.
\begin{prop}
\label{H3F3g2}
The group $\cJ(2)$ acts nontrivially on $H_3(F_3(\mathring{\M}))$ for $\M$ of genus 2.
\end{prop}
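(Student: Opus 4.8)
The plan is to mimic the genus $3$ construction of Proposition \ref{H3F3}, but since the genus $2$ surface $\M=\Sigma_{2,1}$ cannot contain disjoint copies of $\Sigma_{2,1}$ and $\Sigma_{1,1}$, I must allocate the genus differently. I would split $\Sigma_{2,1}$ into a copy $\Sigma'\cong\Sigma_{1,1}$ carrying the ``orbiting pair'' and a copy $\Sigma''\cong\Sigma_{1,1}$ carrying the single particle $3$; the separating curve $\gamma$ now cuts $\Sigma_{2,1}$ into these two one-handled pieces, and by Johnson \cite{Johnson80,JohnsonII} the Dehn twist $T_\gamma$ about a separating curve still lies in $\cJ(2)$. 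On $\Sigma'$, with $\alpha_1,\beta_1$ the standard dual curves meeting once at $x_1$, I let particle $1$ traverse $\alpha_1$ and particle $2$ traverse $\beta_1$, producing a torus in $(\mathring\Sigma')^2$; deleting a ball around the diagonal preimage gives $\sigma_1\colon \bfT^2\setminus B\to F_{\{1,2\}}(\mathring\Sigma')$ whose boundary restriction is a planetary system in a disc. This is exactly one ``half'' of the chain $a$ from the genus $3$ case. The chain $b\in C_1(F_{\{3\}}(\mathring\Sigma''))$ is again a loop $\delta$ generating $H_1$ of the handle $\Sigma''$.

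Since I only have one handle in $\Sigma'$, I cannot form the closed genus $2$ surface $\sigma$ used before; instead I would take $a$ to be the relative cycle given by $\bfT^2\setminus B$ capped off using that $\sigma_1|_{\partial B}$, a planetary system, bounds in $F_{\{1,2\}}(\mathring{\Sigma}')$ after we allow particle $2$ to travel around particle $1$ and contract — equivalently, I work with the closed torus $\bfT^2$ mapping to the symmetric product or to $F_{\{1,2\}}(\mathring\Sigma')$ after a small perturbation, accepting that the resulting $a\bullet b$ is an honest $3$-cycle in $F_3(\mathring\Sigma_{2,1})$. The key computation is then the same as in Proposition \ref{H3F3}: the difference $d=T_\gamma(a\bullet b)-(a\bullet b)$ is homologous to a $3$-torus in which the orbiting pair $1,2$ (as a planetary system) traverses the loop $\gamma$ while particle $3$ traverses $\delta$, and I detect $d$ by intersecting with the properly embedded sub-$3$-manifold of configurations where $1,2,3$ lie in order on a suitable properly embedded arc $\omega\subset\mathring\Sigma_{2,1}$; this intersection should be transverse and a single point, so $d\neq 0$ in $H_3(F_3(\mathring\M))$.

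The main obstacle is the genus bookkeeping in the capping step: in genus $3$ the chain $a$ was a genuine embedded closed genus-$2$ surface built from two copies of $\bfT^2\setminus B$ joined along a cylinder, and the separating curve $\gamma$ lived in the complement of both subsurfaces; in genus $2$ there is no room for a second handle, so I must either use a single $\bfT^2\setminus B$ and argue that its boundary planetary system is nullhomologous in $F_{\{1,2\}}(\mathring\Sigma')$ (it is, since $\Sigma'$ is not a disc, so the planetary-system loop is a commutator-type element that bounds), or replace the embedded-surface model by a sum of product cycles. I expect the cleanest route is to note that the planetary system around $x_1$ becomes nullhomotopic once $x_1$ is allowed to move around the handle $\Sigma'$, giving an explicit $2$-chain in $F_{\{1,2\}}(\mathring\Sigma')$ with the planetary boundary, which we glue to $\bfT^2\setminus B$ to define $a$; everything else — the description of $d$, the choice of $\omega$, and the transverse intersection count — then transfers verbatim from the proof of Proposition \ref{H3F3}, with $\Sigma_{3,1}$ replaced by $\Sigma_{2,1}$ and the genus-$2$ piece $\Sigma'$ replaced by the genus-$1$ piece. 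I would end the proof by remarking that the resulting nontrivial action for $g=2$ combined with Proposition \ref{H3F3} gives nontriviality for all $g\ge 2$.
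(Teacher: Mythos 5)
Your construction has a genuine gap at the step where you pass from $a\bullet b$ to $d=T_\gamma(a\bullet b)-(a\bullet b)$. In your setup particles $1,2$ live in one handle $\Sigma'\cong\Sigma_{1,1}$, particle $3$ lives in the other handle $\Sigma''$, and $\gamma$ is the curve separating the two. If the cap you attach to $\sigma_1(\bfT^2\setminus B)$ is supported in $F_{\{1,2\}}(\mathring{\Sigma}')$ --- and any cap obtained from the observation that the planetary loop bounds in $F_2(\mathring{\Sigma}_{1,1})$ is of this form; indeed essentially the only thing it bounds there is (a parallel copy of) the punctured torus $\sigma_1$ itself, so this cap moreover risks making $a=0$ in $H_2$ --- then the entire cycle $a\bullet b$ is supported in configurations with $1,2\in\Sigma'$ and $3\in\Sigma''$, hence disjoint from an annular neighbourhood of $\gamma$. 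Representing $T_\gamma$ by a homeomorphism supported in that annulus gives $T_\gamma(a\bullet b)=a\bullet b$ on the nose, so $d=0$ and nothing is detected. The genus-$3$ argument works only because the chain $a$ straddles $\gamma$: the cylinder carrying the planetary system along the arc $\eta$ from $x_1$ to $x_2$ crosses $\gamma$ once, and $T_\gamma(a)-a$ is precisely the torus swept out by the planetary system traversing $\gamma$. That structural feature is what you must reproduce, and it cannot be reproduced while keeping $a$ and $b$ supported in disjoint subsurfaces of $\Sigma_{2,1}$.

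The paper's actual genus-$2$ proof therefore abandons the product $a\bullet b$ and builds a single closed $3$-manifold mapping to $F_3(\mathring{\Sigma}_{2,1})$: it starts from an embedded $3$-torus in $(\mathring{\Sigma}_{2,1})^3$ with all three particles moving, deletes solid tori around the $(1,2)$- and $(1,3)$-collision loci, and caps each boundary torus by the product of a circle (one particle circulating) with a punctured torus that closes off the corresponding planetary system \emph{after dragging it across $\gamma$ into the other handle}, taking care that the loops traced by the various particles remain disjoint. Because the caps cross $\gamma$, the difference $T_\gamma(a)-a$ is a sum of two explicit $3$-tori, one of which meets the ordered-points-on-$\omega$ submanifold transversely in a single point; your detection step is correct once one has such a representative. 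So your overall strategy (separating twist in $\cJ(2)$, planetary systems, intersection with the arc submanifold) is the right one, but the capping must be routed through the second handle rather than performed inside $\Sigma'$, and once it is, the clean product structure you are relying on is no longer available.
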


Consider the surface $\Sigma_{2,1}$ and the separating curve $\gamma$ shown in  Figure \ref{2HandleSurface}. 

\begin{figure}[!ht]    \centering
\labellist
\Large \hair 0pt
\pinlabel { {\color{orange} $\gamma$}} [r] at 530 510 
\endlabellist
\includegraphics[scale=.16]{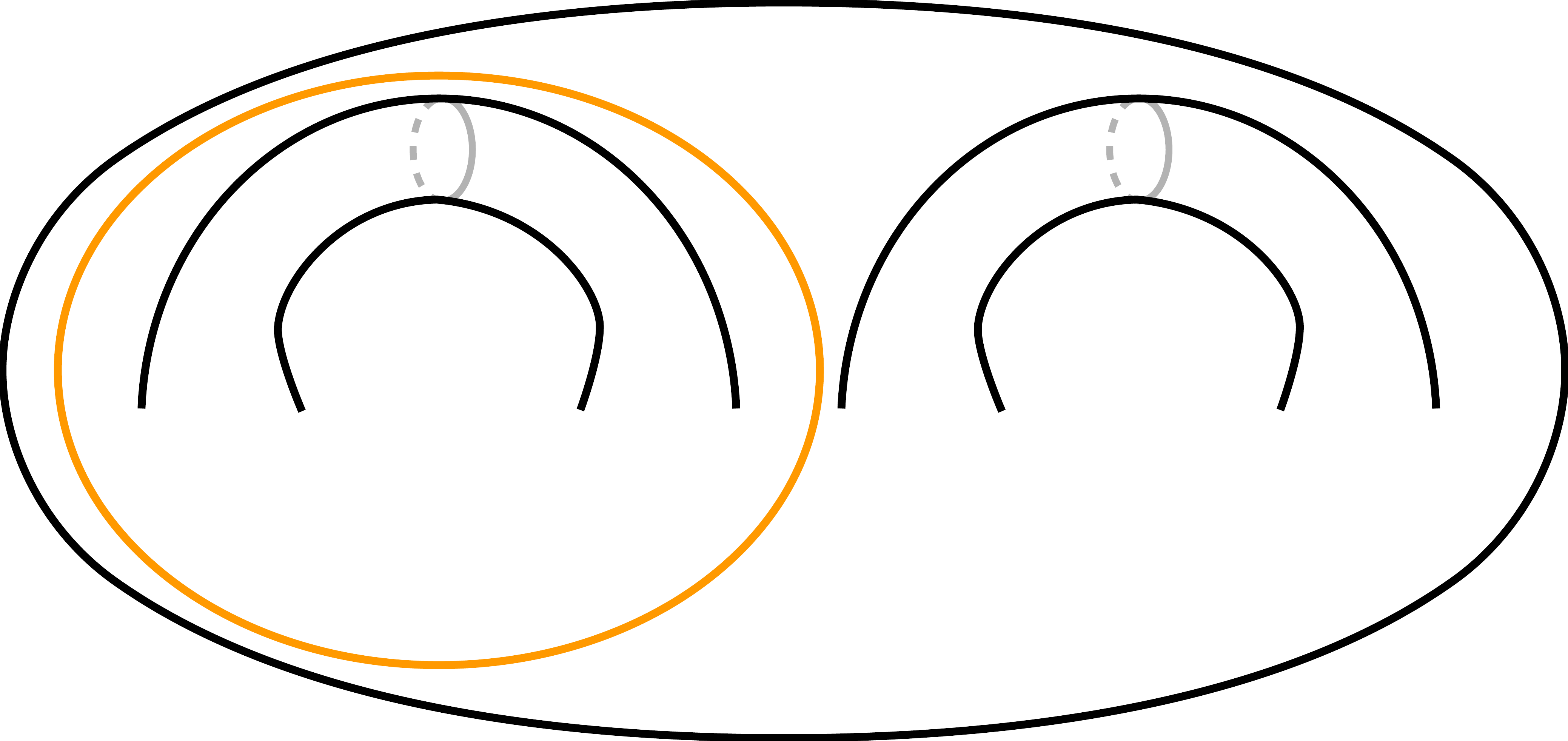} \\ \quad \\ 
\caption{The surface $\Sigma_{2,1}$ and the curve $\gamma$.}
\label{2HandleSurface}
\end{figure}  

We construct a class $a \in H_3( F_3(\mathring{\Sigma}_{2,1}))$, as follows.  We first embed a 3-torus $\bfT^3 = (S^1_{(1)} \times S^1_{(2)} \times S^1_{(3)})$ in $(\mathring{\Sigma}_{2,1})^3$ as in the first image in Figure \ref{2HandleSurfaceClass}.

\begin{figure}[!ht]    \centering
\labellist
\Large \hair 0pt
\pinlabel {\small {\color{red} $1$}} [u] at 335 1545 
\pinlabel {\small {\color{ForestGreen} $3$}} [r] at 390 1660
\pinlabel {\small {\color{ProcessBlue} $2$}} [r] at 320 1660 
\pinlabel {\small {\color{red} $1$}} [u] at 870 870 
\pinlabel {\small {\color{ForestGreen} $3$}} [r] at 920 985
\pinlabel {\small {\color{ProcessBlue} $2$}} [r] at 320 985 
\pinlabel {\small {\color{ProcessBlue} $\alpha$}} [r] at 320 850 
\pinlabel {\small {\color{red} $1$}} [u] at 870 190 
\pinlabel {\small {\color{ForestGreen} $3$}} [r] at 390 310
\pinlabel {\small {\color{ProcessBlue} $2$}} [r] at 850 310
\pinlabel {\huge {\color{black} $+$}} [r] at -50 965
\pinlabel {\huge {\color{black} $+$}} [r] at -50 285
\endlabellist
\includegraphics[scale=.16]{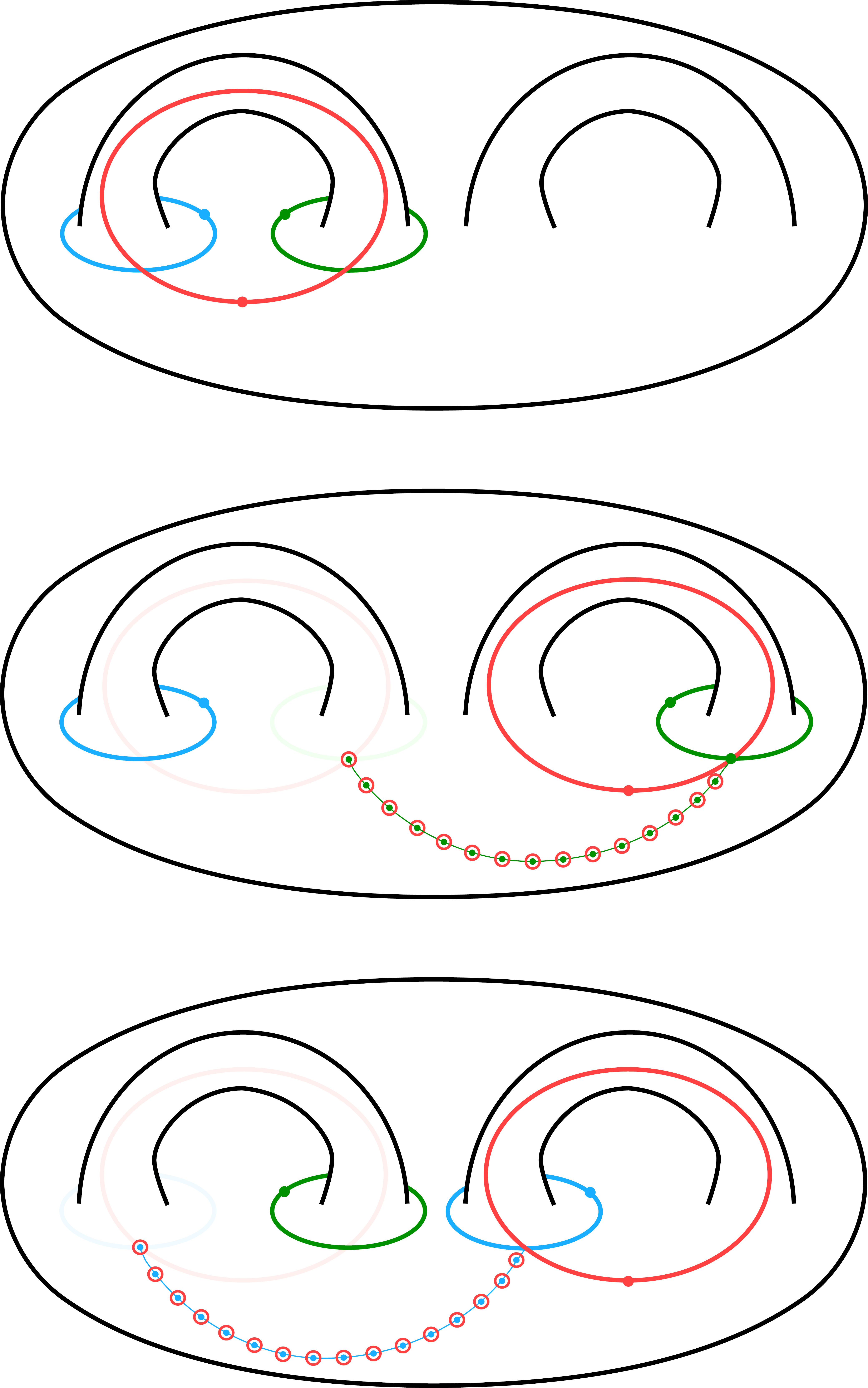} \\ \quad \\ 
\caption{The class $a \in H_3( F_3(\mathring{\Sigma}_{2,1}))$.}
\label{2HandleSurfaceClass}
\end{figure}  

Consider the embedded 2-torus $\bfT^{\{1,2\}}=S^1_{(1)}\times S^1_{(2)}\subset\bfT^3$
parameterised by the particles $1$ and $2$. As in the proof of Proposition \ref{H3F3}, we delete a small ball $B_{12}\subset\bfT^{\{1,2\}}$ around the collision point of the two particles. Correspondingly, we delete a solid torus $\bfT_{12} = B_{12} \times S^1_{(3)}$ from $\bfT^3$. In the same 
way we remove the collision point of particles
$1$ and $3$. The result is an embedded $3$-manifold with boundary
\[
\sigma': \bfT^3 \setminus (\bfT_{12} \sqcup \bfT_{13}) \quad \hookrightarrow \quad F_3(\mathring{\Sigma}_{2,1})
\]
defined by the complement of two disjoint solid tori in a 3-torus.

To construct the class $a$, we will extend $\sigma'$ to a closed 3-manifold obtained by glueing
two other 3-manifolds with boundary along $\del \bfT_{12}$ and $\del\bfT_{13}$.
These are illustrated in the second and third image in Figure \ref{2HandleSurfaceClass}, respectively, using the conventions of Figure \ref{3HandleSurfaceConfigClass}.  The second image shows the product of the circle $S^1_{(2)}$ (parameterised by particle 2) with a surface $\Sigma^{13}\cong\Sigma_{1,1}$ (parameterised by particles 1 and 3). The embedding $\sigma^{13}\colon\Sigma\to F_{\{1,3\}}(\mathring{\Sigma}_{2,1})$ is defined as follows: first, we regard $\Sigma^{13}$ as the union
\[
\Sigma^{13}=(\bfT^{\{1,3\}}\setminus B_{13})\sqcup_{\del B_{13}}\del B_{13}\times[0,1];
\]
we use the cylinder $\del B_{13}\times[0,1]$
to isotope the restriction $\sigma'|_{\partial B_{13}}$ to a map with values in
a small disc inside the rightmost handle $\Sigma''\subset\Sigma_{2,1}$;
then we use $(\bfT^{\{1,3\}}\setminus B_{13})$ to close off the planetary system described by particles
1 and 3, letting again particles 1 and 3 trace the loops shown in Figure \ref{2HandleSurfaceClass}.
We can assume that no configuration hit by the embedding $\sigma^{13}$ touches with some particle
the curve $\alpha$ in the middle image of Figure \ref{2HandleSurfaceClass}: therefore we can cross
$\sigma$ with the map $S^1_{(2)}\to F_{\{2\}}(\mathrm{\Sigma}_{2,1})$ in which we let particle
2 traverse the curve $\alpha$, obtaining finally an embedding $S^1_{(2)}\times \Sigma^{13}\to F_3(\mathring{\Sigma}_{2,1})$ extending on the boundary the map $\sigma'|_{\del\bfT_{13}}$.

The third image is analogous. 

Let $T_{\gamma}$ be the Dehn twist about $\gamma$. We will show that $T_{\gamma}$ acts nontrivially on the class $a$ by verifying that the difference $d = T_{\gamma}(a) - a$ is nonzero. This class is shown in Figure \ref{2HandleSurfaceTwistDifference}, as the sum of two homology classes represented by embedded 3-tori in $F_3(\mathrm{\Sigma}_{2,1})$.

\begin{figure}[!ht]    \centering
\labellist
\Large \hair 0pt
\pinlabel {\huge  {\color{black} $+$}} [r] at 1295 285  
\endlabellist
\includegraphics[scale=.145]{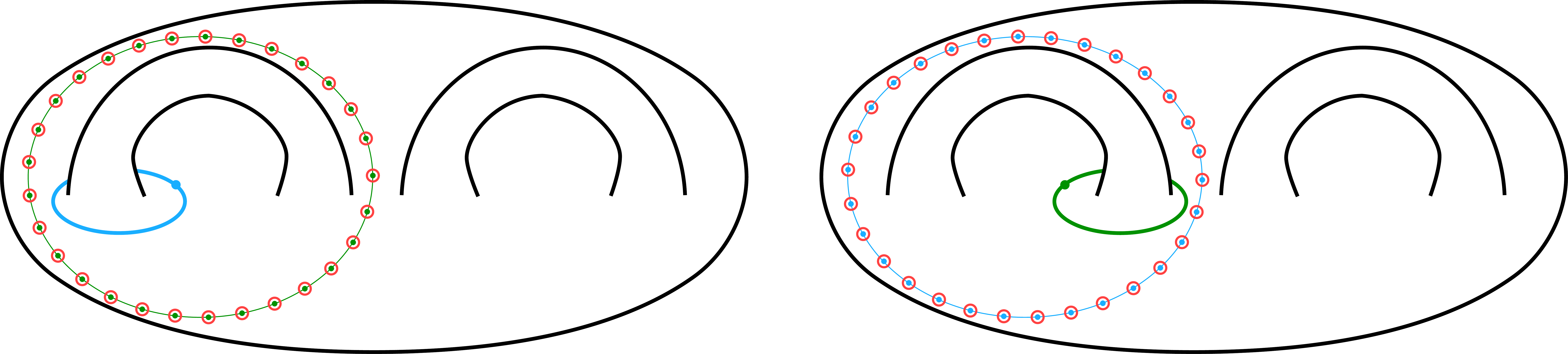} \\ \quad \\ 
\caption{The difference $T_{\gamma}(a) - a$ as a sum of chains}
\label{2HandleSurfaceTwistDifference}
\end{figure}  

 We verify that $d$ is nonzero in homology by the same method as in the proof of Proposition \ref{H3F3}. Consider the 3-manifold of all configurations with particles 1, 2, and 3 lying in this order on the directed arc $\omega$ in Figure \ref{2HandleSurfaceIntersection}.

\begin{figure}[!ht]    \centering
\labellist
\Large \hair 0pt
\pinlabel {\small {\color{gray} $\omega$}} [r] at 500 145  
\endlabellist
\includegraphics[scale=.16]{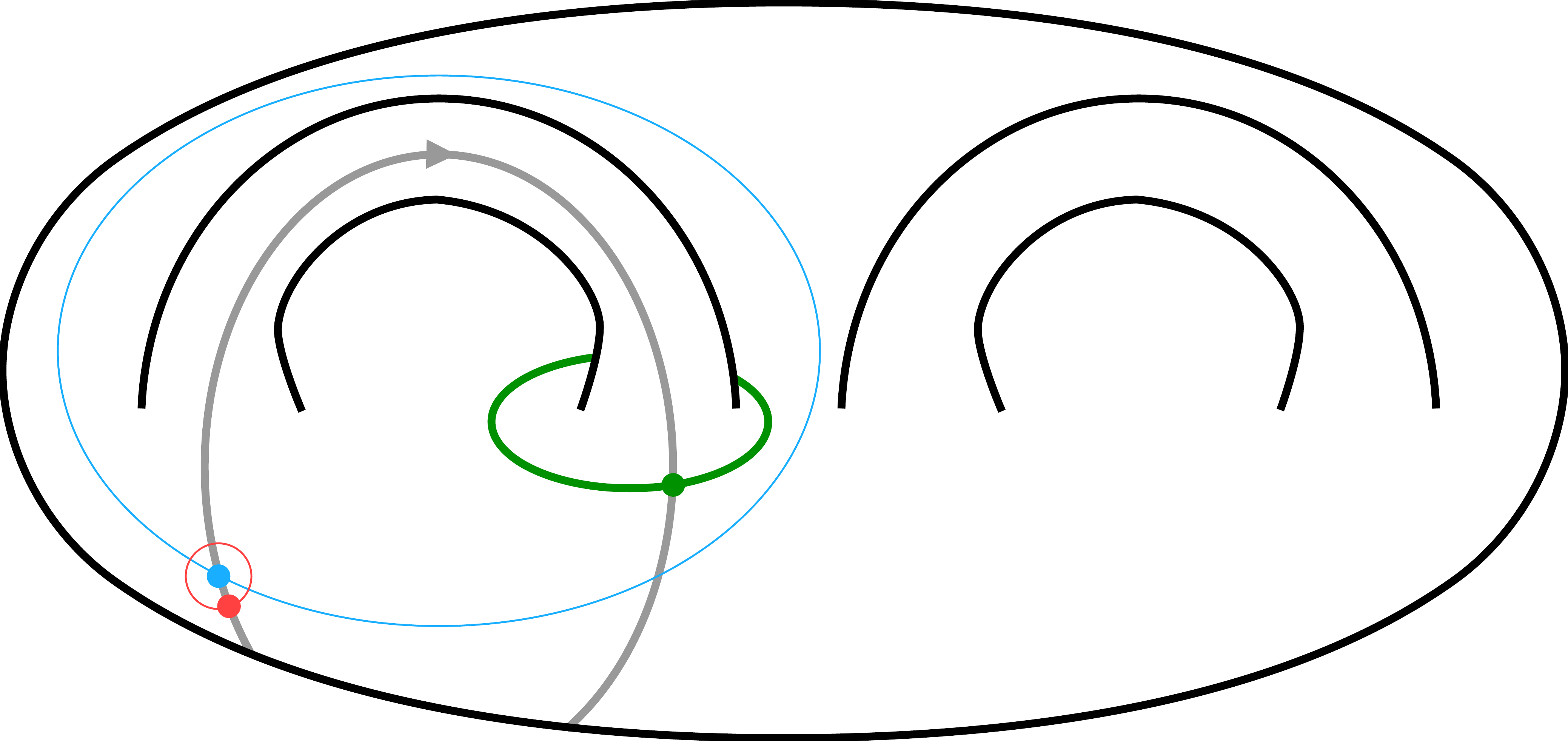} \\ \quad \\ 
\caption{The transverse intersection of sub-3-manifolds}
\label{2HandleSurfaceIntersection}
\end{figure}  

This 3-manifold intersects our representative of $d$ once transversely. We conclude that $d$ is nonzero, as claimed. 

\subsection{\texorpdfstring{$\cJ(i)$}{J(i)} is not the entire kernel of the action on
\texorpdfstring{$H_i(F_n(\mathring{\M}))$}{Hi(Fn(M))} for \texorpdfstring{$i\ge3,g\ge2$}{i>=3,g>=2}}
\label{subsec:bDehnJ2}
The Dehn twist $T_{\del\M}\in\Gamma$ generates, for $g\ge1$, an infinite cyclic subgroup
$\sca{T_{\del\M}}$ of the mapping class group.

Let $U_{\del\M}$ denote a small collar neighbourhood of $\del\M$ in $\M$;
then $T_{\del\M}$ can be represented by a homeomorphism $f\colon\M\to\M$ fixing $\M\setminus U_{\del\M}$ pointwise.
The inclusion $\M\setminus U_{\del\M}\subset\mathring{\M}$ induces a homotopy equivalence
$F_n(\M\setminus U_{\del\M})\simeq F_n(\mathring{\M})$, so we can study the action
of $f$ on $H_*(F_n(\mathring{\M}))$ by considering the restricted action of $f$ on
$H_*(F_n(\M\setminus U_{\del\M}))$. The latter action is trivial, since $f$ acts as the identity
on $F_n(\M\setminus U_{\del\M})$.
It follows that $T_{\del\M}$ acts trivially on all homology
groups $H_*(F_n(\mathring{\M}))$.

In fact $\sca{T_{\del\M}}$ is contained in $\cJ(2)\subset\Gamma$, since $T_{\del\M}$ is a Dehn twist about a separating curve. In the following we argue that $\sca{T_{\del\M}}$ intersects
$\cJ(3)$ trivially, for all $g\ge1$.

Let $\pi=\pi_1(\M,p_0)$ be generated as a free group by elements
$a_1,b_1,\dots,a_g,b_g$, such that the boundary loop $\del\M$, based at $p_0$, represents
the product of commutators $c:=[a_1,b_1]\dots[a_g,b_g]\in\pi$.

Then $T_{\del\M}$ induces the inner automorphism of $\pi$ given by conjugation by $c$, and similarly,
for $k\in\Z$, the power $T_{\del\M}^k$ induces conjugation by $c^k$ on $\pi$.

In order to show that for $k\neq0$ the mapping class $T_{\del\M}^k$ is not in $\cJ(3)$, it suffices
to check that $T_{\del\M}^k(a_1)\cdot a_1^{-1}$ does not lie in $\gamma_3\pi$.
Since $T_{\del\M}^k(a_1)\cdot a_1^{-1}$ does lie in $\gamma_2\pi$, it suffices to evaluate
$[T_{\del\M}^k(a_1)\cdot a_1^{-1}]$ in the quotient $\gamma_2\pi/\gamma_3\pi$.

Recall that, for $\ell\ge 0$, the quotient $\gamma_\ell\pi/\gamma_{\ell+1}\pi$ is isomorphic to the degree-$(\ell+1)$ part of the free Lie algebra
generated over $\Z$ by elements $A_1,B_1,\dots,A_g,B_g$ sitting in degree 1.
In $\gamma_2\pi/\gamma_3\pi$ we then have the following chain of equalities,
where $C=[A_1,B_1]+\dots+[A_g,B_g]$
denotes the class in $\gamma_1\pi/\gamma_2\pi$ represented by $c$:
\[
\begin{split}
[T_{\del\M}^k(a_1)\cdot a_1^{-1}]&=[c^ka_1c^{-k}a_1^{-1}]\\
 &=[kC,A_1]\\
 &=k\Big([[A_1,B_1],A_1]+[[A_2,B_2],A_1]+\dots+[[A_g,B_g],A_1]\Big).
\end{split}
\]
The last expression is $k$ times a sum of $g$ standard generators of the free abelian group
$\gamma_2\pi/\gamma_3\pi$, hence it does not vanish. This shows that $\sca{T_{\del\M}}$ intersects
$\cJ(3)$ in the trivial group. We conclude this section with a conjecture.
\begin{conj}\label{6.3}
 Let $\M$ be a surface of genus $g\ge1$ with one boundary component, and let $0\le i\le n$.
 Then the kernel of the action of $\Gamma$ on $H_i(F_n(\mathring{\M}))$ is the subgroup
 of $\Gamma$ generated by $\cJ(i)$ and $\sca{T_{\del\M}}$.
\end{conj}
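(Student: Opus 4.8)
The inclusion $\supseteq$ is already available: $\cJ(i)$ acts trivially on $H_i(F_n(\mathring{\M}))$ and $H^i(F_n(\mathring{\M}))$ by Theorem~\ref{thm:main}, and $\sca{T_{\del\M}}$ acts trivially on $H_*(F_n(\mathring{\M}))$ by the argument in Subsection~\ref{subsec:bDehnJ2}; since $\sca{T_{\del\M}}$ is central and $\cJ(i)$ is normal, the subgroup they generate is normal and lies inside the kernel. The content is the reverse inclusion, which I would approach in three steps.

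\emph{Reduction to $n=i$.} The map $p\colon F_n(\mathring{\M})\to F_i(\mathring{\M})$ forgetting the last $n-i$ points is $\Homeo(\M,\del\M)$-equivariant, and it admits a homotopy section: push $\mathring{\M}$ into itself by an embedding isotopic to the identity whose image misses a fixed disc $D'$, and insert $n-i$ distinct points of $D'$. Hence $p_*\colon H_i(F_n(\mathring{\M}))\to H_i(F_i(\mathring{\M}))$ is a $\Gamma$-equivariant surjection and $p^*\colon H^i(F_i(\mathring{\M}))\to H^i(F_n(\mathring{\M}))$ a $\Gamma$-equivariant injection, so $\ker\big(\Gamma\curvearrowright H_i(F_n(\mathring{\M}))\big)\subseteq\ker\big(\Gamma\curvearrowright H_i(F_i(\mathring{\M}))\big)$ and likewise for cohomology. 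Combined with $\supseteq$, it suffices to prove equality when $n=i$.

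\emph{Identifying the homology.} Every cell $e_\ft$ of $F_i(\M)^\infty$ other than $\infty$ has dimension $d(\ft)=\fw(\ft)+\ell=i+\ell\ge i$, so $\tCh^{\cell}_{i-1}(F_i(\M)^\infty)=0$, while $\tCh^{\cell}_i(F_i(\M)^\infty)$ is spanned by the length-$0$ tuples and hence, by Lemma~\ref{lem:XMoriso}, is $\Mor_i$ with the Moriyama action of $\Gamma$. Using the identification $H^i(F_i(\mathring{\M}))\cong H_i\big(\tCh^{\cell}_*(F_i(\M)^\infty)\big)$ of Subsection~\ref{subsec:naturalquasiiso}, this gives an isomorphism of $\Gamma$-representations
\[
 H^i(F_i(\mathring{\M}))\;\cong\;\Mor_i/B_i,\qquad B_i:=\operatorname{im}\!\big(d\colon\tCh^{\cell}_{i+1}(F_i(\M)^\infty)\to\Mor_i\big);
\]
by Proposition~\ref{prop:main} the source is $\bigoplus_{\emptyset\neq Q\sqcup R=\{1,\dots,i\}}\Z\sca{\text{total orders of }Q}\otimes\Mor_R$, and $d$ is the Fox--Neuwirth boundary that slides the single $Q$-stack to the right-hand edge of $\bT(\M)$ and distributes the points of $Q$, in their prescribed order, among the $\U_\bullet,\V_\bullet$-strata recorded by $\Mor_R$. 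The homological statement of the conjecture reduces similarly, by dualising as in the homological portion of Theorem~\ref{thm:main}, to the analogous statement for the $\Gamma$-submodule $B_i^\perp\subseteq\Mor_i^\vee$. So the goal becomes the purely algebraic identity
\[
 \big\{\,g\in\Gamma\ :\ (M_i(g)-\Id)\,\Mor_i\subseteq B_i\,\big\}\;=\;\cJ(i)\cdot\sca{T_{\del\M}},
\]
where $M_i\colon\Gamma\to\operatorname{Aut}(\Mor_i)$ is Moriyama's action.

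\emph{The stabiliser identity, and the main obstacle.} The inclusion $\supseteq$ holds: $\cJ(i)=\ker M_i$ by Theorem~\ref{thm:Moriyama}, and $(M_i(T_{\del\M})-\Id)\Mor_i\subseteq B_i$ because $T_{\del\M}$ acts trivially on $H^*(F_i(\mathring{\M}))$. For the reverse inclusion the plan is to compute $B_i$ explicitly --- either directly from the shuffle description of $d$ above, or, more structurally, via the interpretation of $\Mor_\bullet$ as dual to the syzygies of $\Ch_*(F_\bullet(\mathring{\M}))$ over $\Ch_*(F_\bullet(D))$ from Appendix~\ref{Appendix}, in which $B_i$ should appear as an identifiable part of a bar construction --- and then to combine this with Moriyama's explicit description of the action of $\Gamma/\cJ(i)\hookrightarrow\operatorname{Aut}(\pi/\gamma_i\pi)$ on $\Mor_i$, running a weight/degree argument in the free nilpotent Lie algebra $\bigoplus_\ell\gamma_\ell\pi/\gamma_{\ell+1}\pi$ (in the spirit of Subsection~\ref{subsec:bDehnJ2}) to conclude that any $g$ with $(M_i(g)-\Id)\Mor_i\subseteq B_i$ agrees with an element of $\cJ(i)$ after multiplication by a power of $T_{\del\M}$. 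The hard part is exactly this last step: one must understand $B_i$ precisely enough to see that quotienting $\Mor_i$ by $B_i$ loses exactly the information that distinguishes $T_{\del\M}$ from the identity and nothing more. For $i=2$ this should follow from an enhancement of the first author's non-vanishing computation in \cite{Bianchi}; for $i\ge3$ it requires a genuinely new understanding of the interaction between the Fox--Neuwirth differential and the Johnson filtration, which is why we leave it as a conjecture.
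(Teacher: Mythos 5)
The statement you are addressing is Conjecture \ref{6.3}: the paper does not prove it, and explicitly leaves it open, so there is no proof of record to compare yours against. Your proposal, as you yourself acknowledge in the final step, is not a proof either --- it is a reduction of the conjecture to an algebraic statement about the pair $(\Mor_i, B_i)$, with the decisive computation left undone. The parts you do carry out look sound: the inclusion $\supseteq$ follows from Theorem \ref{thm:main} and Subsection \ref{subsec:bDehnJ2} exactly as you say; the reduction to $n=i$ via the equivariant forgetful map $p$ and its (non-equivariant, but that is all you need) homotopy section is correct, since $g_*x = g_*p_*s_*x = p_*(g_*s_*x) = p_*s_*x = x$ whenever $g$ acts trivially on $H_i(F_n(\mathring{\M}))$; and the identification $H^i(F_i(\mathring{\M}))\cong \Mor_i/B_i$ is right, because for $n=i$ Poincar\'e--Lefschetz duality lands in degree $2i-i=i$, which is the bottom nonzero degree of $\tCh^{\cell}_*(F_i(\M)^\infty)$, so the homology there is the cokernel of the single incoming differential from the $\ell=1$ cells.

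The genuine gap is the one you name: showing that the stabiliser $\{g : (M_i(g)-\Id)\Mor_i\subseteq B_i\}$ is no larger than $\cJ(i)\cdot\sca{T_{\del\M}}$. Nothing in the paper's machinery computes $B_i$ or the induced $\Gamma$-action on $\Mor_i/B_i$ explicitly; Moriyama's Theorem \ref{thm:Moriyama} only identifies the kernel of the action on $\Mor_i$ itself, and passing to the quotient by $B_i$ could a priori enlarge the kernel by more than the central $\sca{T_{\del\M}}$ (indeed, that it enlarges it by at least this much is the content of Subsection \ref{subsec:bDehnJ2}). Until that step is supplied --- even for $i=2$ --- the conjecture remains open, so your text should be presented as a reduction strategy rather than a proof.
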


In the case of surfaces without boundary, conceivably the kernel of the mapping class group action on $H_i(F_n(\mathring{\M}))$ is exactly $\cJ(i)$.

\appendix
\section{Reinterpretations of Moriyama's group}
\label{Appendix}
The goal of this appendix is to give alternative descriptions of Moriyama's group $\Mor_n$.
We first show that it is related to arc complexes playing an important role in homological stability and representation stability. Then we compare $\Mor_n$ to certain symmetric sequence valued hypertor groups which measure how to construct configuration spaces of surfaces in terms of configuration spaces of discs.

In this appendix we switch from homeomorphisms to diffeomorphisms of surfaces: this allows us to work
with smoothly embedded arcs rather than topologically embedded arcs, and thus to be in line with most of the literature on the subject. We recall that for a smooth surface $\M$ of genus $g$ with one boundary component,
the mapping class group $\Gamma=\Gamma(\M,\del\M)$ can be equivalently defined as $\pi_0(\Diff(\M,\del\M))$,
where $\Diff(\M,\del\M)$ is the group of diffeomorphisms of $\M$ that fix $\del\M$ pointwise, endowed with the
Whitney $C^{\infty}$-topology.

\subsection{Moriyama's group seen under Poincar\'e--Lefschetz duality}
The following lemma was also observed by Moriyama, and the proof is included for completeness. For an abelian group $A$, we let $A^*$ denote $\Hom(A,\Z)$. See Definition \ref{defMprime} for the definition of $\M'$ and $\I$.

\begin{lem}
\label{lem:naturalisoMoriyama}
There is a natural isomorphism
\[
\Mor_n^* \cong H_n(F_n(\M'),F_{n,1}(\M',\I)).
\]
\end{lem}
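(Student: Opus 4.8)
The plan is to prove the isomorphism $\Mor_n^* \cong H_n(F_n(\M'),F_{n,1}(\M',\I))$ by a direct application of Poincar\'e--Lefschetz duality, exactly in the spirit of Subsection \ref{subsec:naturalquasiiso}, keeping track of naturality under the $\Gamma$-action throughout. First I would recall that $\Mor_n = H_n(\M^n,\Delta_n(\M)\cup A_n(\M))$, and that the excisive/quotient description from Subsection \ref{subsec:relationship} identifies this relative homology group with $\widetilde H^*$ of a quotient space; more useful here, though, is to run duality on the open manifold $\cX = F_n(\mathring{\M})$, which is a $2n$-manifold without boundary, as was done in the main text for $H^*(F_n(\mathring\M))$. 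The key is that $F_n(\M')$ is homotopy equivalent to $F_n(\mathring\M)$, and the pair $(F_n(\M'),F_{n,1}(\M',\I))$ is the pair appropriate to imposing the ``at least one point on $\I$'' condition; dually this corresponds on the compactified side to collapsing configurations that escape through $\del\M\setminus\{p_0\}$ versus those that escape through a neighborhood of $p_0$.

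The main steps, in order, would be: (1) set up $F_n(\mathring\M)^\infty$ as the quotient $\M^n/(\Delta_n(\M)\cup A'_n(\M))$ as in Definition \ref{defn:Apn}, so that $H^i(F_n(\mathring\M)) \cong H_{2n-i}(\M^n, \Delta_n(\M)\cup A'_n(\M))$; (2) observe dually that $F_n(\M')$, being $\mathring\M \cup \I$, has a relative-pair structure where the subspace $F_{n,1}(\M',\I)$ of configurations with a point on $\I$ plays the role of the part of the boundary we do \emph{not} collapse — concretely, there is a homotopy equivalence of pairs relating $(F_n(\M'),F_{n,1}(\M',\I))$ to the pair obtained by deleting from $\M^n$ only the locus where a point hits $p_0$, while allowing points on the rest of $\del\M$; (3) apply Lefschetz duality for the compact manifold-with-boundary $\M^n$ (or its configuration analogue), with the relevant part of the boundary being $A_n$ vs $A'_n$, to get $H_n(\M^n,\Delta_n(\M)\cup A_n(\M))^* \cong H_n$ of the ``complementary'' pair; (4) identify that complementary pair with $(F_n(\M'),F_{n,1}(\M',\I))$ up to homotopy equivalence; (5) check that every map in sight is $\Gamma$-equivariant, using that homeomorphisms in $\Homeo(\M,\del\M)$ preserve $p_0$, $\del\M$, $\Delta_n$, and hence all the subspaces involved, together with the naturality of duality isomorphisms recorded in diagram \eqref{DiagramPLD}.

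The main obstacle I expect is step (2)--(4): getting the bookkeeping right on \emph{which} portion of the ``boundary at infinity'' of configuration space is being collapsed. The subtlety is that $\del\M$ is split by $p_0$ into the point $p_0$ and the open arc $\I$, and the pair $(\M^n, \Delta_n\cup A_n)$ (hitting $p_0$) is Lefschetz-dual to a pair in which configurations are allowed to run off the edge $\I$ but collapsed near $p_0$ — and one must verify this matches $(F_n(\M'), F_{n,1}(\M',\I))$ rather than, say, its complement or a shifted-degree version. Concretely I would use the excision isomorphism $H_n(\M^n,\Delta_n\cup A_n) \cong \widetilde H_n(\M^n/(\Delta_n\cup A_n))$ and the duality between collapsing $A_n$ and ``un-collapsing'' it on the dual side, being careful that the degree $n$ (rather than $2n-n$, which is also $n$, a happy coincidence that actually makes the statement clean) is consistent. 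The degree count — $\dim_\R F_n(\mathring\M) = 2n$, and both Moriyama's group and the target sit in degree $n$ — is exactly what makes Lefschetz duality produce a self-dual-looking statement, and I would double-check this is not hiding an orientation or sign issue by tracking the fundamental class of $\M^n$ relative to its boundary. Once the pairing is correctly identified, naturality is routine given the functoriality discussion of Subsection \ref{subsec:functoriality}.
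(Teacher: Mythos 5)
There is a genuine gap at the very first step, before any duality is applied: you need to get from $\Mor_n^* = \Hom\bigl(H_n(\M^n,\Delta_n(\M)\cup A_n(\M)),\Z\bigr)$ to a cohomology group, and Poincar\'e--Lefschetz duality cannot do this for you. Duality pairs homology with \emph{cohomology}, not with the linear dual of homology; your step (3), which asserts that duality yields ``$H_n(\M^n,\Delta_n(\M)\cup A_n(\M))^* \cong H_n$ of the complementary pair,'' silently conflates the two. Over $\Z$ the identification $H_n(-)^*\cong H^n(-)$ requires the universal coefficient theorem, whose $\mathrm{Ext}$ term vanishes here only because of Moriyama's Theorem~\ref{thm:Moriyama}: $H_i(\M^n,\Delta_n(\M)\cup A_n(\M))$ is free abelian and concentrated in degree $n$. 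The paper's proof opens with exactly this UCT step, and without citing that input your argument does not get off the ground.

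Once that is supplied, the remaining bookkeeping you flag as ``the main obstacle'' is resolved more directly than your steps (1)--(2) and (4) suggest, and your detour through $F_n(\mathring{\M})^\infty$ is a wrong turn: the homotopy equivalence $F_n(\M')\simeq F_n(\mathring{\M})$ does not respect the pair structure (there is no analogue of $F_{n,1}(\M',\I)$ inside $F_n(\mathring{\M})$, which is a manifold \emph{without} boundary), so duality on $F_n(\mathring{\M})$ will never produce the pair $(F_n(\M'),F_{n,1}(\M',\I))$. The correct chain is: $H^n(\M^n,\Delta_n(\M)\cup A_n(\M))\cong H^n_c\bigl(\M^n\setminus(\Delta_n(\M)\cup A_n(\M))\bigr)$, and the complement is \emph{literally} $F_n(\M')$, since removing $\Delta_n\cup A_n$ from $\M^n$ leaves configurations of distinct points in $\M\setminus\{p_0\}=\M'$; then $F_n(\M')$ is a $2n$-manifold whose boundary is precisely $F_{n,1}(\M',\I)$, and Lefschetz duality for this manifold-with-boundary gives $H^n_c(F_n(\M'))\cong H_n(F_n(\M'),F_{n,1}(\M',\I))$. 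Your worry about ``which part of the boundary is collapsed'' evaporates because $A'_n$ plays no role in this lemma. Naturality under $\Gamma$ is, as you say, routine.
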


\begin{proof}
Since $H_i (\M^n,\Delta_n(\M)\cup A_n(\M))$ vanishes for $i \neq n$, the universal coefficient theorem provides an isomorphism
\[
\Mor_n^*=H_n (\M^n,\Delta_n(\M)\cup A_n(\M))^* \cong H^n(\M^n,\Delta_n(\M)\cup A_n(\M)).
\]
Note that
\[
H^n(\M^n,\Delta_n(\M)\cup A_n(\M)) \cong H^n_c(\M^n \setminus ( \Delta_n(\M)\cup A_n(\M)))=H^n_c(F_n(\M')).
\]
The space $ F_n(\M')$ is a $2n$-manifold with boundary $F_{n,1}(\M',\I)$; by Poincar\'e--Lefschetz duality we obtain an isomorphism
\[
H^n_c(F_n(\M')) \cong H_n(F_n(\M'),F_{n,1}(\M',\I)) .
\]
This establishes the claim. \end{proof}

\subsection{The semi-simplicial space \texorpdfstring{$\bigsqcup_{|S|=\bullet+1} F_{n,S}(\M')$}{U(|S|=bullet+1) F(n,S)(M')}}
Given a set $S \subseteq \{1,\ldots, n\}$, let $F_{n,S}(\M',\I)$ denote the subspace of $F_{n,S}(\M')$ containing
all configurations in which all particles with labels in $S$ lie in $\I$. For $T \subset S$, there are inclusion maps
\[
F_{n,S}(\M') \to F_{n,T}(\M').
\]
These inclusion maps give the assignment
\[
p \longmapsto \bigsqcup_{|S|=p+1} F_{n,S}(\M')
\]
the structure of an augmented semi-simplicial space which we denote by 
\[
\bigsqcup_{|S|=\bullet+1} F_{n,S}(\M') .
\]
Note that the space of $(-1)$-simplices is exactly $F_n(\M')$. The augmentation map from $0$-simplices to $(-1)$-simplices factors as
\[
\bigsqcup_{|S|=0+1} F_{n,S}(\M') \to F_{n,1}(\M',\I) \to F_n(\M').
\]

\begin{lem}
The  map $\Big|\!\Big|\bigsqcup_{|S|=\bullet+1} F_{n,S}(\M') \Big|\!\Big| \to F_{n,1}(\M')$ is a weak equivalence. 
\end{lem}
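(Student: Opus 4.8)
The plan is to recognise the augmented semi-simplicial space as the \v{C}ech nerve of a closed cover of $F_{n,1}(\M',\I)$ and then to show that this cover is homotopically excisive, so that its realisation recovers the space.

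For the combinatorial reduction: if $\emptyset\neq S\subseteq\{1,\dots,n\}$ then $F_{n,S}(\M',\I)=\bigcap_{i\in S}F_{n,\{i\}}(\M',\I)$, the face maps of $\bigsqcup_{|S|=\bullet+1}F_{n,S}(\M',\I)$ are the inclusions $F_{n,S}(\M',\I)\hookrightarrow F_{n,T}(\M',\I)$ for $T\subsetneq S$, and $F_{n,1}(\M',\I)=\bigcup_{i=1}^{n}F_{n,\{i\}}(\M',\I)$. Hence the augmented semi-simplicial space is the \v{C}ech nerve of the closed cover $\mathcal A=\{F_{n,\{i\}}(\M',\I)\}_{i=1}^{n}$, and its thick realisation is a standard model for the homotopy colimit of the diagram $S\mapsto F_{n,S}(\M',\I)$ of iterated intersections over the poset of non-empty subsets of $\{1,\dots,n\}$ under reverse inclusion. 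So the lemma amounts to the assertion that the evident map $\operatorname{hocolim}_{\emptyset\neq S}F_{n,S}(\M',\I)\to F_{n,1}(\M',\I)$ is a weak equivalence, i.e.\ that $\mathcal A$ is an excisive closed cover.

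For the excisiveness I would use the manifold-with-corners structure. Since $\M'=\M\setminus\{p_0\}$ is a smooth surface with boundary $\I$, the product $(\M')^{n}$ is a smooth $2n$-manifold with corners, and $F_n(\M')$ is the open submanifold obtained by deleting the diagonals; so $F_n(\M')$ is a $2n$-manifold with corners, with boundary $F_{n,1}(\M',\I)$ (the statement recorded in the proof of Lemma~\ref{lem:naturalisoMoriyama}, refined to account for corners), with facets the $F_{n,\{i\}}(\M',\I)$, and with codimension-$|S|$ corner strata the $F_{n,S}(\M',\I)$. A compatible system of collars --- part of the standard structure theory of manifolds with corners --- gives mutually compatible neighbourhoods $F_{n,S}(\M',\I)\times[0,1)^{S}$ of the corners; restricting to $\del F_n(\M')$, one sees that the triple $\bigl(F_{n,1}(\M',\I);F_{n,\{1\}}(\M',\I),\dots,F_{n,\{n\}}(\M',\I)\bigr)$ is locally everywhere modelled on the boundary of a corner chart with its coordinate faces, for which the nerve statement is an elementary verification; a Mayer--Vietoris induction on the number of facets, using the collars to deformation retract neighbourhoods of the lower strata, globalises it. Equivalently, one may simply quote the general principle that the boundary of a manifold with corners is canonically the homotopy colimit of its closed faces.

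The one non-formal ingredient, and the step I expect to need the most care, is exactly this excisiveness of $\mathcal A$: the $F_{n,\{i\}}(\M',\I)$ are closed and of varying local codimension in $F_{n,1}(\M',\I)$, so the open-cover nerve theorem does not apply and non-emptiness of the iterated intersections is not enough --- it is the collar structure of the corners of $F_n(\M')$ (equivalently, that $\I\subset\M'$ is a smoothly collared boundary) that makes the closed cover behave like an open one on realisations. The remaining steps are formal: the identification of the \v{C}ech nerve with a homotopy colimit, and the comparison of thick realisations with homotopy colimits.
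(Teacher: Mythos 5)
Your proposal is correct and follows essentially the same route as the paper: both identify $\bigsqcup_{|S|=\bullet+1}F_{n,S}(\M')$ as the nerve of the closed cover of $F_{n,1}(\M',\I)$ by the faces $F_{n,\{i\}}(\M',\I)$ (with $F_{n,S}=\bigcap_{i\in S}F_{n,\{i\}}$) and use the collar of $\I$ in $\M'$ to reduce to the open-cover nerve theorem. The paper carries out your ``compatible collars'' step explicitly by defining open thickenings $U_S\supseteq F_{n,S}(\M',\I)$ inside $F_n(\M')$ — configurations whose $S$-labelled points lie in the collar $[0,1)\times\I$ with the extra condition that no other point sits on the collar segment below them, which is precisely the collision issue your appeal to the general structure theory of manifolds with corners quietly absorbs — and then deformation retracts $U_S$ onto $F_{n,S}(\M',\I)$ levelwise.
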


\begin{proof}
Fix an embedding $[0,1) \times \I$ into $\M'$ sending $\{0\} \times \I$ to $\I$. Let $U_S \subset F_n(\mathring{\M})$ be the subspace of configurations satisfying the following properties:
\begin{itemize}
 \item each point of the configuration with label $i\in S$ is of the form $(t_i,\iota_i) \in [0,1) \times \I $;
 \item for all $i\in S$, no point of the configuration lies on the half-open segment $[0,t_i) \times \{\iota_i\}\subset [0,1)\times\I$.
\end{itemize}
Note that the subspaces $U_{\{i\}}$ form an \emph{open} cover of $F_{n,1}(\M',\I)$, and the nerve of this cover is weakly equivalent to $F_{n,1}(\M',\I)$. Note also that $U_S=\bigcap_{i\in S}U_{\{i\}}$,
hence the nerve of the open cover by the subspaces $U_{\{i\}}$ is the semi-simplicial space
$\bigsqcup_{|S|=\bullet+1} U_S$.

The semi-simplicial space $\bigsqcup_{|S|=\bullet+1} F_{n,S}(\M')$ is the nerve of the cover of the space $F_{n,1}(\M',\I)$ by the subspaces $F_{n,\{i\}}(\M',\I)$; again we have $F_{n,S}(\M')=\bigcap_{i\in S}F_{n,\{i\}}(\M',\I)$.

For all $S$, we have that $F_{n,S}(\M',\I)$ is a subspace of $U_S$ and the inclusion is a homotopy equivalence: a deformation retraction of $U_S$ onto $F_{n,S}(\M',\I)$ is given by pushing each point
$(t_i,\iota_i)$ with label $i\in S$ along the segment $[0,t_i] \times \{\iota_i\}$.
It follows that the inclusion of semi-simplicial spaces
\[
\bigsqcup_{|S|=\bullet+1} F_{n,S}(\M') \subset \bigsqcup_{|S|=\bullet+1} U_S
\]
is a level-wise weak equivalence, yielding a weak equivalence on (thick) geometric realizations. \end{proof}

\subsection{The arc resolution of configuration spaces}
We now recall the arc resolution of \cite{KMcell,MW1}. Given two smooth manifolds $A$ and $B$, let $\Emb(A,B)$ denote the space of smooth embeddings, topologised with the Whitney $C^{\infty}$-topology.

\begin{defn}Let
\[ 
\Arc_p(F_n(\M)) \subset F_n(\mathring \M) \times \Emb(\sqcup_{p+1} [0,1], \M)
\]
be the subspace of sequences of points and smooth arcs $(x_1,\ldots, x_n;\alpha_0,\ldots, \alpha_p)$ satisfying the following conditions:
\begin{itemize} 
  \item $\alpha_i(0) \in \gamma([0,1])$ and $\alpha_i$ is transverse to $\del\M$ at $0$;
  \item $\alpha_i(1) \in \{x_1,\ldots, x_n\}$;
  \item $\alpha_i(t) \notin \del \M \cup \{x_1,\ldots, x_n\}$ for $t \in (0,1)$;
  \item $\alpha_{i}(0)>\alpha_{j}(0)$ whenever $i >j$, using a fixed identification $\I \cong (0,1)$.
\end{itemize}
An element of $\Arc_p(F_n(\M))$ is shown in Figure \ref{ArcResolution}. 
\begin{figure}[!h]    \centering
\includegraphics[scale=.35]{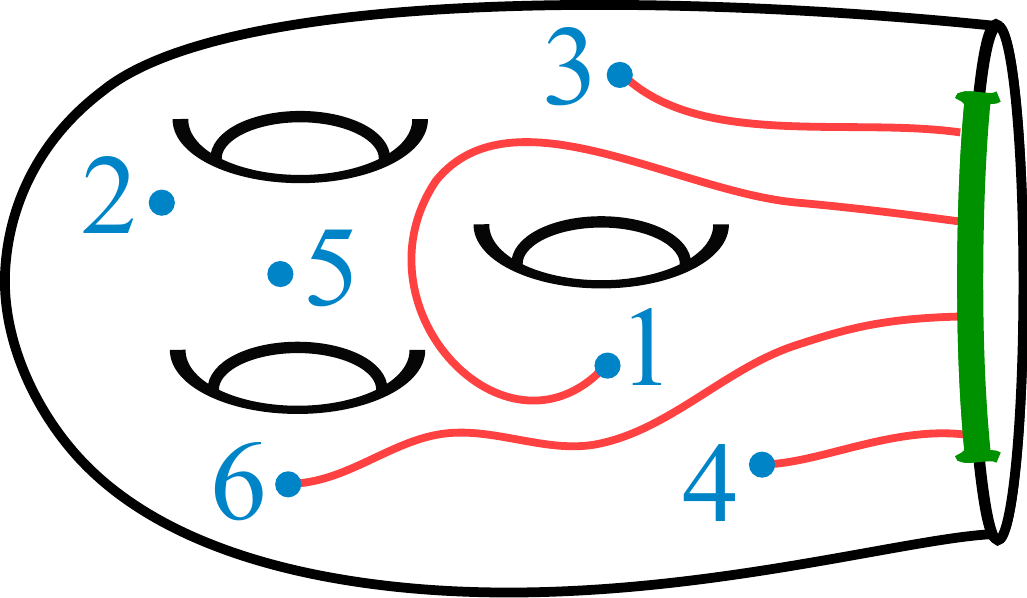} \\ \quad \\ 
\caption{A point in $\Arc_3(F_6(\M))$}
\label{ArcResolution}
\end{figure}  

Let $\Arc^h_p(F_n(\M))$ denote the quotient of $\Arc_p(F_n(\M))$
by the following equivalence relation: $(x_1,\ldots x_n;\alpha_0,\ldots \alpha_p)$ is equivalent
to $(x_1,\ldots, x_n;\alpha'_0,\ldots \alpha'_p)$ if there is a smooth ambient isotopy of
$\M$ fixing at all times
$\set{x_1,\ldots,x_n}$, preserving at all times $\I$, and bringing
$\alpha_i$ to $\alpha'_i$ for all $0\le i\le p$.
We denote by
\[
(x_1,\ldots,x_k;[\alpha_0,\ldots,\alpha_p])\in\Arc_p^h(F_n(\M))
\]
the equivalence class of $(x_1,\ldots x_n;\alpha_0,\ldots \alpha_p)$.
\end{defn}

As $p$ varies, the spaces $\Arc_p(F_n(\M))$ assemble into an augmented semi-simplicial space.
The $i$\textsuperscript{th} face map $d_i: \Arc_p(F_n(\M)) \to \Arc_{p-1}(F_n(\M))$ is given by forgetting the $i$\textsuperscript{th} arc $\alpha_i$. The space $\Arc_{-1}(F_n(\M))$ is homeomorphic to $F_n(\M)$, and so the augmentation map induces a map $|\!|\Arc_\bullet(F_n (\M))|\!| \to F_n(\M)$.

Similarly, the spaces $\Arc^h_p(F_n(\M))$ assemble into an augmented semi-simplicial space;
moreover we have $\Arc^h_{-1}(F_n(\M))\cong \Arc_{-1}(F_n(\M))\cong F_n(\M)$, and we have a natural
map of augmented semi-simplicial spaces
$\Arc_\bullet(F_n(\M))\to \Arc_\bullet^h(F_n(\M))$, given levelwise by the quotient map.

\begin{lem}
There is a zig-zag of level-wise weak equivalences of augmented semi-simplicial spaces between
\[
\bigsqcup_{|S|=\bullet+1} F_{n,S}(\M') \text{ and } \Arc_\bullet(F_n(\M)).
\]
\end{lem}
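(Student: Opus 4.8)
The plan is to build the zig-zag out of two geometrically transparent maps: one that pushes configurations off the boundary and records the resulting collar strands as arcs, and the quotient to the homotopy arc complex $\Arc^h_\bullet(F_n(\M))$, so that all maps are semi-simplicial on the nose and the only substantial input is a contractibility statement for spaces of arcs.

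Concretely, I would first fix a collar $c\colon \I\times[0,1)\hookrightarrow\M'$ with $c(\iota,0)=\iota$ and $c(\I\times(0,1))\subset\mathring{\M}$. For a configuration $\ux$ lying in one of the spaces $F_{n,S}(\M')$, I would flow all of its points inward along this collar by a small amount $\epsilon(\ux)>0$, chosen to depend continuously on $\ux$, to keep the flowed configuration embedded, and --- crucially --- to be computed by a formula that is symmetric in the $n$ labels; for each point that was constrained to lie on $\I$, the segment it traces out is an embedded arc meeting $\I$ only at its initial point, and these $|S|$ arcs are pairwise disjoint, so the construction produces a point of $\Arc_{|S|-1}(F_n(\M))$. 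Because $\epsilon$ does not depend on which labels belong to $S$, this assignment commutes with the face maps --- forgetting an arc on the $\Arc$ side, forgetting that a point is pinned to $\I$ on the other side --- and with the augmentations, and so defines a semi-simplicial map $\rho_\bullet$ into $\Arc_\bullet(F_n(\M))$, as well as a semi-simplicial map $\bar\rho_\bullet=q_\bullet\circ\rho_\bullet$ into $\Arc^h_\bullet(F_n(\M))$, where $q_\bullet$ is the quotient by ambient isotopy. The earlier nerve-of-covers description identifies the source, so it remains to prove that $\rho_\bullet$ (equivalently $q_\bullet$ and $\bar\rho_\bullet$) is a level-wise weak equivalence.

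For $p=-1$ the map $\rho_{-1}\colon F_n(\M')\to F_n(\mathring{\M})$ is the standard ``push off the boundary'' homotopy equivalence. For $p\ge 0$ I would argue one path-component at a time, over a fixed pattern of which labelled point each arc hits and in what order along $\I$. The heart of the matter is the claim that $q_p\colon\Arc_p(F_n(\M))\to\Arc^h_p(F_n(\M))$ is a weak equivalence: this reduces, by a parametrized isotopy-extension argument over $\Arc^h_p(F_n(\M))$, to the fact that for a fixed configuration the space of embedded representatives of a fixed isotopy class of an ordered, pairwise disjoint arc system --- each arc pinned at a configuration point, initial point free on $\I$, interior avoiding $\del\M$ and the configuration --- is contractible, which is a configuration-parametrized version of the contractibility of arc spaces proved by Gramain and Hatcher. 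Granting this, one then checks that $\bar\rho_p$ is a weak equivalence by a componentwise comparison: point-pushing makes $\pi_1 F_n(\mathring{\M})$ act transitively on isotopy classes of arc systems realizing the fixed pattern, matching the path-components of the corresponding collar configuration space, and on fundamental groups both sides are assembled, after deleting the arc/boundary strands, from $F_{n-p-1}(\mathring{\M})$. Two-out-of-three then gives that $\rho_\bullet$ is a level-wise weak equivalence, completing the proof.

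The step I expect to be the main obstacle is exactly the contractibility input for spaces of arcs in the presence of the $n$ moving configuration points and of the linear-ordering constraint along $\I$; getting $\rho_\bullet$ to be strictly semi-simplicial, once $\epsilon(\ux)$ is chosen label-symmetrically, and matching $\Arc^h_\bullet(F_n(\M))$ with the nerve model are comparatively routine.
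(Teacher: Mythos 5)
Your route is genuinely different from the paper's, and as written it has a gap at its central step. The paper does not analyze components or fundamental groups at all: it introduces a ``Moore'' semi-simplicial space $\overline{\Arc}_\bullet(F_n(\M))$ whose $p$-simplices carry arcs defined on variable-length intervals $[0,t_i]$, and exhibits $\Arc_\bullet(F_n(\M))$ (all $t_i=1$) and $\bigsqcup_{|S|=\bullet+1}F_{n,S}(\M')$ (all $t_i=0$) as levelwise deformation retracts of it --- shrinking arcs toward $\I$ in one direction, pushing $\I$ into the interior and extending arcs in the other. In particular the Gramain-type contractibility of arc spaces, which you single out as the main obstacle, is not needed for this lemma at all; the paper invokes it only in the next lemma, when comparing $\Arc_\bullet(F_n(\mathring{\M}))$ with $\cA_n(\M)_\bullet/\!/\PBr_n$.

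Your direct map $\rho_\bullet$ (flow inward along a collar, record the traced strands as arcs) is well defined and semi-simplicial, and the reduction via $q_p\colon\Arc_p(F_n(\M))\to\Arc^h_p(F_n(\M))$ together with two-out-of-three is legitimate in principle. The gap is that the assertion that $\bar\rho_p$ is a weak equivalence is exactly where the content of the lemma lives, and you do not prove it. Since $\Arc^h_p(F_n(\M))$ is a covering space of the aspherical space $F_n(\mathring{\M})$, you must show (i) that $\pi_1(F_n(\mathring{\M}))$ acts transitively on isotopy classes of arc systems with a fixed pattern (which points are hit, in which order along $\I$), so that $\pi_0$ matches the set of pairs $(S,\text{ordering})$ indexing the components of the source; and (ii) that on each component the map from $\pi_1(F_{n,S}(\M'))\cong\pi_1(F_{n-p-1}(\mathring{\M}))$ onto the stabilizer in $\pi_1(F_n(\mathring{\M}))$ of the collar arc-system class is an isomorphism --- surjectivity being the statement that any braid stabilizing that class can be combed so that the $S$-strands remain in a collar. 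Neither is an off-the-shelf fact, and the natural proofs of both go through precisely the arc-shrinking retraction that the paper's Moore construction packages; so either supply these arguments in full or switch to the interpolation argument.
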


\begin{proof}
We introduce a ``Moore version'' $\overline{\Arc}_\bullet(F_n(\M))$ of the semi-simplicial space 
$\Arc_\bullet(F_n(\M))$. For $p\ge0$
we define $\overline{\Arc}_p(F_n(\M))$ as the space of sequences
\[
(x_1,\ldots,x_n;(\alpha_0,t_0),\dots,(\alpha_p,t_p)),
\]
where:
\begin{itemize}
 \item $(x_1,\ldots,x_n)\in F_n(\M')$;
 \item $t_0,\dots,t_p$ are elements of the interval $[0,1]$;
 \item $\alpha_i$ is an embedding of the (possibly degenerate) segment $[0,t_i]$ in $\M'$,
 with $\alpha_i(0)\in\I$, $\alpha_i$ transverse to $\del\M$ at 0 whenever $t_i>0$,
 $\alpha_i(s)\in\mathring{\M}$ for $0<s\le t_i$, and $\alpha_i(t_i)\in\set{x_1,\dots,x_n}$;
 we further require that the images of the embeddings $\alpha_0,\dots,\alpha_p$ are disjoint;
  \item $\alpha_{i}(0)>\alpha_{j}(0)$ whenever $i >j$, using a fixed identification $\I \cong (0,1)$.
\end{itemize}
We obtain a zig-zag of levelwise weak equivalences of semi-simplicial spaces
\[
\Arc_\bullet(F_n(\M)) \to \overline{\Arc}_\bullet(F_n(\M)) \leftarrow  \bigsqcup_{|S|=\bullet+1} F_{n,S}(\M')),
\]
by regarding $\Arc_p(F_n(\M))$ as the subspace of $\overline{\Arc}_p(F_n(\M))$ containing sequences for which
all $t_i$'s are equal to $1$, and by regarding $\bigsqcup_{|S|=p+1} F_{n,S}(\M'))$ as the subspace
containing sequences for which all $t_i$'s are equal to $0$.

The space $\overline{\Arc}_p(F_n(\M))$ deformation retracts onto $\bigsqcup_{|S|=p+1} F_{n,S}(\M'))$ by pushing the point
$$(x_1,\ldots,x_n;(\alpha_0,t_0),\dots,(\alpha_p,t_p))$$ 
along the path
$$(\alpha_0((1-s)t_0),\ldots,\alpha_p((1-s)t_p);(\alpha_0|_{[0, (1-s)t_0]},(1-s)t_0),\dots,(\alpha_p|_{[0,(1-s)t_p]},(1-s)t_p)).$$ 
As the parameter $s$ varies from $0$ to $1$ all arcs degenerate to constant paths in $\I$.

Similarly, $\overline{\Arc}_p(F_n(\M))$ can be deformation
retracted onto $\Arc_p(F_n(\M))$ by pushing all values $t_i$ to $1$. 
 Precisely, we first choose a path $\phi_t$ of embeddings  $\M \to \M$ starting at the identity and pushing $\I$ in the interior of $\M$. We then use $\phi_t$ to convert each arc $\alpha_i$ defined on an interval $[0,t_i]$ with $t_i\le 1$, into a new arc $\alpha'_i$ defined on $[0,1]$: we push the image of $\alpha_i$ in the interior of $\M$ using $\phi_t$, and concatenate the  path  $t \mapsto \phi_t(\alpha_i(0))$ for $t \in [0, 1-t_i]$ to obtain  an arc beginning on $\I$ with parameter $1$.
By standard smoothing techniques one can improve the
previous construction so that it yields smooth arcs (also near the concatenation point).
\end{proof}

\subsection{The arc complex of Hatcher--Wahl}
We now recall a semi-simplicial version of the arc complex of Hatcher--Wahl \cite[Section 7]{hatcherwahl} (see also Randal-Williams--Wahl \cite[Section 5.6.2]{RWW}).
Fix distinct points $P_1,\ldots, P_n \in \mathring{\M}$.

\begin{defn}
We define a semi-simplicial set $\mathcal A_n(\M)_\bullet$. The set of $p$-simplices
contains isotopy classes $[\alpha_0,\dots,\alpha_p]$
of collections of disjoint smooth arcs $\alpha_i:[0,1] \to \M'$, for $i=0,\ldots,p$, such that:
\begin{itemize}
\item $\alpha_i(0)\in\I$ and $\alpha_i$ is transverse to $\del\M$ at 0;
\item $\alpha_i(t)\in\mathring{\M}$ for $t>0$;
\item $\alpha_i(1) \in \{P_1,\ldots, P_n \} $;
\item $\alpha_{i}(0)>\alpha_{j}(0)$ whenever $i >j$, using a fixed identification $\I\cong(0,1)$.
\end{itemize}
The face map $d_i$ is given by forgetting the $i$\textsuperscript{th} arc. \end{defn}
An element is shown in Figure \ref{ArcResolutionLine}.
\begin{figure}[!h]    \centering
\includegraphics[scale=.35]{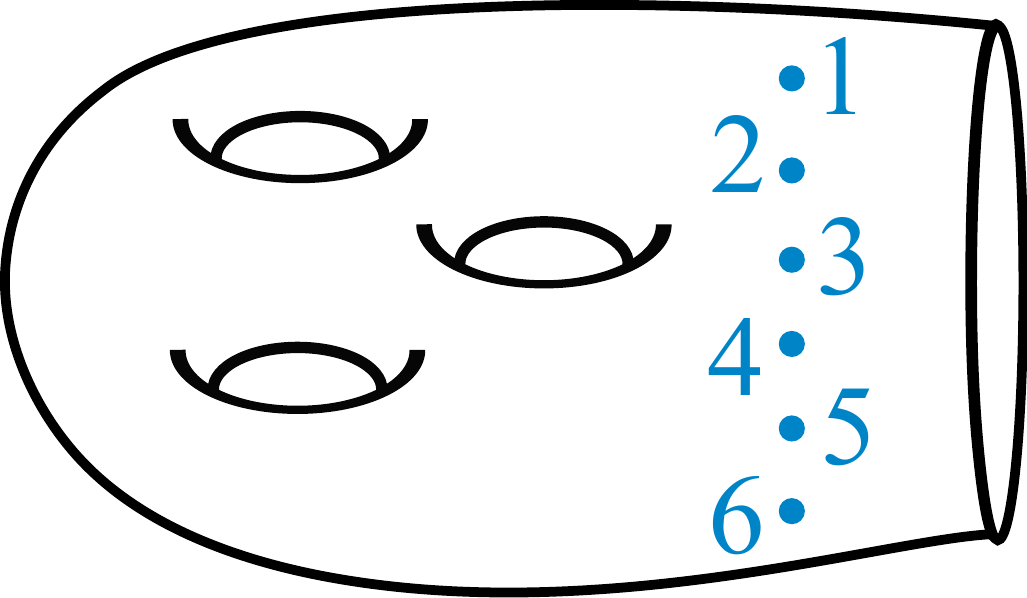} \qquad 
\includegraphics[scale=.35]{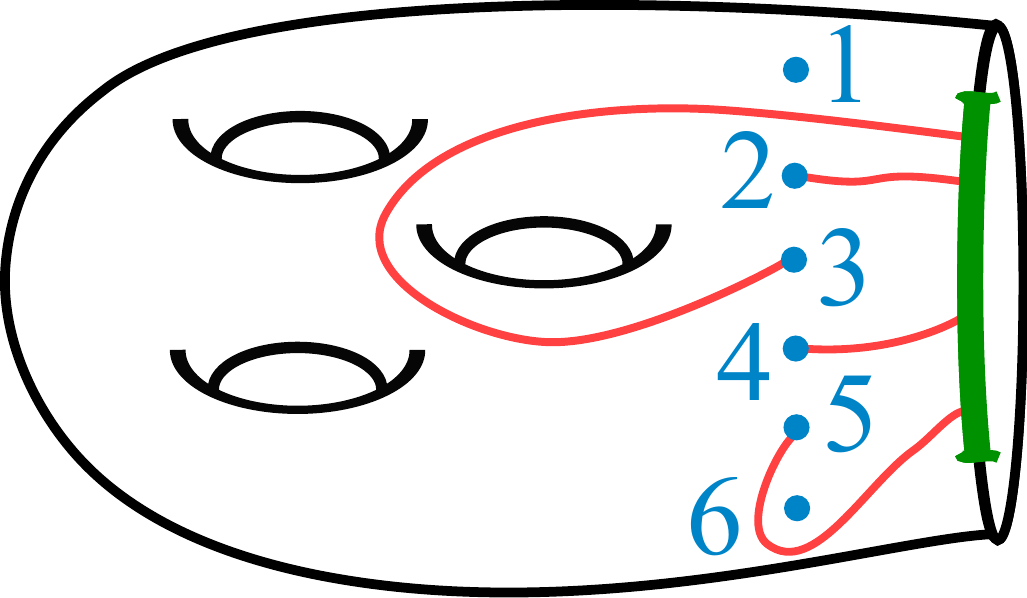} 
\caption{The distinguished points $\{P_1, \ldots, P_n\}$ and a point in $\mathcal A_6(\M)_3$.}
\label{ArcResolutionLine}
\end{figure}  

The semi-simplicial set $\cA_n(\M)_\bullet$ and the semi-simplicial space
$\Arc_\bullet^h(F_n(\M))$ are related as follows. Forgetting all arcs yields, for all $p\ge0$, a map
\[
\fp_p\colon \Arc_\bullet^h(F_n(\M))\to F_n(\mathring{\M}).
\]
The map $\fp_p$ is a covering map,
and $\cA_n(\M)_\bullet$ is defined as the fibre $\fp_p^{-1}(P_1,\dots,P_n)$.

\begin{defn}
 We let $\Diff_0(\M,\del\M)$ denote the connected component of $\Id_{\M}$ in $\Diff(\M,\del\M)$.
 We let $\Diff(\M,\del\M\cup\set{P_1,\dots,P_n})$ be the subgroup of $\Diff(\M,\del\M)$ containing
 diffeomorphisms $f\colon \M\to \M$ fixing $\del\M\cup\set{P_1,\dots,P_n}$ pointwise.

 We define
 \[
 \hPBr_n\colon =\Diff_0(\M,\del\M)\cap \Diff(\M,\del\M\cup\set{P_1,\dots,P_n})\subset\Diff(\M,\del\M).
 \]
\end{defn}
The reason for the notation $\hPBr_n$ is that the pure braid group $\PBr_n$ can be defined
as $\pi_0(\hPBr_n)$. A classical result by Earle and Schatz for mapping class groups of surfaces
with non-empty boundary \cite{EarleSchatz} ensures that
all groups $\Diff(\M,\del\M)$, $\Diff_0(\M,\del\M)$, $\Diff(\M,\del\M\cup\set{P_1,\dots,P_n})$
and $\hPBr_n$ have contractible connected components; moreover the pull-back diagram
\[
\begin{tikzcd}
 \hPBr_n\ar[d,hook]\ar[r,hook] &\Diff_0(\M,\del\M)\ar[d,hook]\\
 \Diff(\M,\del\M\cup\set{P_1,\dots,P_n})\ar[r,hook] & \Diff(\M,\del\M)
\end{tikzcd}
\]
gives, after applying $\pi_0$, a pull-back diagram of discrete groups
\[
\begin{tikzcd}
 \PBr_n\ar[d]\ar[r] & 1\ar[d]\\
 \Gamma(\M,\del\M\cup\set{P_1,\dots,P_n})\ar[r,twoheadrightarrow] & \Gamma,
\end{tikzcd}
\]
which is equivalent to the Birman short exact sequence.
 
Let $/\!/$ denote the homotopy quotient by an action of a (topological) group. We can view $\mathcal A_n(\M)_\bullet$ as an augmented semi-simplicial set with set of $(-1)$-simplices a single point. This lets us view $\mathcal A_n(\M)_\bullet/\!/\PBr_n$ as an augmented semi-simplicial space. Here we consider the natural
action of $\PBr_n=\pi_0(\hPBr_n)$ on each set of homotopy classes of collections of arcs $\cA_n(\M)_p$,
and take levelwise the homotopy quotient.
\begin{lem}
There is a zig-zag of weak equivalences of augmented semi-simplicial spaces between
$\mathcal A_n(\M)_\bullet/\!/\PBr_n$ and $\Arc_\bullet(F_n(\mathring{\M}))$.
\end{lem}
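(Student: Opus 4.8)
The plan is to relate the two augmented semi-simplicial spaces via the intermediate object $\Arc^h_\bullet(F_n(\mathring{\M}))$, using the covering map $\fp_\bullet\colon \Arc^h_\bullet(F_n(\M))\to F_n(\mathring{\M})$ whose fibre over $(P_1,\dots,P_n)$ is $\cA_n(\M)_\bullet$. The key input is the Earle--Schatz result, which gives that $\Diff_0(\M,\del\M)$ has contractible components and that the Birman fibration sequence holds on the nose at the level of groups; this lets us identify $\Arc^h_p(F_n(\M))$ as a homotopy quotient. First I would observe that $F_n(\mathring{\M})$ is a classifying space $B\widehat{\PBr}_n$ (using that $\Diff_0(\M,\del\M)$ acts freely and properly on $F_n(\mathring{\M})$ with contractible total space and stabiliser $\widehat{\PBr}_n$, again by Earle--Schatz), and that the covering map $\fp_p$ exhibits $\Arc^h_p(F_n(\M))$ as the Borel construction $\cA_n(\M)_p /\!\!/ \widehat{\PBr}_n$; since $\widehat{\PBr}_n$ has contractible components, this is weakly equivalent to $\cA_n(\M)_p /\!\!/ \PBr_n$. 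Compatibility of these identifications with face maps is immediate because all maps in sight are "forget an arc" maps, which are $\widehat{\PBr}_n$-equivariant.

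Concretely, the zig-zag I would write down is
\[
\Arc_\bullet(F_n(\mathring{\M})) \longrightarrow \Arc^h_\bullet(F_n(\mathring{\M})) \longleftarrow \cA_n(\M)_\bullet /\!\!/ \widehat{\PBr}_n \longrightarrow \cA_n(\M)_\bullet /\!\!/ \PBr_n,
\]
where the first map is the levelwise quotient map already constructed in the excerpt, the middle map is the one classifying the covering $\fp_\bullet$, and the last is induced by the deformation retraction $\widehat{\PBr}_n\to\PBr_n$. For the first map to be a levelwise weak equivalence I would argue that the fibres of $\Arc_p(F_n(\M))\to\Arc^h_p(F_n(\M))$ are the spaces of arcs in a fixed isotopy class relative to the configuration and $\I$; by the isotopy extension theorem and the contractibility of the relevant diffeomorphism components (Earle--Schatz again), such a space of arcs is contractible, so the quotient map is a weak equivalence. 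For the middle map, I would note that $\Arc^h_p(F_n(\M))\to F_n(\mathring{\M})$ is a covering map with fibre $\cA_n(\M)_p$ on which $\pi_1(F_n(\mathring{\M}))=\PBr_n$ acts by monodromy; since $F_n(\mathring{\M})\simeq B\widehat{\PBr}_n$ and the action lifts to a $\widehat{\PBr}_n$-action, the total space is the Borel construction, up to weak equivalence.

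Once each level is handled, compatibility with the augmentation is easy: the $(-1)$-simplices of $\Arc_\bullet$ and $\Arc^h_\bullet$ are both $F_n(\mathring{\M})$, those of $\cA_n(\M)_\bullet$ form a point, and $\mathrm{pt}/\!\!/\widehat{\PBr}_n \simeq B\widehat{\PBr}_n \simeq F_n(\mathring{\M})$, so the augmentations match up under the zig-zag. I expect the main obstacle to be the careful bookkeeping of basepoints and group actions needed to make the identification $\Arc^h_p(F_n(\M)) \simeq \cA_n(\M)_p /\!\!/ \widehat{\PBr}_n$ precise and natural in $p$ — in particular verifying that the monodromy action of $\PBr_n$ on $\cA_n(\M)_p$ induced by the covering $\fp_p$ agrees with the $\pi_0(\widehat{\PBr}_n)$-action used to form the homotopy quotient on the other side, and that this agreement is compatible with face maps. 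This is exactly the sort of statement where one wants to invoke that $\widehat{\PBr}_n$ has contractible components (so homotopy quotients by it and by $\PBr_n$ agree), and then the bulk of the argument is a diagram chase plus a single application of the isotopy extension theorem and of Earle--Schatz.
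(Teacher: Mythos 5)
Your proposal is correct and follows essentially the same route as the paper: the same intermediate object $\Arc^h_\bullet(F_n(\mathring{\M}))$, the same identification of $\Arc^h_p$ as a homotopy quotient of $\cA_n(\M)_p$ by $\hPBr_n$ via Earle--Schatz, and the same reduction from $\hPBr_n$ to $\PBr_n$. The only cosmetic difference is that where you sketch contractibility of the fibres of $\Arc_p(F_n(\M))\to\Arc^h_p(F_n(\M))$ via isotopy extension, the paper simply cites Gramain's theorem.
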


\begin{proof}
We can consider on $\cA_n(\M)_\bullet$ the levelwise action of the topological group $\hPBr_n$, which acts through its quotient $\PBr_n$. We consider the semi-simplicial space $\cA_n(\M)_\bullet/\!/\hPBr_n$, which is levelwise weakly equivalent to $\cA_n(\M)_\bullet/\!/\PBr_n$ because the projection of groups $\hPBr_n\to\PBr_n$ is a weak equivalence.

Next, we observe that for all $p\ge0$, the space
$\Arc^h_p(F_n(\mathring{\M}))$ is homeomorphic to the space
\[
(\Diff_0(\M,\del\M)\times \cA_n(\M)_p)/ \hPBr_n.
\]
Here $\psi\in\hPBr_n$ acts on $\Diff_0(\M,\del\M)$ by $f\mapsto f\circ\psi^{-1}$, and it acts
on $\cA_n(\M)_p$ by $[\alpha_0,\dots,\alpha_p]\mapsto [\psi\circ\alpha_0,\dots,\psi\circ\alpha_p]$.

There is a $\hPBr_n$-invariant map
\[
\rho_p\colon (\Diff_0(\M,\del\M)\times \cA_n(\M)_p)\to \Arc^h_p(F_n(\mathring(\M)))
\]
given by
\[
 \rho_p\colon \pa{f;[\alpha_0,\dots,\alpha_p]}\mapsto \pa{f(P_1),\dots,f(P_n);[f\circ\alpha_0,\dots,f\circ\alpha_p]}.
\]
The map $\rho$ induces a homeomorphism, also denoted
\[
\rho_p\colon (\Diff_0(\M,\del\M)\times \cA_n(\M)_p)/ \hPBr_n\cong \Arc^h_p(F_n(\mathring{\M})).
\]
The homeomorphisms $\rho_p$ give a levelwise homeomorphism of semi-simplicial spaces
\[
\rho\colon (\Diff_0(\M,\del\M)\times \cA_n(\M)_\bullet)/ \hPBr_n\cong \Arc^h_\bullet(F_n(\mathring{\M})).
\]
Since the action of $\hPBr_n$ on $\Diff_0(\M,\del\M)$ is free and admits local sections, and since the topological group
$\Diff_0(\M,\del\M)$ is contractible, the semi-simplicial space
\[
(\Diff_0(\M,\del\M)\times \cA_n(\M)_\bullet)/ \hPBr_n
\]
is a model for the homotopy
quotient $\cA_n(\M)_\bullet/\!/\hPBr_n$.

To complete the proof, we note that the quotient map
$\Arc_p(F_n(\M))\to\Arc_p^h(F_n(\M))$ is also a weak equivalence for all $p\ge0$: this uses
that the quotient map is a fibre bundle, and that the fibre, i.e.
the space of collections of arcs $(\alpha_0,\dots,\alpha_p)$ in a given homotopy
class $[\alpha_0,\dots,\alpha_p]\in\Arc_p^h(F_n(\M))$, is contractible
by Gramain \cite[Theorems 5 and 6]{Gramain}.
\end{proof}

A similar result for unordered configuration spaces is implicit in work of Krannich \cite[Theorem 5.5 and Section 7.3]{Kra}.

\subsection{Moriyama's groups as twisted group homology}
\begin{lem}
There is an isomorphism
\[
H_0(\PBr_n(\M);\tilde H_{n-1}(\mathcal A_n(\M)_\bullet )) \cong H_n\pa{B\PBr_n,|\!| \mathcal A_n(\M)_\bullet/\!/\PBr_n|\!|}.
\]
\end{lem}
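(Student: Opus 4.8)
The plan is to recognize both sides as the single surviving entry, in the appropriate total degree, of a spectral sequence of Borel-construction type, the collapse being forced by the connectivity of the arc complex of Hatcher--Wahl.

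Write $G=\PBr_n$ and let $X=|\!| \mathcal A_n(\M)_\bullet|\!|$ be the geometric realization of the semi-simplicial \emph{set} $\mathcal A_n(\M)_\bullet$, carrying its $G$-action, so that by definition $\tilde H_{n-1}(\mathcal A_n(\M)_\bullet)=\tilde H_{n-1}(X)$ as $\Z[G]$-modules. Since forming homotopy orbits by $G$ commutes with geometric realization of semi-simplicial objects, the realization $|\!| \mathcal A_n(\M)_\bullet/\!/\PBr_n|\!|$ of the semi-simplicial space underlying $\mathcal A_n(\M)_\bullet/\!/G$ is the Borel construction $X\times_G EG$; moreover, since the $(-1)$-simplices of the augmented object $\mathcal A_n(\M)_\bullet/\!/G$ form $*/\!/G=BG$, the augmentation map is identified with the bundle projection $X\times_G EG\to BG$ whose fibre is $X$. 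Consequently the right-hand side $H_n\big(B\PBr_n,|\!| \mathcal A_n(\M)_\bullet/\!/\PBr_n|\!|\big)$ is the reduced homology in degree $n$ of the mapping cone of $X\times_G EG\to BG$.

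Next I would run the relative Cartan--Leray spectral sequence of this map. The mapping cone of the augmentation $C_*^{\sing}(X)\to\Z$ of complexes of $\Z[G]$-modules has $k$-th homology $\tilde H_{k-1}(X)$, and its $G$-hyperhomology computes the reduced homology of $\mathrm{cone}(X\times_G EG\to BG)$; this produces a first-quadrant spectral sequence
\[
E^2_{p,q}=H_p\big(G;\tilde H_q(X)\big)\ \Longrightarrow\ H_{p+q+1}\big(B\PBr_n,|\!| \mathcal A_n(\M)_\bullet/\!/\PBr_n|\!|\big).
\]
The terms contributing to the abutment in degree $n$ are exactly those with $p+q=n-1$. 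Here the one external input enters: by Hatcher--Wahl \cite[Proposition 7.2]{hatcherwahl} (see also \cite{MW1,RWW}), the semi-simplicial set $\mathcal A_n(\M)_\bullet$ is $(n-2)$-connected, hence $\tilde H_q(X)=0$ for $q\le n-2$. Therefore the only nonzero term with $p+q=n-1$ is $E^2_{0,n-1}=H_0\big(G;\tilde H_{n-1}(X)\big)$, and the same vanishing kills all differentials into or out of that bidegree: an incoming $d^r$ with $r\ge 2$ would originate at $E^r_{r,n-r}$, which vanishes because $n-r\le n-2$, while an outgoing one would land in negative filtration. Thus $E^\infty_{0,n-1}=E^2_{0,n-1}$ and
\[
H_n\big(B\PBr_n,|\!| \mathcal A_n(\M)_\bullet/\!/\PBr_n|\!|\big)\ \cong\ H_0\big(\PBr_n;\tilde H_{n-1}(\mathcal A_n(\M)_\bullet)\big),
\]
which is the assertion. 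The degenerate cases $n=0$ (where $X=\emptyset$, so that $\tilde H_{-1}(\emptyset)=\Z$ is the relevant coefficient group) and $n=1$ (where ``$(n-2)$-connected'' only asserts non-emptiness) come out of the same bidegree count, since then there is a unique $(p,q)$ with $p+q=n-1$ and $E^2_{p,q}\ne 0$. Naturality of the isomorphism is inherited from that of the spectral sequence.

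The genuinely substantive ingredient is the connectivity statement for $\mathcal A_n(\M)_\bullet$; everything else is bookkeeping together with a routine collapse. I expect the only real care needed elsewhere is in justifying that the augmentation map of the semi-simplicial space is, up to weak equivalence, the Borel projection $X\times_G EG\to BG$, and in quoting \cite[Proposition 7.2]{hatcherwahl} in exactly the shape ``$\tilde H_q(\mathcal A_n(\M)_\bullet)=0$ for $q<n-1$''.
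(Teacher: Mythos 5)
Your argument is correct and is essentially the paper's own proof: the paper likewise identifies $H_i(B\PBr_n,|\!|\mathcal A_n(\M)_\bullet/\!/\PBr_n|\!|)$ with the (degree-shifted) hyperhomology of $\PBr_n$ with coefficients in the reduced chains on $|\!|\mathcal A_n(\M)_\bullet|\!|$, and then collapses the hyperhomology spectral sequence using Hatcher--Wahl's \cite[Proposition 7.2]{hatcherwahl}. You merely spell out the two steps the paper asserts more tersely (the Borel-construction/mapping-cone identification and the explicit bidegree bookkeeping for the collapse), so there is nothing to add.
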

\begin{proof}
There is a natural isomorphism 
\[
H_i(B\PBr_n,|\!| \mathcal A_n(\M)_\bullet/\!/\PBr_n|\!|) \cong
H_{i-1}\pa{\PBr_n ; \tCh^{\cell}_*( |\!| \mathcal A_n(\M)_\bullet |\!|)},
\]
where the right hand side denotes the hyperhomology of $\PBr_n$ with twisted coefficients
in the cellular chain complex $\tCh^{\cell}_*( |\!| \mathcal A_n(\M)_\bullet |\!|)$.

By Hatcher--Wahl \cite[Proposition 7.2]{hatcherwahl}, $\widetilde H_i(\mathcal A_n(\M)_\bullet )$ vanishes except for $i=n-1$. Thus the hyper-homology spectral sequence collapses, showing
\[
H_{i+n-1}\pa{\PBr_n ; \tCh^{\cell}_*( |\!| \mathcal A_n(\M)_\bullet |\!|)} \cong H_{i}(\PBr_n;\widetilde H_{n-1}(\mathcal A_n(\M)_\bullet )).
\]
Combining these two isomorphisms gives the result.\end{proof}

Stringing together all the previous lemmas gives the following theorem.

\begin{thm}
\label{thm:Moriyamaarccomplex}
There is an isomorphism
\[
\Mor_n^* \cong H_0(\PBr_n(\M);\widetilde H_{n-1}(\mathcal A_n(\M)_\bullet )).
\]
\end{thm}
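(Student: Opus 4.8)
The plan is to chain together the sequence of isomorphisms and zig-zags of weak equivalences established in the preceding lemmas of this appendix, reducing the claimed identification to the already-proven statement of Lemma \ref{lem:naturalisoMoriyama}. I would organize the argument as a telescoping identification, tracking what each intermediate object computes.

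First I would recall from Lemma \ref{lem:naturalisoMoriyama} that $\Mor_n^* \cong H_n(F_n(\M'),F_{n,1}(\M',\I))$, so it suffices to identify the latter relative homology group with $H_0(\PBr_n(\M);\widetilde H_{n-1}(\cA_n(\M)_\bullet))$. Next, using the lemma showing that $\big|\!\big|\bigsqcup_{|S|=\bullet+1} F_{n,S}(\M')\big|\!\big| \to F_{n,1}(\M',\I)$ is a weak equivalence, the pair $(F_n(\M'),F_{n,1}(\M',\I))$ has the homology of the cofiber of the augmented semi-simplicial space $\bigsqcup_{|S|=\bullet+1} F_{n,S}(\M')$ over $F_n(\M')$; concretely $H_n(F_n(\M'),F_{n,1}(\M',\I))$ is computed by the reduced homology in degree $n$ of this augmented realization, and since $F_n(\M')\simeq F_n(\mathring\M)$ is $(-1)$-connected this matches $\widetilde H_{n-1}$ of the (unaugmented) realization shifted appropriately. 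Then I would invoke the zig-zag of level-wise weak equivalences between $\bigsqcup_{|S|=\bullet+1} F_{n,S}(\M')$ and $\Arc_\bullet(F_n(\M))$, and the further zig-zag of weak equivalences between $\mathcal A_n(\M)_\bullet/\!/\PBr_n$ and $\Arc_\bullet(F_n(\mathring\M))$, to replace everything by $\|\cA_n(\M)_\bullet/\!/\PBr_n\|$. Finally, the last lemma of the subsection gives $H_0(\PBr_n(\M);\widetilde H_{n-1}(\cA_n(\M)_\bullet)) \cong H_n(B\PBr_n, \|\cA_n(\M)_\bullet/\!/\PBr_n\|)$, and a comparison of the augmentations identifies this relative homology group with the one computed above.

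The step I expect to require the most care is matching up the \emph{augmentations} and the resulting degree shifts consistently across the zig-zags. The semi-simplicial spaces appear in two guises — augmented over $F_n(\M')$ (equivalently over $F_n(\mathring\M)$, equivalently over $\Arc_{-1}$) and augmented over a point (in the case of $\cA_n(\M)_\bullet$) — and the relative homology groups $H_*(F_n(\M'),F_{n,1}(\M',\I))$ versus $H_*(B\PBr_n, \|\cA_n(\M)_\bullet/\!/\PBr_n\|)$ live over different bases. The key observation that reconciles them is that $F_n(\mathring\M) \simeq B\PBr_n$, under which the map $\|\Arc_\bullet(F_n(\mathring\M))\| \to F_n(\mathring\M)$ corresponds, via the homeomorphism $\Arc^h_p(F_n(\mathring\M)) \cong (\Diff_0(\M,\del\M)\times\cA_n(\M)_p)/\hPBr_n$ from the previous lemma, to the map $\|\cA_n(\M)_\bullet/\!/\PBr_n\| \to B\PBr_n$. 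I would spell this compatibility out explicitly, since it is what guarantees that the cofiber/relative-homology computations on the two sides agree and not merely the absolute homology of the realizations. Once the augmentations are matched, the theorem follows by stringing the displayed isomorphisms together.
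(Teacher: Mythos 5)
Your proposal is correct and is essentially the paper's own argument: the paper's entire proof of Theorem \ref{thm:Moriyamaarccomplex} is the one-line remark that one strings together the preceding lemmas, exactly as you do. Your extra attention to matching the augmentations via the identification $F_n(\mathring{\M})\simeq B\PBr_n$ (through the homeomorphism $\Arc^h_p(F_n(\mathring{\M}))\cong(\Diff_0(\M,\del\M)\times\cA_n(\M)_p)/\hPBr_n$) makes explicit a compatibility the paper leaves implicit, and is the right point to flag.
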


\subsection{Moriyama's groups as hypertor groups}
In this subsection, we note that the groups $\Mor_n^*$ can be viewed as certain symmetric sequence valued hypertor groups. Recall that the category of symmetric sequences of chain complexes is endowed with a symmetric monoidal product called Day convolution. This product lets us make sense of monoid objects, modules over monoid objects, tensor products over monoid objects, Tor groups, algebras over operads, etc.

The sequence of chain complexes $\{\Ch_*(F_n(\R^2))\}_n$ has the structure of an $E_2$-algebra in symmetric sequences of chain complexes;
in particular, it is an $E_1$-algebra and it
is equivalent after rectification to a monoid object, which we denote by $\Ch_*(F(\R^2))$.
Picking an embedding $\R^n \cup \mathring{\M}  \hookrightarrow \mathring{\M}$
gives maps 
\[
F_n(\R^2) \times F_m(\mathring{\M}) \to F_{n+m}(\mathring{\M}).
\]
After rectification, these maps give rise to a $\Ch_*(F(\R^2))$-module $\Ch_*(F(\mathring{\M}))$, whose $n$\textsuperscript{th} component is equivalent to $\Ch_*(F_n(\mathring{\M}))$.

Regard $\Z$ as a symmetric sequence of chain complexes concentrated in homological degree $0$. It is a $\Ch_*(F(\R^2))$-module via the augmentation
$\Ch_*(F(\R^2))\to\Z$, which is non-trivial only on the $0$\textsuperscript{th} component, where it induces an isomorphism
$H_0(\Ch_*(F_0(\R^2))) \overset{\cong}{\to} \Z$. View $\Mor_n^*$ as a symmetric sequence of abelian groups by viewing all but the $n$\textsuperscript{th} component as $0$. After applying Poincar\'e-Lefschetz daulity, Proposition \ref{prop:main} can be rephrased as saying that $ \Ch_*(F(\mathring{\M}))$ has a resolution by free $\Ch_*(F(\R^2))$-modules with $n$\textsuperscript{th} syzygy given by $\Mor_n^*$ in homological degree $n$. This gives the following corollary.

\begin{cor}
 \label{cor:hypertor}
There is an isomorphism of symmetric sequences of abelian groups 
 \[
 \Tor^{\Ch_*(F(\R^2))}_n( \Ch_*(F(\mathring{\M})) ,\Z) \cong \Mor_n^*.
 \]
\end{cor}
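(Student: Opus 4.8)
The plan is to extract the needed resolution directly from Proposition \ref{prop:main} (read through Poincaré–Lefschetz duality) and then recognize the resulting complex as one computing hypertor over the monoid $\Ch_*(F(\R^2))$. First I would set up the $\Ch_*(F(\R^2))$-module structures carefully. The symmetric sequence $\Ch_*(F(\R^2)) = \{\Ch_*(F_n(\R^2))\}_n$ is an $E_2$-algebra in symmetric sequences of chain complexes via stacking configurations in the plane; after rectification it becomes a genuine monoid object, and $\Ch_*(F(\mathring{\M}))$ becomes a module via an embedding $\R^2 \sqcup \mathring{\M} \hookrightarrow \mathring{\M}$ placing the plane near the boundary disc $D$ of Section \ref{sec4}. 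The key point is that the cells $e_\ft$ with $\ft$ of the form $(\ell, \uP, (), ())$ supported in $\mathring D$ assemble, over varying $S$, into exactly the free $\Ch_*(F(\R^2))$-module structure: the subcomplex $\tCh^E_*(F_Q(\M)^\infty)$ is (up to the identification of $\mathring D$ with $\R^2$ and Poincaré–Lefschetz duality, cf.\ the discussion after Definition \ref{defn:et}) a shift of $\Ch_*(F_Q(\R^2))$. Thus the Day-convolution decomposition
\[
 \tCh_*^{\cell}(F_S(\M)^\infty)\;\cong\;\bigoplus_{S=Q\sqcup R} \tCh^E_*(F_Q(\M)^\infty)\otimes\Mor_R
\]
of Proposition \ref{prop:main} is precisely the statement that, dualizing, $\Ch_*(F(\mathring{\M}))$ receives a map from the free $\Ch_*(F(\R^2))$-module generated by the symmetric sequence $\{\Mor_R^*\}_R$, with $\Mor_R^*$ placed in the $|R|$-th spot and (after PL duality on a $2|R|$-manifold) in homological degree $|R|$.

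Next I would argue that this map is in fact a free resolution. Running over all $n$, the total complex $\bigoplus_n \tCh_*^{\cell}(F_n(\M)^\infty)$ is, by Proposition \ref{prop:main}, the free $\Ch_*(F(\R^2))$-module on $\{\Mor_n^*[n]\}_n$; and by Poincaré–Lefschetz duality together with the cellular-vs-singular comparison of Subsection \ref{subsec:naturalquasiiso} its homology is $H^*(F_n(\mathring{\M}))$, i.e.\ it is quasi-isomorphic to $\Ch_*(F(\mathring{\M}))$ (after dualizing once more; here one uses that the complexes involved are degreewise finitely generated free abelian, so linear duality is exact and commutes with homology). Hence $\Ch_*(F(\mathring{\M}))$ admits a free $\Ch_*(F(\R^2))$-module resolution whose underlying graded free module has $n$-th generating symmetric sequence $\Mor_n^*$ sitting in homological degree $n$ and internal arity $n$. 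By definition of hypertor as the homology of $P \otimes_{\Ch_*(F(\R^2))}^{\mathbb L} \Z$ for a free resolution $P \to \Ch_*(F(\mathring{\M}))$, and since $\Z$ is the augmentation module supported in arity $0$, tensoring the free module $\Ch_*(F(\R^2)) \otimes \{\Mor_n^*[n]\}$ over $\Ch_*(F(\R^2))$ with $\Z$ just returns $\{\Mor_n^*[n]\}$ with zero differential (the differential in the resolution lowers homological degree, but the $n$-th summand already sits in a single homological degree $n$, so all internal differentials vanish after base change). Therefore $\Tor^{\Ch_*(F(\R^2))}_n(\Ch_*(F(\mathring{\M})),\Z)$ is $\Mor_n^*$ in arity $n$ and $0$ elsewhere, which is the claim.

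The main obstacle I anticipate is entirely bookkeeping-theoretic rather than conceptual: one must make the passage between the strictly-defined cellular chain complexes of Section \ref{sec3} and the rectified $E_1$-algebra $\Ch_*(F(\R^2))$ and its module $\Ch_*(F(\mathring{\M}))$ genuinely rigorous. Concretely, the decomposition of Proposition \ref{prop:main} is a decomposition of graded abelian groups compatible with the cellular differentials; to call it a \emph{free module resolution} one needs the $\Ch_*(F(\R^2))$-action on the total complex to be the rectified one, which requires checking that the superposition maps $\mu_{Q,R}$ (Lemma \ref{lem:mucellular}) model the $E_2$-structure coherently up to the homotopies implicit in rectification. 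This is standard — it is exactly the kind of statement underlying the non-unital $E_k$-cell structures on configuration spaces in the literature — but writing it out cleanly is the one place where care is needed. Once that identification is in place, the argument is a formal consequence of the definition of hypertor together with the quasi-isomorphism $\tCh_*^{\cell}(F_n(\M)^\infty) \simeq$ (PL dual of $\Ch_*(F_n(\mathring{\M}))$) already assembled in Section \ref{sec5}.
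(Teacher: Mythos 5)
Your proposal follows essentially the same route as the paper: both read Proposition \ref{prop:main} through Poincar\'e--Lefschetz duality as exhibiting a free $\Ch_*(F(\R^2))$-module resolution of $\Ch_*(F(\mathring{\M}))$ with $n$\textsuperscript{th} syzygy $\Mor_n^*$ concentrated in arity and homological degree $n$, and then base-change to $\Z$ to read off the Tor groups. Your explicit remarks on why the differentials vanish after base change and on the rectification/coherence bookkeeping are exactly the points the paper leaves implicit, so the argument matches.
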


Hypertor groups of this form are often referred to as ``derived indecomposables''.

\subsection{Some questions}
We close the article with two questions.
\begin{question}
Starting with the Solomon--Tits theorem \cite{Solomon}, there is a long literature on generating sets for the top homology of spherical simplicial complexes (see e.g. Broaddus \cite[Theorem 1.2]{Broaddus} and \cite[Theorem 2.40]{MW1}). Can one describe a generating set for $\widetilde H_{n-1}(\mathcal A_n(\M)_\bullet )$? Does this shed light on the homology of configuration spaces?
\end{question}

\begin{question}
The complexes $\mathcal A_n(\M)_\bullet $ and $\Arc_\bullet(F_n(\mathring{\M}))$ have primarily been studied from the perspective of homological stability and representation stability \cite{hatcherwahl,RWW,MW1}. Is there a way of deducing Theorem \ref{thm:main} using stability?

For instance, representation stability for the sequence $\{H_1(F_n(\mathring{\M}))\}_n$, together with the fact that $\M$ has genus at least $1$,
implies that there is a surjection
\[
\Ind_{\fS_1 \times \fS_{n-1}}^{\fS_n}H_1(F_1(\mathring{\M})) \to H_1(F_n(\mathring{\M})).
\]
Since $\cJ(1)$ acts trivially on $H_1(F_1(\mathring{\M})) \cong H_1(\M)$, one can conclude
that $\cJ(1)$ acts trivially on $H_1(F_n(\mathring{\M}))$ as well.

It is also easy to see that $\cJ(i)$ acts trivially on $H_i(F_i(\M))$
for $i\ge0$, using that $H_i(F_i(\M))$ injects into $\Mor_i^*$. Can one show $\cJ(i)$ acts trivially on $H_i(F_n(\mathring{\M}))$ for all $n$ by constructing $H_i(F_n(\mathring{\M}))$ out of classes
in $H_a(F_b(\R^2))$ and in $H_c(F_c(\M))$ with a sensible set of operations?
This should be related to the fact that both Moriyama's space $\M^n/(\Delta_n(\M)\cup A_n(\M))$
and the arc complex $\cA_n(\M)_\bullet$ are spherical.
\end{question}

\bibliographystyle{amsalpha}
\bibliography{bibliography}

\providecommand{\bysame}{\leavevmode\hbox to3em{\hrulefill}\thinspace}
\providecommand{\MR}{\relax\ifhmode\unskip\space\fi MR }
% \MRhref is called by the amsart/book/proc definition of \MR.
\providecommand{\MRhref}[2]{%
  \href{http://www.ams.org/mathscinet-getitem?mr=#1}{#2}
}
\providecommand{\href}[2]{#2}
\begin{thebibliography}{RWW17}

\bibitem[Bia20]{Bianchi}
Andrea Bianchi, \emph{Splitting of the homology of the punctured mapping class
  group}, Journal of Topology \textbf{13} (2020), 1230--1260.

\bibitem[Bro12]{Broaddus}
Nathan Broaddus, \emph{Homology of the curve complex and the {S}teinberg module
  of the mapping class group}, Duke Math. J. \textbf{161} (2012), no.~10,
  1943--1969. \MR{2954621}

\bibitem[ES70]{EarleSchatz}
Clifford~J. Earle and Alfred Schatz, \emph{Teichm\"{u}ller theory for surfaces
  with boundary}, Journal of Diff. Geometry \textbf{4} (1970), 169--185.

\bibitem[FN62]{FoxNeuwirth}
Ralph Fox and Lee Neuwirth, \emph{The braid groups}, Mathematica Scandinavica
  \textbf{10} (1962), 119--126.

\bibitem[Fuc70]{Fuchs:CohomBraidModtwo}
Dmitry~B. Fuchs, \emph{Cohomologies of the braid group mod 2}, Functional
  Analysis and its Applications \textbf{4:2} (1970), 143--151.

\bibitem[Gra73]{Gramain}
Andr{\'e} Gramain, \emph{Le type d’homotopie du groupe des
  diff{\'e}omorphismes d'une surface compacte}, Annales Scientifiques de
  l'\'Ecole Normale Sup\'erieure. Quatri\`eme S\'erie \textbf{6} (1973),
  53--66.

\bibitem[HW10]{hatcherwahl}
Allen Hatcher and Nathalie Wahl, \emph{Stabilization for mapping class groups
  of 3-manifolds}, Duke Math. J. \textbf{155} (2010), no.~2, 205--269.
  \MR{2736166 (2012c:57001)}

\bibitem[Joh80]{Johnson80}
Dennis Johnson, \emph{An abelian quotient of the mapping class group
  $\mathcal{I}_g$}, Mathematische Annalen \textbf{249} (1980), no.~3, 225--242.

\bibitem[Joh85]{JohnsonII}
\bysame, \emph{The structure of the {T}orelli group {II}: {A} characterization
  of the group generated by twists on bounding curves}, Topology (1985), no.~2,
  113--126.

\bibitem[KM18]{KMcell}
Alexander Kupers and Jeremy Miller, \emph{{$E_n$}-cell attachments and a
  local-to-global principle for homological stability}, Math. Ann. \textbf{370}
  (2018), no.~1-2, 209--269. \MR{3747486}

\bibitem[Kra19]{Kra}
Manuel Krannich, \emph{Homological stability of topological moduli spaces},
  Geom. Topol. \textbf{23} (2019), no.~5, 2397--2474. \MR{4019896}

\bibitem[Loo20]{Looijenga}
Eduard Looijenga, \emph{Torelli group action on the configuration space of a
  surface}, \href{https://arxiv.org/abs/2008.10556}{arxiv:2008.10556}, 2020, to
  appear in Journal of Topology and Analysis.

\bibitem[Mor07]{Moriyama}
Tetsuhiro Moriyama, \emph{The mapping class group action on the homology of the
  configuration spaces of surfaces}, Journal of the London Mathematical Society
  \textbf{76} (2007), no.~2, 451--466.

\bibitem[MW19]{MW1}
Jeremy Miller and Jennifer C.~H. Wilson, \emph{Higher-order representation
  stability and ordered configuration spaces of manifolds}, Geom. Topol.
  \textbf{23} (2019), no.~5, 2519--2591. \MR{4019898}

\bibitem[RWW17]{RWW}
Oscar Randal-Williams and Nathalie Wahl, \emph{Homological stability for
  automorphism groups}, Adv. Math. \textbf{318} (2017), 534--626. \MR{3689750}

\bibitem[Sol69]{Solomon}
Louis Solomon, \emph{The {S}teinberg character of a finite group with
  {BN}--pair}, Theory of Finite Groups (Symposium, Harvard Univ., Cambridge,
  Mass., 1968), 1969, pp.~213--221.

\bibitem[Sta21]{Stavrou}
Andreas Stavrou, \emph{Cohomology of configuration spaces of surfaces as
  mapping class group representations},
  \href{https://arxiv.org/abs/2107.08462}{arxiv:2107.08462}, 2021.

\end{thebibliography}
\newpage
\end{document}